\theoremstyle{plain}
\newtheorem{theorem}{Theorem}[section]
\newtheorem{cor}[theorem]{Corollary}
\newtheorem{prop}[theorem]{Proposition}
\newtheorem{lemma}[theorem]{Lemma}
\theoremstyle{definition}
\newtheorem{mydef}[theorem]{Definition}
\newtheorem{example}[theorem]{Example}
\newtheorem{examples}[theorem]{Examples}
\newtheorem{remark}[theorem]{Remark}
\newcommand{\R}{\mathbb{R}}
\newcommand{\N}{\mathbb{N}}
\newcommand{\C}{\mathbb{C}}
\newcommand{\K}{\mathbb{K}}
\newcommand{\e}{\varepsilon}
\newcommand{\eps}{\varepsilon}
\DeclareMathOperator{\supp}{supp}
\DeclareMathOperator{\re}{Re}
\DeclareMathOperator{\NA}{NA}
\DeclareMathOperator{\Id}{Id}
\renewcommand{\leq}{\leqslant}
\renewcommand{\geq}{\geqslant}
\renewcommand{\le}{\leqslant}
\renewcommand{\geq}{\geqslant}
\title{The Bishop-Phelps-Bollob\'as property for compact operators}
\author[S.~Dantas]{Sheldon Dantas}
\address{Departamento de An\'{a}lisis Matem\'{a}tico,
Universidad de Valencia, Doctor Moliner 50, 46100 Burjasot
(Valencia), Spain} \email{sheldon.dantas@uv.es}
\author[D.~Garc\'ia]{Domingo Garc\'ia}
\address{Departamento de An\'{a}lisis Matem\'{a}tico,
Universidad de Valencia, Doctor Moliner 50, 46100 Burjasot
(Valencia), Spain} \email{domingo.garcia@uv.es}
\author[M.~Maestre]{Manuel Maestre}
\address{Departamento de An\'{a}lisis Matem\'{a}tico,
Universidad de Valencia, Doctor Moliner 50, 46100 Burjasot
(Valencia), Spain} \email{manuel.maestre@uv.es}
\author[M.~Mart\'in]{Miguel Mart\'{\i}n}
\address[Mart\'{\i}n]{Departamento de An\'{a}lisis Matem\'{a}tico, Facultad de
 Ciencias, Universidad de Granada, 18071 Granada, Spain.
\href{http://orcid.org/0000-0003-4502-798X}{ORCID: \texttt{0000-0003-4502-798X} }
 }
\email{mmartins@ugr.es}
\thanks{First author supported by MINECO
MTM2014-57838-C2-2-P and CAPES, Doutorado Pleno CSF, BEX 0050/13-0. Second and third
authors supported by MINECO MTM2014-57838-C2-2-P and Prometeo II/2013/013. Four author supported by Spanish MINECO and FEDER project MTM2015-65020-P, and by Junta de Andaluc\'{\i}a and FEDER grant FQM-185.}
\subjclass[2010]{Primary: 46B04;  Secondary: 46B20, 46B28, 46B25, 46E40}
\date{April 3, 2016}
\keywords{Bishop-Phelps theorem, Bishop-Phelps-Bollob\'as property, norm attaining operators, compact operators}
\dedicatory{}
\begin{document}

\begin{abstract}
We study the Bishop-Phelps-Bollob\'{a}s property (BPBp for short) for compact operators. We present some abstract techniques which allows to carry the BPBp for compact operators from sequence spaces to function spaces. As main applications, we prove the following results. Let $X$, $Y$ be Banach spaces. If $(c_0,Y)$ has the BPBp for compact operators, then so do $(C_0(L),Y)$ for every locally compact Hausdorff topological space $L$ and $(X,Y)$ whenever $X^*$ is isometrically isomorphic to $\ell_1$.
If $X^*$ has the Radon-Nikod\'{y}m property and $(\ell_1(X),Y)$ has the BPBp for compact operators, then so does $(L_1(\mu,X),Y)$ for every positive measure $\mu$; as a consequence, $(L_1(\mu,X),Y)$ has the the BPBp for compact operators when $X$ and $Y$ are finite-dimensional or $Y$ is a Hilbert space and $X=c_0$ or $X=L_p(\nu)$ for any positive measure $\nu$ and $1< p< \infty$.
For $1\leq p <\infty$, if $(X,\ell_p(Y))$ has the BPBp for compact operators, then so does $(X,L_p(\mu,Y))$ for every positive measure $\mu$ such that $L_1(\mu)$ is infinite-dimensional. If $(X,Y)$ has the BPBp for compact operators, then so do $(X,L_\infty(\mu,Y))$ for every $\sigma$-finite positive measure $\mu$ and $(X,C(K,Y))$ for every compact Hausdorff topological space $K$.
 \end{abstract}

\maketitle

\section{Introduction}
The study of norm-attaining operators goes back to J.~Lindenstrauss, who in 1963 \cite{L} initiated the study of pairs of Banach spaces $X$ and $Y$ for which the set of norm-attaining operators from $X$ into $Y$ is dense, trying to extend to operators the classical Bishop-Phelps theorem about the density of norm-attaining functionals. For a Banach space $X$ over $\K$ ($\R$ or $\C$), we write $S_X$, $B_X$, $X^*$ to denote, respectively, the unit sphere, the closed unit ball, and the topological dual of $X$. If $Y$ is also a Banach space, $\mathcal{L}(X,Y)$ is the space of all bounded linear operators from $X$ into $Y$ and $\mathcal{K}(X,Y)$ is its subspace consisting of all compact linear operators (recall that a linear operator $T:X\longrightarrow Y$ is said to be compact if $T(B_X)$ is relatively compact in $Y$). For $T\in \mathcal{L}(X,Y)$, $T^*$ will denote the adjoint of $T$. An operator $T\in \mathcal{L}(X,Y)$ is said to attain its norm if there is $x\in S_X$ such that $\|T\|=\|Tx\|$; we write $\NA(X,Y)$ to denote the set of all operators attaining their norms. With this notation, the Bishop-Phelps theorem \cite{BP} states that $\NA(X,\K)$ is dense in $X^*$ for every Banach space $X$. Lindenstrauss \cite{L} showed that there are Banach spaces $X$ and $Y$ such that $\NA(X,Y)$ is not dense in $\mathcal{L}(X,Y)$, and also gave some particular cases in which such a density holds: if $X$ is reflexive or if $Y$ is a closed subspace of $\ell_\infty$ containing the canonical copy of $c_0$ (actually, if $Y$ has property $\beta$, see definition below). A detailed account of known results in this area can be found in \cite{Acosta-RACSAM}.

Looking for norm-attaining operators, it is easier if we deal with compact operators as, for instance, compact operators from reflexive spaces always attain their norms (actually, James' theorem assures that this fact characterizes reflexivity). Many results about density of norm-attaining compact operators were given in the 1970's. For example, $\NA(X,Y)\cap \mathcal{K}(X,Y)$ is dense in $\mathcal{K}(X,Y)$ whenever one of the spaces $X$, $X^*$, $Y$ or $Y^*$ is isometrically isomorphic to a $L_1(\mu)$-space \cite{Johnson}. It was actually conjectured that compact operators between Banach spaces can be always approximated by norm-attaining (compact) operators, but it has been recently shown that this is not the case \cite{M}. We refer to the survey paper \cite{M1} for a detailed account on this subject.

B.~Bollob\'{a}s gave in 1970 \cite{Bol} a refinement of the Bishop-Phelps theorem in which both functionals and points where they almost attain the norm can be simultaneously approximated by norm-attaining functionals and points where they attain their norm.
In 2008, M.~D.~Acosta, R.~ M.~Aron, D.~Garc\'ia and M.~Maestre \cite{AAGM} introduced the Bishop-Phelps-Bollob\'{a}s property to study the operator version of Bollob\'{a}s' result.

\begin{mydef}[Bishop-Phelps-Bollob\'as property \cite{AAGM}]\label{def:BPBp} A pair of Banach spaces $(X, Y)$ has the \emph{Bishop-Phelps-Bollob\'as property} (\emph{BPBp} for short) if given $\e > 0$, there exists $\eta(\e) > 0$ such that whenever $T \in \mathcal{L}(X, Y)$ with $\|T\|=1$ and $x_0 \in S_X$ satisfy
	\begin{equation*}
	\|T(x_0)\| > 1 - \eta(\e),
	\end{equation*}
there are $S \in \mathcal{L}(X, Y)$ and $x_1 \in S_X$ such that
\begin{equation*}
\|S\|=\|S(x_0)\| = 1, \ \ \|x_0 - x_1\| < \e \ \ \mbox{and} \ \ \|S - T\| < \e.
\end{equation*}
In this case, we say that the pair $(X,Y)$ has the BPBp \emph{with the function} $\eps\longmapsto \eta(\eps)$.
\end{mydef}

With this definition, the refinement of Bollob\'{a}s \cite{Bol} of the Bishop-Phelps theorem just says that the pair $(X,\K)$ has the BPBp for every Banach space $X$.

There has been an extensive research on this topic, see \cite{Acosta-BJMA2016,8authors,ABGM-unif-convex,ABGKM-JMAA2014,AGKM-JMAA2016, ACKLM,CKG,ChoChoi,ChoiKim,ChoiKimLeeMartin, KL,KimLee-JMAA2015,KimLeeMartin-JMAA2015}, among others, where we refer for more information and background. Let us comment that there are many cases in which the density of norm-attaining operators between two Banach spaces $X$ and $Y$ carries to the fact that the pair $(X,Y)$ has the BPBp. For instance, among classical spaces, we have that the pairs $(L_p(\mu),L_q(\nu))$ have the BPBp whenever $\mu$ and $\nu$ are $\sigma$-finite measures and $1\leq p<\infty$ and $1\leq q\leq \infty$ \cite{ABGM-unif-convex,ChoiKim,ChoiKimLeeMartin,KL}, or $p=\infty$ and $1<q<\infty$ (actually, $(C(K),L_q(\mu))$ has the BPBp for every compact topological Hausdorff space $K$ \cite{KimLee-JMAA2015}); in the real case, the pair $(C_0(L_1),C_0(L_2))$ has the BPBp for every locally compact Hausdorff topological spaces $L_1$ and $L_2$ \cite{8authors}; in the complex case, the pair $(C(K),L_1(\mu))$ has the BPBp for every compact Hausdorff topological space $K$ and every measure $\mu$ \cite{Acosta-BJMA2016}. For general Banach spaces, we would like to present one result for domain spaces and one for range spaces. We need a couple of definitions. Let $Z$ be a Banach space. We say that $Z$ is \emph{uniformly convex} if for every sequences $\{x_n\}$, $\{y_n\}$ of elements of $B_Z$ with $\|x_n+y_n\|\longrightarrow 2$, one has $\|x_n-y_n\|\longrightarrow 0$; this is the case of the $L_p(\mu)$ spaces for $1<p<\infty$. The space $Z$ has property $\beta$ with constant $0\leq \rho<1$ if there are two sets $\{ z_i\,:\, i \in I\} \subset S_Z$, $\{ z^{*}_i\,:\, i \in I \} \subset S_{Z^*}$ such that
\begin{enumerate}
\item[$(i)$] $z^{*}_i(z_i)=1$, $\forall i \in I $;
\item[$(ii)$] $|z^*_i(z_j)|\leq \rho <1$ if $i, j \in I, i \ne j$;
\item[$(iii)$] for every $z\in Z$, $\|z\|=\sup\nolimits_{i \in I}  \bigl|z^{*}_i(z)\bigr|$ or, equivalently, $B_{Z^*}$ is the absolutely weakly$^*$-closed convex hull of $\{z^*_i\,:\, i\in I\}$.
\end{enumerate}
Examples of Banach spaces with property $\beta$ are those subspaces of $\ell_\infty$ containing the canonical copy of $c_0$ and finite-dimensional polyhedral spaces.

The two promised results about the BPBp are the following. Let $X$, $Y$ be Banach spaces.
\begin{itemize}
\item If $X$ is uniformly convex, then $(X,Z)$ has the BPBp for every Banach space $Z$ (\cite[Theorem~3.1]{KL} or \cite[Corollary~2.3]{ABGM-unif-convex}).
\item If $Y$ has property $\beta$, then $(Z,Y)$ has the BPBp for every Banach space $Z$ (\cite[Theorem~2.2]{AAGM}).
\end{itemize}

On the other hand, it was known from the seminal paper \cite{AAGM} that it is not always true that the density of $\NA(X,Y)$ implies the BPBp for the pair $(X,Y)$. There are some examples showing that, but maybe one of the more remarkable ones is the following.

\begin{example}[\mbox{\cite[Example~4.1]{ACKLM}}] \label{example-NAdense-no-BPBp}
{\slshape There exists a sequence of two-dimensional polyhedral spaces such that, writing $\mathcal{Y}$ to denote its $c_0$-sum, the pair $(\ell_1^2,\mathcal{Y})$ fails the BPBp. It is remarkable to say that, on the one hand, $\NA(\ell_1^2,Y)=\mathcal{L}(\ell_1^2,Y)$ for every Banach space $Y$ and, on the other hand, $\NA(X,\mathcal{Y})$ is dense in $\mathcal{L}(X,\mathcal{Y})$ for every Banach space $X$.}
\end{example}

This shows clearly that the study of the BPBp is not merely a trivial extension of the corresponding study of the density of norm-attaining operators, as there are more geometrical consequences. As an example, let us mention that the fact that a pair of the form $(\ell_1,Y)$ has the BPBp has been characterized in terms of the geometry of the space $Y$ by the following property.

\begin{mydef}[\mbox{\cite{AAGM}}] \label{defAHSP}
A Banach space $Y$ has the \emph{approximate hyperplane series property} (\emph{AHSP}, in short) if for every $\varepsilon>0$ there exists $\eta>0$ such that given a sequence $(y_k) \subset S_Y$ and a  convex series $\sum_{k=1}^{\infty}\alpha_k$ such that
$$
\left\|\sum_{k=1}^{\infty}\alpha_k y_k\right\| > 1-\eta,
$$
there exist $A\subset \N$, $y^*\in S_{Y^*}$, and a subset $\{z_k\, :\, k\in A\}\subset S_Y$ satisfying that
$$
\sum_{k\in A} \alpha_k>1-\varepsilon,\quad \|z_k-y_k\|<\varepsilon \quad \text{and} \quad y^*(z_k)=1
$$
for every $k\in A$.
\end{mydef}

It is shown in \cite[Theorem~4.1]{AAGM} that a Banach space $Y$ has the AHSP if and only if the pair $(\ell_1,Y)$ has the BPBp. Examples of spaces with this property are $L_1(\mu)$-spaces, $C(K)$-spaces, finite-dimensional spaces, uniformly convex spaces, among others.

In this paper we study Bishop-Phelps-Bollob\'{a}s type theorems for compact operators. Let us introduce the following definition, which has already appeared (mostly without name) in some of the references cited before.

\begin{mydef}[Bishop-Phelps-Bollob\'as property for compact operators]\label{def-BPBp-K} We say that a pair of Banach spaces $(X, Y)$ has the \emph{Bishop-Phelps-Bollob\'as property for compact operators} (\emph{BPBp for compact operators}) if given $\e > 0$, there exists $\eta(\e) > 0$ such that whenever $T \in \mathcal{K}(X, Y)$ with $\|T\|=1$ and $x_0 \in S_X$ satisfy
	\begin{equation*}
	\|T(x_0)\| > 1 - \eta(\e),
	\end{equation*}
there are $S \in \mathcal{K}(X, Y)$ and $x_1 \in S_X$ such that
\begin{equation*}
\|S\|=\|S(x_0)\| = 1, \ \ \|x_0 - x_1\| < \e \ \ \mbox{and} \ \ \|S - T\| < \e.
\end{equation*}
In this case, we say that the pair $(X,Y)$ has the BPBp for compact operators \emph{with the function} $\eps\longmapsto \eta(\eps)$.
\end{mydef}

An extensive list of pairs of spaces having the BPBp for compact operators is given in the following list. Some of the results are stated as here in the corresponding reference; for other ones, the proof can be easily adapted to the compact operators case, taking into account that when one start with a compact operator there, all operators involved are also compact.

\begin{examples}\label{example-previouslyknown}
The pair $(X, Y)$ of Banach spaces has the BPBp for compact operators when
\begin{itemize}
\item[(a)] $X$ is arbitrary and $Y$ has property $\beta$  (adapting the proof of \cite[Theorem~2.2]{AAGM});
\item[(b)] $X$ is uniformly convex and $Y$ is arbitrary (using \cite[Corollary~2.3]{ABGM-unif-convex} or adapting the proof of \cite[Theorem~3.1]{KL});
\item[(c)] $X$ is arbitrary and $Y$ is a uniform algebra - in particular, $Y=C_0(L)$ for a locally compact Hausdorff topological space $L$ - \cite[R2 in page 380]{CKG};
\item[(d)] $X = L_1(\mu)$ and $Y = L_1(\nu)$ for arbitrary measures $\mu$ and $\nu$ (adapting the proof of \cite[Theorem~3.1]{ChoiKimLeeMartin});
\item[(e)] $X = L_1(\mu)$ and $Y = L_{\infty} (\nu)$ for any measure $\mu$ and any localizable measure $\nu$ (adapting the proof of \cite[Theorem~4.1]{ChoiKimLeeMartin});
\item[(f)] $X = C(K)$ and $Y = C(L)$ in the real case where $K$ and $L$ are compact Hausdorff topological spaces (adapting the proof of \cite[Theorem 2.5]{8authors});
\item[(g)] $X = C_0(L)$ and $Y$ is uniformly convex where $L$ is any locally compact Hausdorff topological space \cite[Theorem~3.3]{8authors};
\item[(h)] $X$ is arbitrary and $Y^*$ is isometrically isomorphic to a $L_1(\mu)$-space \cite[Theorem~4.2]{8authors}; in particular, if $Y=C_0(L)$ for a locally compact Hausdorff topological space $L$;
\item[(i)] $X=L_1(\mu)$ for an arbitrary measure and $Y$ having the AHSP \cite[Corollary 2.4]{ABGKM-JMAA2014}.
\end{itemize}
\end{examples}

Let us mention that {\slshape it is not true that the BPBp for compact operators implies the BPBp for operators.}\, Indeed, the pair $(L_1[0,1],C[0,1])$ has the BPBp for compact operators (by any of the assertions (c), (h) or (i) above). But, $\NA(L_1[0,1],C[0,1])$ is not dense in $\mathcal{L}(L_1[0,1],C[0,1])$ \cite{Schachermayer}, nor the more this pair has the BPBp. On the other hand, {\slshape we do not know whether the BPBp implies the BPBp for compact operators.}

Our aim in this paper is to present some techniques to produce pairs of Banach spaces having the BPBp for compact operators, and to give some applications of them. These techniques are based on two old results about norm-attaining compact operators by J.~Johnson and J.~Wolfe \cite{Johnson}. To give these techniques is the goal of section~\ref{sec:tools}. We next present in
section~\ref{sect:applications} some applications of the previous results which carry the BPBp for compact operators from some sequence spaces to function spaces.  Let us recall some useful notation. Let $Z$ be a Banach space, $m\in\N$ and $1\leq p \leq \infty$. By $\ell_p^m(Z)$ we denote the $\ell_p$-sum of $m$ copies of $Z$ and $\ell_p(Z)$ is the $\ell_p$-sum of a countable infinitely many copies of $Z$; $c_0(Z)$ denotes the $c_0$-sum of a countable infinitely many copies of $Z$. If $(\Omega,\Sigma,\mu)$ is a positive measure space, $L_p(\mu,Z)$ is the space of all strongly measurable functions $f:\Omega \longrightarrow A$ such that $\|f\|^p$ is integrable for $1\leq p <\infty$ or $f$ is essentially bounded for $p=\infty$, endowed with the natural corresponding $p$-norm.

The main applications given in section~\ref{sect:applications} are the following. Let $X$, $Y$ be Banach spaces.
\begin{itemize}
\item If $(c_0,Y)$ has the BPBp for compact operators, then so does $(C_0(L),Y)$ for every locally compact Hausdorff topological space $L$.
\item If $X^*$ is isometrically isomorphic to $\ell_1$ and $(c_0,Y)$ has the BPBp for compact operators, then so does $(X,Y)$; in particular, if $Y$ is uniformly convex, then $(X,Y)$ has the BPBp for compact operators.
\item If $(\ell_1(X),Y)$ has the BPBp for compact operators and $X^*$ has the Radon-Nikod\'{y}m property, then $(L_1(\mu,X),Y)$ has the BPBp for compact operators for every positive measure $\mu$; as a consequence, $(L_1(\mu,X),Y)$ has the BPBp for compact operators when:
    \begin{itemize}
    \item $X$ and $Y$ are finite-dimensional,
    \item $Y$ is a Hilbert space and $X=c_0$ or $X=L_p(\nu)$ for any positive measure $\nu$ and $1< p < \infty$.
    \end{itemize}
\item For $1\leq p <\infty$, if $(X,\ell_p(Y))$ has the BPBp for compact operators, then so does $(X,L_p(\mu,Y))$ for every positive measure $\mu$ such that $L_1(\mu)$ is infinite-dimensional.
\item If $(X,Y)$ has the BPBp for compact operators, then so does $(X,L_\infty(\mu,Y))$ for every $\sigma$-finite positive measure $\mu$.
\item If $(X,Y)$ has the BPBp for compact operators, then so does $(X,C(K,Y))$ for every compact Hausdorff topological space $K$.
\end{itemize}

To finish this introduction, we would like to mention that a routinely change of parameters in Definitions \ref{def:BPBp} and \ref{def-BPBp-K} allows us to show that we may require the conditions not only for norm-one operators and vectors, but for operators and vectors with norm less than or equal to one. Here is the concrete statement of this result, which we will use without explicit mention.

\begin{remark}\label{BPBpLemma} Let $X$, $Y$ be Banach spaces.
\begin{enumerate}
\item[(a)] $(X,Y)$ has the BPBp if given $\e > 0$, there exists $\eta(\e) > 0$ such that whenever $T \in \mathcal{L}(X, Y)$ with $\|T\|\leq 1$ and $x_0 \in B_X$ satisfy
	\begin{equation*}
	\|T(x_0)\| > 1 - \eta(\e),
	\end{equation*}
there are $S \in \mathcal{L}(X, Y)$ and $x_1 \in S_X$ such that
\begin{equation*}
\|S\|=\|S(x_0)\| = 1, \ \ \|x_0 - x_1\| < \e \ \ \mbox{and} \ \ \|S - T\| < \e.
\end{equation*}
\item[(b)] $(X,Y)$ has the BPBp for compact operators if given $\e > 0$, there exists $\eta(\e) > 0$ such that whenever $T \in \mathcal{K}(X, Y)$ with $\|T\|\leq 1$ and $x_0 \in B_X$ satisfy
	\begin{equation*}
	\|T(x_0)\| > 1 - \eta(\e),
	\end{equation*}
there are $S \in \mathcal{K}(X, Y)$ and $x_1 \in S_X$ such that
\begin{equation*}
\|S\|=\|S(x_0)\| = 1, \ \ \|x_0 - x_1\| < \e \ \ \mbox{and} \ \ \|S - T\| < \e.
\end{equation*}
\end{enumerate}
\end{remark}

\section{The tools}\label{sec:tools}
In this section we present some abstract results which will allow to transfer the BPBp for compact operators from sequence spaces to function spaces. We first deal with domain spaces, for which the results are based in \cite[Lemma~3.1]{Johnson}: if a Banach space $X$ admits a net of norm-one projections with finite rank whose adjoints pointwise converge to the identity in norm, then $\NA(X,Y)\cap \mathcal{K}(X,Y)$ is dense in $\mathcal{K}(X,Y)$ for every Banach space $Y$. For the BPBp this result is not valid, as the finite-dimensionality of the domain space does not guarantee the BPBp (see Example~\ref{example-NAdense-no-BPBp}), and we have to impose additional conditions.

The most general result that we have is the following, from which we will deduce some particular cases.

\begin{lemma}[Main technical lemma]\label{Main-prop} Let $X$ and $Y$ be Banach spaces. Suppose that there exists a function $\eta:\R^+\longrightarrow \R^+$ such that given $\delta\in \R^+$, $x_1^*,\ldots,x_n^*\in B_{X^*}$ and $x_0\in S_X$, we may find a norm-one operator $P\in \mathcal{L}(X,X)$ and a norm-one operator $i\in\mathcal{L}(P(X),X)$ such that
\begin{enumerate}
\item[(1)] $\|P^*x_j^* - x_j^*\|<\delta$ for $j=1,\ldots,n$;
\item[(2)] $\|i(Px_0) - x_0\|<\delta$;
\item[(3)] $P\circ i=\Id_{P(X)}$;
\item[(4)] the pair $(P(X),Y)$ has the BPBp for compact operators with the function $\eta$.
\end{enumerate}
Then, the pair $(X, Y)$ has the BPBp for compact operators.
\end{lemma}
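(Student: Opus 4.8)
The plan is to transfer an almost-optimal pair for $(X,Y)$ down to the subspace $M:=P(X)$, apply the BPBp for compact operators there, and lift the solution back to $X$. Fix $\e>0$; we must exhibit $\eta_X(\e)>0$. Suppose $T\in\mathcal{K}(X,Y)$ with $\|T\|=1$ and $x_0\in S_X$ satisfy $\|Tx_0\|>1-\eta_X(\e)$. The first step is to linearize the compactness: since $T$ is compact, so is $T^*$ by Schauder's theorem, hence $T^*(B_{Y^*})$ is relatively compact and admits a finite $\delta$-net $x_1^*,\dots,x_n^*\in B_{X^*}$. Feeding $\delta$, these functionals, and $x_0$ into the hypothesis yields $P\in\mathcal{L}(X,X)$ and $i\in\mathcal{L}(P(X),X)$ with (1)--(4). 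A three-term estimate then gives $\|P^*T^*y^*-T^*y^*\|<3\delta$ for every $y^*\in B_{Y^*}$, using (1), the net property, and $\|P^*\|=1$; taking the supremum and dualizing yields the crucial bound $\|TP-T\|\le 3\delta$.

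Next I would choose the right operator to push onto $M$. Write $\pi\colon X\to M$ for $P$ regarded as a surjection onto its range $M=P(X)$ and $j\colon M\hookrightarrow X$ for the inclusion, so that $P=j\pi$ and, by (3), $\pi i=\Id_{M}$. The operator I push down is the \emph{restriction} $\widetilde T:=T\circ j=T|_{M}$, not $T\circ i$; this is the choice that will let (1) interact with $T$. It is compact with $\|\widetilde T\|\le 1$, and the point $z_0:=Px_0=\pi x_0\in B_{M}$ satisfies $\|\widetilde T z_0\|=\|TPx_0\|\ge\|Tx_0\|-3\delta>1-\eta_X(\e)-3\delta$. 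Provided $\eta_X(\e)$ and $\delta$ are chosen with $\eta_X(\e)+3\delta\le\eta(\e/2)$, hypothesis (4) (via Remark~\ref{BPBpLemma}(b), which allows $\|\widetilde T\|\le 1$ and $z_0\in B_M$) supplies $\widetilde S\in\mathcal{K}(M,Y)$ and $z_1\in S_{M}$ with $\|\widetilde S\|=\|\widetilde S z_1\|=1$, $\|z_1-z_0\|<\e/2$, and $\|\widetilde S-\widetilde T\|<\e/2$.

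Finally I would lift. Set $S:=\widetilde S\circ\pi\in\mathcal{K}(X,Y)$ and $x_1:=i(z_1)$. Since $\pi i=\Id_{M}$, we get $\pi x_1=z_1$ and hence $Sx_1=\widetilde S z_1$ has norm $1$, while $\|S\|\le\|\widetilde S\|\,\|\pi\|\le 1$ and $\|x_1\|=\|iz_1\|\le 1$; the chain $1=\|Sx_1\|\le\|S\|\,\|x_1\|\le 1$ then forces $\|S\|=1$ and $x_1\in S_X$, so $S$ attains its norm at $x_1$. For the operator estimate, decompose $S-T=(\widetilde S-\widetilde T)\pi+(TP-T)$, using $\widetilde T\pi=Tj\pi=TP$, to obtain $\|S-T\|<\e/2+3\delta$; for the points, $\|x_1-x_0\|\le\|i(z_1-z_0)\|+\|i(Px_0)-x_0\|<\e/2+\delta$ by (2). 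Choosing $\delta<\e/6$ small enough that $\eta_X(\e):=\eta(\e/2)-3\delta>0$ makes both quantities $<\e$, which is what we wanted.

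The main obstacle is reconciling the two approximation directions, and this is exactly what the four hypotheses are engineered to do. To bound $\|S-T\|$ one must compare $T$ with the lift of an operator living on $M$, and this only works if the operator pushed down is the restriction $T|_{M}$, so that $T|_{M}\circ\pi=TP$ is forced close to $T$ by (1) together with the finite net coming from compactness of $T^*$. To bound $\|x_1-x_0\|$, however, one must lift the norm-attaining point $z_1$ through $i$ (using $\pi i=\Id_{M}$ from (3)), and $i$ need not coincide with the inclusion $j$. Keeping these two different pullbacks straight---via $\pi$ for the operator and via $i$ for the point---and seeing that (2) bridges the gap between $i$ and $j$ at the vector $x_0$, is the delicate bookkeeping; compactness of $T$ enters only once, precisely to replace the continuum of functionals $T^*y^*$ by the finite list that hypothesis (1) is allowed to control.
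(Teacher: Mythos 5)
Your argument is correct and follows essentially the same route as the paper's proof: a finite net for $T^*(B_{Y^*})$ combined with hypothesis (1) gives $\|TP-T\|\le 3\delta$, the restriction $T|_{P(X)}$ is handled by hypothesis (4), and the solution is lifted via $S=\widetilde S P$ and $x_1=i(\widetilde{x_1})$, with the same bookkeeping for the two estimates. The only (harmless) difference is notational: you factor $P=j\pi$ explicitly, whereas the paper works directly with $P$ and the restriction.
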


\begin{proof} Let $\e > 0$ be given. Define
$$
\eta'(\eps)=\min\left\{\frac14 \eta\bigl(\eps/2\bigr),\,\eps/6 \right\}.
$$
Fix $T \in \mathcal{K}(X, Y)$ with $\|T\|=1$ and $x_0\in S_X$ such that $\|T x_0\|> 1 - \eta'(\eps)$. As $T^*(B_{Y^*})$ is compact, we may find $x_1^*,\ldots,x_n^*\in B_{X^*}$ such that
$$
\min_j \|T^*y^* - x_j^*\|<\eta'(\eps) \qquad (y^*\in B_{Y^*}).
$$
Let $P\in \mathcal{L}(X,X)$ and $i\in \mathcal{L}(P(X),X)$ satisfying (1)--(4) for $\delta=\eta'(\eps)$. Then, for every $y^*\in B_{Y^*}$, we have
\begin{align*}
\|T^* y^* -P^*T^*y^*\| & \leq \min_{j} \bigl( \|T^*y^* - x_j^*\| + \|x_j^* - P^*x_j^*\| + \|P^*x_j^* - P^*T^*y^*\|\bigr) \\ & < 3 \eta'(\eps).
\end{align*}
Therefore,
$$
\|T - TP\|= \|T^*-P^*T^*\|\leq  3 \eta'(\eps).
$$
Next, consider $\widetilde{T}=T|_{P(X)}\in \mathcal{K}(P(X),Y)$. Then, $\|\widetilde{T}\|\leq 1$ and
\begin{align*}
\|\widetilde{T}(Px_0)\| &\geq \|T x_0\| - \|T x_0 - TPx_0\| \\ & \geq \|T x_0\| - \|T-TP\|>1 - \eta'(\eps) - 3 \eta'(\eps)\geq 1 - \eta\bigl(\eps/2\bigr).
\end{align*}
As the pair $(P(X),Y)$ has the BPBp for compact operators with the function $\eta$, there exist $\widetilde{S}\in \mathcal{K}(P(X),Y)$ and $\widetilde{x_1}\in S_{P(X)}$ such that
$$
\|\widetilde{S}\|=1=\|\widetilde{S}(\widetilde{x_1})\|,\quad \|Px_0 - \widetilde{x_1}\|<\eps/2,\quad \|\widetilde{S} - \widetilde{T}\|<\eps/2.
$$
Finally, consider $S=\widetilde{S} P\in \mathcal{K}(X,Y)$ which satisfies $\|S\|\leq 1$ and consider $x_1=i(\widetilde{x_1})\in B_X$. First,
$$
\|S x_1\|=\|[\widetilde{S} P  i](\widetilde{x_1})\|=\|\widetilde{S}(\widetilde{x_1})\|=1,
$$
so $\|S\|=1=\|S x_1\|$ (in particular, $\|x_1\|=1$). Next,
\begin{align*}
\|x_1 - x_0\| &\leq \|i(\widetilde{x_1}) - i(P x_0)\| + \|i(P x_0) - x_0\| \\ &< \|\widetilde{x_1} - P x_0\| + \eta'(\eps)  < \eps/2 + \eps/6 < \eps.
\end{align*}
On the other hand,
\begin{align*}
\|S-T\| &\leq \|S - TP\| + \|TP - T\| \leq \|\widetilde{S}P - TP\| + 3\eta'(\eps) \\ &\leq \|\widetilde{S}-\widetilde{T}\| + 3\eta'(\eps) < \eps/2 + \eps/2 = \eps.\qedhere
\end{align*}
\end{proof}

A useful particular case of the above result is the following proposition.

\begin{prop}\label{Prop-family-projections} Let $X$ be Banach space for which there exists a net $\{P_{\alpha}\}_{\alpha \in \Lambda}$ of rank-one projections on $X$ such that $\{P_{\alpha}x\}  \longrightarrow x$ for all $x \in X$ and $\{P_{\alpha}^* x^*\} \longrightarrow x^*$ for all $x^* \in X^*$ in norm. If for a Banach space $Y$ there exists a function $\eta:\R^+\longrightarrow \R^+$ such that all the pairs $(P_{\alpha}(X), Y)$ with $\alpha\in \Lambda$ have the BPBp for compact operators with the function $\eta$, then the pair $(X,Y)$ has the BPBp for compact operators.
\end{prop}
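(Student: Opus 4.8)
The plan is to deduce this proposition directly from the Main technical lemma (Lemma~\ref{Main-prop}), using the net of projections to manufacture, for each prescribed data, the operators $P$ and $i$ required there. The candidate for $P$ will be a single projection $P_\alpha$ from the net, and the candidate for $i$ will be the natural (isometric) inclusion $P_\alpha(X)\hookrightarrow X$. Since the function $\eta$ witnessing the BPBp for compact operators is the \emph{same} for all the pairs $(P_\alpha(X),Y)$, it is exactly this common $\eta$ that we feed into Lemma~\ref{Main-prop}, and the whole task reduces to checking hypotheses (1)--(4) for an appropriately chosen index $\alpha$.

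So, fix the common function $\eta$ from the hypothesis, and let $\delta\in\R^+$, $x_1^*,\dots,x_n^*\in B_{X^*}$ and $x_0\in S_X$ be given as in Lemma~\ref{Main-prop}. The one genuine step is to select a single index $\alpha\in\Lambda$ that simultaneously controls all the finitely many data: by the pointwise norm convergence $P_\alpha^* x_j^*\to x_j^*$ there is, for each $j$, an index beyond which $\|P_\alpha^* x_j^* - x_j^*\|<\delta$, and by $P_\alpha x_0\to x_0$ there is an index beyond which $\|P_\alpha x_0 - x_0\|<\delta$. Because $\Lambda$ is directed, we may choose a single $\alpha$ dominating all these indices, and then all $n+1$ estimates hold for this one $\alpha$.

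With this $\alpha$ fixed, put $P=P_\alpha$ (a norm-one projection, so $\|P\|=1$) and let $i\colon P_\alpha(X)\longrightarrow X$ be the inclusion map, which is norm-one since it is a restriction of $\Id_X$. I would then verify the four conditions in turn: (1) is exactly the estimate $\|P^* x_j^* - x_j^*\|<\delta$ obtained above; (2) holds because $i(Px_0)=P_\alpha x_0$ as an element of $X$, whence $\|i(Px_0)-x_0\|=\|P_\alpha x_0 - x_0\|<\delta$; (3) is the idempotency of $P_\alpha$, since every $z\in P_\alpha(X)$ satisfies $P_\alpha i(z)=P_\alpha z = z$, i.e. $P\circ i=\Id_{P(X)}$; and (4) is precisely the hypothesis that $(P_\alpha(X),Y)$ enjoys the BPBp for compact operators with the function $\eta$. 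Having verified (1)--(4) with the single function $\eta$, Lemma~\ref{Main-prop} applies and yields the BPBp for compact operators for the pair $(X,Y)$.

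I do not expect a serious obstacle here: the proposition is essentially a repackaging of Lemma~\ref{Main-prop}, and the only point requiring care is the simultaneous choice of $\alpha$ via directedness, together with the observation that the \emph{same} function $\eta$ serves as witness for every $P_\alpha(X)$, so that the uniformity demanded in the hypothesis of Lemma~\ref{Main-prop} is met. The remaining ingredients---that the inclusion realizes $i$, and that $P_\alpha$ is contractive and idempotent---are immediate.
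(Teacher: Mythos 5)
Your proposal is correct and coincides with the paper's own (one-line) proof: the authors also deduce the proposition from Lemma~\ref{Main-prop} by taking $P=P_\alpha$ for a suitable index and $i$ the inclusion $P_\alpha(X)\hookrightarrow X$; you have merely written out the routine verification of hypotheses (1)--(4) and the choice of $\alpha$ via directedness. (The only cosmetic point is that the statement says ``rank-one'' where it plainly intends ``norm-one'' projections, which is what you -- correctly -- use.)
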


The proof is just an application of Lemma~\ref{Main-prop}, where the operator $i:P(X)\longrightarrow X$ is the  inclusion.

The requirement for the space $X$ in the above proposition are fulfilled if $X$ has a shrinking monotone Schauder basis (i.e.\ a monotone Schauder basis such that the biorthogonal functionals are a basis of the dual of the space).

\begin{cor}\label{corollary-monotoneshrinkingbasis}
Let $X$ be a Banach space with a shrinking monotone Schauder basis and let $\{P_n\}_{n\in \N}$ be the sequence of natural projections associated to the basis. If for a Banach space $Y$ there exists a function $\eta:\R^+\longrightarrow \R^+$ such that all the pairs $(P_n(X), Y)$ with $n\in \N$ have the BPBp (for compact operators) with the function $\eta$, then the pair $(X,Y)$ has the BPBp for compact operators.
\end{cor}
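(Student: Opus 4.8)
The plan is to read the statement as the special case of Proposition~\ref{Prop-family-projections} in which the net $\{P_{\alpha}\}_{\alpha\in\Lambda}$ is the sequence $\{P_n\}_{n\in\N}$ of natural projections, viewed as a net indexed by $\N$ with its usual order. Thus the whole task reduces to checking that this sequence satisfies the hypotheses on $X$ demanded by the proposition and that the hypothesis on $Y$ transcribes verbatim; the conclusion is then immediate. (Equivalently, one could feed the $P_n$ and the inclusions $i\colon P_n(X)\hookrightarrow X$ directly into Lemma~\ref{Main-prop}, which is the ultimate source of the proposition.)

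First I would verify the three conditions on $\{P_n\}$. Monotonicity of the basis is exactly the assertion that $\|P_n\|=1$ for every $n$, so each $P_n$ is a norm-one projection. The defining property of a Schauder basis is that every $x\in X$ has a norm-convergent expansion $x=\sum_k e_k^*(x)\,e_k$ whose partial sums are precisely $P_n x$; hence $P_n x\longrightarrow x$ in norm for all $x\in X$. For the dual convergence I would compute $P_n^*x^*=\sum_{k=1}^{n}x^*(e_k)\,e_k^*$, i.e.\ $P_n^*x^*$ is the $n$-th partial sum of $x^*$ with respect to the biorthogonal system $\{e_k^*\}$. Saying that the basis is shrinking is exactly saying that $\{e_k^*\}$ is a Schauder basis of $X^*$, which is equivalent to $\|x^*-P_n^*x^*\|\longrightarrow 0$ for every $x^*\in X^*$. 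This is the only genuinely analytic input, and the step I would be most careful to phrase correctly.

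Finally I would observe that, since $P_n(X)=\spann\{e_1,\ldots,e_n\}$ is finite-dimensional, every bounded operator from $P_n(X)$ into $Y$ has finite-dimensional range and so maps the (relatively compact) unit ball of $P_n(X)$ onto a relatively compact set; hence $\mathcal{K}(P_n(X),Y)=\mathcal{L}(P_n(X),Y)$, and the notions ``BPBp'' and ``BPBp for compact operators'' coincide for each pair $(P_n(X),Y)$. This justifies the parenthetical in the statement: the hypothesis supplies a single function $\eta$ with which all the pairs $(P_n(X),Y)$ have the BPBp for compact operators, which is precisely what Proposition~\ref{Prop-family-projections} requires of the range space. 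Applying that proposition then gives that $(X,Y)$ has the BPBp for compact operators. I do not expect a serious obstacle; the only point needing attention is that $\eta$ be uniform in $n$, and this is guaranteed by hypothesis.
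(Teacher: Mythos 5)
Your proof is correct and takes essentially the same route as the paper, which derives this corollary directly from Proposition~\ref{Prop-family-projections} by observing that the natural projections of a shrinking monotone Schauder basis satisfy its hypotheses (norm-one by monotonicity, $P_n x\to x$ by the basis expansion, $P_n^*x^*\to x^*$ by shrinkingness), exactly the three checks you carry out. The only cosmetic remark is that the proposition's statement reads ``rank-one projections'' where ``norm-one projections'' is clearly intended (the $P_n$ have rank $n$); this is a slip in the paper and does not affect your argument.
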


Another particular case of Proposition~\ref{Prop-family-projections} is given by the following corollary.

\begin{cor} \label{corollary-ordered-projections}
Let $X$ be a Banach space. Let $\{P_{\alpha}\}_{\alpha\in \Lambda}$ be a net of norm-one projections on $X$ such that $\alpha \preceq \beta$ implies that $P_{\alpha} (X) \subset P_{\beta} (X)$ and that $\{P_{\alpha}^* x^*\} \longrightarrow x^*$ in norm for all $x^* \in X^*$. If for a Banach space $Y$ there exists a function $\eta:\R^+\longrightarrow \R^+$ such that all the pairs $(P_{\alpha}(X), Y)$ with $\alpha\in \Lambda$ have the BPBp for compact operators with the function $\eta$, then the pair $(X,Y)$ has the BPBp for compact operators.
\end{cor}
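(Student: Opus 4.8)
The plan is to deduce this from the Main technical lemma (Lemma~\ref{Main-prop}). Given $\delta \in \R^+$, functionals $x_1^*,\dots,x_n^* \in B_{X^*}$ and $x_0 \in S_X$, I will produce the required operators by taking $P = P_\alpha$ for a suitable index $\alpha$ and letting $i\colon P_\alpha(X)\hookrightarrow X$ be the canonical inclusion. With these choices several of the hypotheses of Lemma~\ref{Main-prop} are immediate: $\|P_\alpha\|=1$ by assumption, $\|i\|=1$ since $P_\alpha(X)$ carries the norm inherited from $X$, condition~(3) holds because $P_\alpha i = P_\alpha|_{P_\alpha(X)} = \Id_{P_\alpha(X)}$ ($P_\alpha$ being a projection onto $P_\alpha(X)$), and condition~(4) is exactly the standing assumption that every $(P_\alpha(X),Y)$ has the BPBp for compact operators with the common function $\eta$. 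Condition~(1), that $\|P_\alpha^* x_j^* - x_j^*\|<\delta$, is guaranteed for all large $\alpha$ by the hypothesis $P_\alpha^* x^* \to x^*$ in norm together with the directedness of $\Lambda$ (choose a threshold for each of the finitely many $x_j^*$ and then an upper bound).

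The only genuine difficulty is condition~(2), namely $\|i(P_\alpha x_0) - x_0\| = \|P_\alpha x_0 - x_0\| < \delta$, because we are \emph{not} given that $\{P_\alpha x\}$ converges to $x$ in norm---only the adjoints are assumed to converge. The key observation is that the nestedness of the ranges forces the relation $P_\beta P_\alpha = P_\alpha$ whenever $\alpha \preceq \beta$: indeed $P_\alpha x \in P_\alpha(X)\subset P_\beta(X)$ and $P_\beta$ acts as the identity on $P_\beta(X)$. The adjoint convergence also shows that $\bigcup_{\alpha}P_\alpha(X)$ is dense in $X$: if some $x^* \in S_{X^*}$ vanished on every $P_\alpha(X)$, then $P_\alpha^* x^* = 0$ for all $\alpha$, contradicting $\|P_\alpha^* x^*\|\to \|x^*\|=1$.

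From these two facts the norm convergence $P_\alpha x_0 \to x_0$ follows by a routine three-term estimate: given $\delta$, density provides some $\alpha_0$ and $z \in P_{\alpha_0}(X)$ with $\|x_0 - z\|<\delta/2$; for every $\alpha \succeq \alpha_0$ one has $P_\alpha z = P_\alpha P_{\alpha_0} z = P_{\alpha_0} z = z$, whence $\|P_\alpha x_0 - x_0\| \leq \|P_\alpha\|\,\|x_0 - z\| + \|z - x_0\| < \delta$. Thus condition~(2) holds for all $\alpha$ large enough. Finally, using the directedness of $\Lambda$ once more, I fix a single $\alpha$ dominating simultaneously the thresholds coming from (1) and from (2); then $P=P_\alpha$ together with the inclusion $i$ satisfy (1)--(4), and Lemma~\ref{Main-prop} yields that $(X,Y)$ has the BPBp for compact operators. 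The main point to get right is therefore the passage from convergence of the adjoints to norm convergence of the $P_\alpha$ themselves, which is precisely where the nestedness hypothesis $P_\alpha(X)\subset P_\beta(X)$ enters.
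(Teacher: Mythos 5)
Your proof is correct and follows essentially the same route as the paper: the paper reduces to Proposition~\ref{Prop-family-projections} (itself an application of Lemma~\ref{Main-prop} with $i$ the inclusion) by proving exactly your two facts, namely that the adjoint convergence forces $\overline{\bigcup_\alpha P_\alpha(X)}=X$ via a Hahn--Banach argument, and that nestedness then upgrades this to pointwise norm convergence $P_\alpha x\to x$. The only difference is that you spell out the three-term estimate which the paper dismisses as routine.
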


\begin{proof} We have to prove that $\{P_{\alpha} x\} \longrightarrow x$ in norm for all $x \in X$ and then the result is just an application of Proposition~\ref{Prop-family-projections}. This is surely well-known, but we have not found a concrete reference, so we include an easy argument for the sake of completeness. First, let us prove that $Z= \overline{\bigcup_{\alpha \in \Lambda} P_{\alpha}(X)}$ is the whole of $X$. Otherwise, there is a non-null element $x_0^*\in X^*$ which is zero on $Z$. By hypothesis, $\{P_{\alpha}^* x_0^*\} \longrightarrow x_0^*$, but $P_\alpha^* x_0^*=x_0^*\circ P_\alpha=0$, a contradiction. Now, it is routine, using that the images of the family of projections is increasing, to prove that it converges pointwise in norm to the identity, as needed.
\end{proof}

Our next abstract result deals with range spaces instead of domain spaces. The idea of the proof, which is an adaptation to the BPBp of \cite[Lemma~3.4]{Johnson}, was used in \cite[Theorem~4.2]{8authors} to prove that every pair $(X,Y)$ has the BPBp for compact operators when $Y^*$ is isometric to an $L_1(\mu)$ space.

\begin{prop}\label{prop-range-space} Let $X$ and $Y$ be Banach spaces. Suppose that there exists a net of norm-one projections $\{Q_{\lambda}\}_{\lambda\in \Lambda}\subset \mathcal{L}(X,Y)$ such that $\{Q_{\lambda} y\} \longrightarrow y$ in norm for every $y\in Y$. If there is a function $\eta:\R^+\longrightarrow \R^+$ such that the pairs $(X, Q_{\lambda} (Y))$ with $\lambda\in \Lambda$ have the BPBp for compact operators with the function $\eta$, then the pair $(X, Y)$ has the BPBp for compact operators.
\end{prop}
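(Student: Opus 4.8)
The plan is to mirror the structure of the Main technical lemma (Lemma~\ref{Main-prop}), but acting on the range side rather than the domain side; here I read the $Q_\lambda$ as norm-one projections \emph{on} $Y$, consistent with the hypothesis $Q_\lambda y\longrightarrow y$ for every $y\in Y$. Given $\e>0$, I would set $\eta'(\e)=\tfrac12\eta(\e/2)$ and fix $T\in\mathcal{K}(X,Y)$ with $\|T\|=1$ and $x_0\in S_X$ satisfying $\|Tx_0\|>1-\eta'(\e)$. The whole argument will then consist of approximating $T$ by an operator whose range sits inside some $Q_\lambda(Y)$, applying the hypothesis there, and carrying the output back into $Y$.

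The crucial first step, and the one place where compactness of $T$ is indispensable, is to upgrade the pointwise convergence $Q_\lambda y\longrightarrow y$ to convergence that is uniform on the relatively compact set $T(B_X)$. Since the net $\{Q_\lambda\}$ is uniformly bounded (norm one) and converges pointwise to the identity on $Y$, a standard argument (cover $\overline{T(B_X)}$ by finitely many small balls and use that $\Lambda$ is directed) shows it converges uniformly on that compact set. Hence, for any prescribed $\delta>0$ I can choose an index $\lambda$ with $\sup_{x\in B_X}\|Q_\lambda Tx-Tx\|\le\delta$, that is, $\|Q_\lambda T-T\|\le\delta$ as operators. I would take $\delta=\min\{\e/2,\ \tfrac12\eta(\e/2)\}$.

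Next I would set $\widetilde{T}=Q_\lambda T$, regarded as a compact operator from $X$ into the closed subspace $Q_\lambda(Y)$ (its range lies there because $Q_\lambda$ is a projection onto $Q_\lambda(Y)$). Then $\|\widetilde{T}\|\le 1$ and
\[
\|\widetilde{T}x_0\|\ \geq\ \|Tx_0\|-\|Q_\lambda T-T\|\ >\ 1-\eta'(\e)-\delta\ \geq\ 1-\eta(\e/2),
\]
by the choices of $\eta'$ and $\delta$. Applying the BPBp for compact operators for the pair $(X,Q_\lambda(Y))$ with the function $\eta$, in the $\|\cdot\|\le 1$ form of Remark~\ref{BPBpLemma}(b), produces $\widetilde{S}\in\mathcal{K}(X,Q_\lambda(Y))$ and $x_1\in S_X$ with $\|\widetilde{S}\|=\|\widetilde{S}x_1\|=1$, $\|x_0-x_1\|<\e/2$ and $\|\widetilde{S}-\widetilde{T}\|<\e/2$.

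Finally I would let $S\in\mathcal{K}(X,Y)$ be $\widetilde{S}$ followed by the isometric inclusion $Q_\lambda(Y)\hookrightarrow Y$ (a compact operator, being a bounded composition with the compact $\widetilde{S}$). Because the inclusion preserves norms, $\|S\|=\|Sx_1\|=1$ and the vector $x_1$ transfers unchanged, so $\|x_0-x_1\|<\e/2<\e$; for the operator estimate the triangle inequality gives
\[
\|S-T\|\ \leq\ \|\widetilde{S}-\widetilde{T}\|+\|\widetilde{T}-T\|\ <\ \e/2+\delta\ \leq\ \e,
\]
which closes the argument. I expect the only genuine difficulty to be the uniform-convergence step on $T(B_X)$; everything else is bookkeeping of constants. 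It is worth stressing that this is precisely where the compactness of $T$ is used: for a non-compact $T$ one only controls $Q_\lambda T-T$ pointwise rather than in operator norm, and the transfer collapses.
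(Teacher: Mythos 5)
Your proposal is correct and follows essentially the same route as the paper: use compactness of $T(B_X)$ to get $\|Q_\lambda T-T\|$ small in operator norm (the paper does this via a finite $\eta'(\e)/3$-net of $T(B_X)$, which is the same uniform-convergence-on-compacta argument you sketch), apply the BPBp for compact operators to $Q_\lambda T$ viewed in $\mathcal{K}(X,Q_\lambda(Y))$, and push the resulting $\widetilde{S}$ back into $Y$ via the isometric inclusion. Your choice of constants differs trivially (the paper takes $\eta'(\eps)=\tfrac12\min\{\eta(\eps/2),\eps\}$), but the bookkeeping closes in both versions.
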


\begin{proof}
Write $\eta'(\eps)=\frac12 \min\left\{\eta(\eps/2),\,\eps\right\}$. Let $T\in \mathcal{K}(X,Y)$ with $\|T\|=1$ and $x_0\in S_X$ such that
$$
\|T x_0\|>1-\eta'(\eps).
$$
As $T(B_X)$ is relatively compact, we may find $y_1,\ldots,y_m\in Y$ such that
$$
\min_{j} \|Tx - y_j\|<\eta'(\eps)/3
$$
for every $x\in B_X$. By hypothesis, there is $\lambda\in \Lambda$ such that
$$
\|Q_{\lambda}(y_j)-y_j\|<\eta'(\eps)/3 \qquad (j=1,\dots,m).
$$
Now, for every $x\in B_X$, we have
\begin{align*}
\|Tx-Q_{\lambda}Tx\|&\leq \min_{j} \|Tx - y_j\| + \|y_j-Q_\lambda(y_j)\| + \|Q_{\lambda}(y_j)-Q_\lambda T x\| \\ & < \min_j 2\|Tx-y_j\| + \eta'(\eps)/3 < \eta'(\eps).
\end{align*}
Therefore,
$$
\|Q_\lambda T - T\| \leq \eta'(\eps).
$$
Then we have that $\widetilde{T}=Q_\lambda T\in \mathcal{K}(X,Q_\lambda(Y))$ with $\|\widetilde{T}\|\leq 1$ and
$$
\|\widetilde{T}(x_0)\| \geq \|Tx_0\| - \|Q_\lambda T - T\|> 1 -2\eta'(\eps)\geq 1 - \eta(\eps/2).
$$
Then, there exists $\widetilde{S}\in \mathcal{K}(X,Q_\lambda(Y))$ with $\|\widetilde{S}\|=1$ and $x_1\in S_X$ such that
$$
\|\widetilde{S}(x_1)\|=1,\qquad \|\widetilde{S} - \widetilde{T}\|<\eps/2,\quad \text{and} \quad \|x_0-x_1\|<\eps/2<\eps.
$$
If we write $S\in \mathcal{K}(X,Y)$ to denote the operator $\widetilde{S}$ viewed as an operator with range in $Y$, we have that
\begin{equation*}
\|S\|=1=\|Sx_1\|\quad \text{and} \quad \|S-T\|\leq \|\widetilde{S} - \widetilde{T} \| + \|\widetilde{T} - T\|<\eps/2 + \eta'(\eps)\leq \eps.\qedhere
\end{equation*}
\end{proof}

We finish this section about technical results with an extension of some results from \cite[\S 2]{ACKLM} to compact operators.

\begin{lemma}\label{lemma-X-Y_1-p-Y_2}
Let $X$, $X_1$, $X_2$, $Y$, $Y_1$, $Y_2$ be Banach spaces.
\begin{enumerate}
\item[(a)] If one of the the pairs $(X_1\oplus_1 X_2,Y)$ or $(X_1\oplus_\infty X_2,Y)$ has the BPBp for compact operators with a function $\eta$, then so do the pairs $(X_j,Y)$ with $j=1,2$ with the same function $\eta$.
\item[(b)] If one of the pairs $(X,Y_1 \oplus_1 Y_2)$ $(X,Y_1 \oplus_\infty Y_2)$ has the BPBp for compact operators with a function $\eta$, then so do the pairs $(X,Y_j)$ with $j=1,2$ with the the same function $\eta$.
\end{enumerate}
\end{lemma}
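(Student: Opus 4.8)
The plan is to realize each summand as a $1$-complemented isometric subspace of the sum and to transport the Bishop--Phelps--Bollob\'as data back and forth through the canonical inclusion $\iota_j$ and projection $\pi_j$, which satisfy $\|\iota_j\|=\|\pi_j\|=1$ and $\pi_j\iota_j=\Id$. For the domain statement~(a), given a compact $T\in\mathcal{K}(X_j,Y)$ with $\|T\|=1$ and $x_0\in S_{X_j}$ almost attaining, I would lift it to $\widehat T:=T\pi_j$ on $X_1\oplus_p X_2$; then $\|\widehat T\|=1$, $\widehat T$ is compact, and $\widehat T(\iota_j x_0)=Tx_0$, so the hypothesis applied to $(\widehat T,\iota_j x_0)$ at the same $\e$ (hence threshold $\eta(\e)$) produces $\widehat S$ and a norm-one vector. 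For the range statement~(b) I would symmetrically take $\widehat T:=\iota_j T$ and project the output back by $\pi_j$. In both the naive version, $\|S-T\|\le\|\widehat S-\widehat T\|<\e$ is immediate from $\pi_j\iota_j=\Id$; the whole difficulty is in getting the summand operator to have norm and attainment \emph{exactly} $1$.

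The two ``max-type'' cases are then routine. In~(a) with $\oplus_1$ the operator norm on the sum equals $\max(\|S_1\|,\|S_2\|)$ where $S_k:=\widehat S\iota_k$, and writing the norm-attaining vector as $(u,v)$ with $\|u\|+\|v\|=1$, the equality case of the triangle inequality in
\[
1=\|S_1u+S_2v\|\le\|S_1\|\,\|u\|+\|S_2\|\,\|v\|\le\max(\|S_1\|,\|S_2\|)\bigl(\|u\|+\|v\|\bigr)=1
\]
forces $\|S_1\|=1$ and $\|S_1u\|=\|u\|$, so $S:=S_1$ attains its norm at $u/\|u\|$; since $\bigl\|u-u/\|u\|\bigr\|=\bigl|\,\|u\|-1\,\bigr|=\|v\|$, the displacement estimate $\|x_0-u/\|u\|\|\le\|x_0-u\|+\|v\|<\e$ survives with no loss. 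In~(b) with $\oplus_\infty$ the range norm is a maximum, so from $\|\widehat Sx_1\|=\max(\|Sx_1\|,\|Rx_1\|)=1$ together with $\|Rx_1\|\le\|R\|<\e<1$ one gets $\|Sx_1\|=1=\|S\|$ at once. Both cases reproduce the function $\eta$ exactly, and the $j=1,2$ symmetry finishes them.

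The genuinely delicate cases are the ``sum-type'' ones: (a) with $\oplus_\infty$ and (b) with $\oplus_1$. Here plain restriction/projection only yields an operator of norm close to, but not equal to, $1$ that almost, but not exactly, attains its norm (in~(a) one merely has $\|S_1u\|\ge 1-\|S_2\|\,\|v\|$, and the restricted $S_1$ need not attain at all). My plan is to restore exact norm attainment by a rank-one correction routing the extremal value into the summand while keeping the operator norm $\le 1$: in the domain case replace $\iota_j$ by the still-isometric embedding $x\mapsto\bigl(x,u^*(x)v\bigr)$, with $u^*\in S_{X_j^*}$ norming $u$, so that the corrected operator sends $u$ to $\widehat S(u,v)$ and thus has norm exactly $1$ attained at $u$; in the range case post-compose $\widehat S$ with the norm-one folding map $q(y_1,y_2)=y_1+\rho(y_2)$, where $\rho$ is a norm-one map carrying $Rx_1$ onto a positive multiple of $Sx_1/\|Sx_1\|$, so that $q\widehat S$ has norm $1$ attained at $x_1$.

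The main obstacle is precisely the control of $\|S-T\|$ \emph{after} this correction. The correction perturbs the naive operator by an amount governed by the discarded component ($\|S_2\|\,\|v\|$ in the domain case, $\|R\|$ in the range case), and although each of these is dominated by $\|\widehat S-\widehat T\|<\e$, a crude estimate produces a bound slightly larger than $\e$; the crux is therefore the bookkeeping needed to package these contributions so that the final bound $\|S-T\|<\e$ holds with the \emph{same} modulus $\eta$ rather than a rescaled one. I expect this estimate, together with the verification that the corrected embedding and folding maps are genuinely a norm-one isometry and a norm-one contraction, to be where essentially all the work lies; once it is in place, reading off the witness $x_1$ within $\e$ of $x_0$ completes the argument, again symmetrically in $j$ and in the two sum-types.
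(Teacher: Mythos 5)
Your plan follows essentially the same route as the paper, which in fact gives no details here: it simply says to adapt Propositions 2.3, 2.6 and 2.7 of the cited paper of Aron, Choi, Kim, Lee and Mart\'{\i}n to compact operators, and those propositions use exactly your lift-by-$\iota_j\pi_j$ scheme, the equality-in-the-triangle-inequality argument for the two max-type cases, and rank-one corrections (a modified embedding $x\mapsto(x,u^*(x)v)$, resp.\ a folding $q(y_1,y_2)=y_1+w^*(y_2)\,Sx_1/\|Sx_1\|$) for the two sum-type cases. All your constructions are correct, and compactness passes through them for free since you only ever compose the compact $\widehat S$ with bounded maps. The one point you leave open --- recovering $\|S-T\|<\e$ after the correction with the \emph{same} $\eta$ --- is not actually a difficulty, and you should not estimate it by splitting off the discarded component. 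The key observation is that $\widehat T$ is itself unchanged by the correction: in the domain case $\widehat T(x,x_2)=Tx$ for \emph{every} $x_2$, so $\widehat T\circ J=T$ where $Jx=(x,u^*(x)v)$, and hence
\[
S-T=\widehat S J-\widehat T J=(\widehat S-\widehat T)\circ J,\qquad \|S-T\|\le\|\widehat S-\widehat T\|\,\|J\|<\e;
\]
in the range case $\widehat T x=(Tx,0)$, so $q\circ\widehat T=T$ and likewise $\|q\widehat S-T\|=\|q(\widehat S-\widehat T)\|\le\|\widehat S-\widehat T\|<\e$. With that factorization the bookkeeping you anticipated disappears entirely and the same function $\eta$ survives, as claimed. (Minor housekeeping: restrict to $0<\e<1$, which is harmless since $\eta$ may be replaced by $\e\mapsto\eta(\min\{\e,1/2\})$, so that $\|v\|<1$ forces $\|u\|=1$ in the $\oplus_\infty$ domain case and $\|Sx_1\|\ge 1-\e>0$ in the $\oplus_1$ range case; and treat $v=0$, resp.\ $Rx_1=0$, separately by taking the correction to be trivial.)
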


\begin{proof} For (a) one just has to adapt to compact operators the proof of \cite[Proposition~2.3]{ACKLM} for $\oplus_1$ and  \cite[Proposition~2.6]{ACKLM} for $\oplus_\infty$.

(b). The result follows again adapting to compact operators the proofs of \cite[Proposition~2.7]{ACKLM} for $\oplus_1$ and \cite[Proposition~2.3]{ACKLM} for $\oplus_\infty$.
\end{proof}

\section{Applications}\label{sect:applications}
The first main application of the results in the previous section is the following sufficient condition for a pair of the form $(C_0(L),Y)$ to have the BPBp for compact operators.

\begin{theorem}\label{theorem-C0Y}
Let $L$ be a locally compact Hausdorff topological space and let $Y$ be a Banach space. If $(c_0,Y)$ has the BPBp for compact operators, then $(C_0(L),Y)$ has the BPBp for compact operators.
\end{theorem}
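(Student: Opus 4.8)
The plan is to apply the Main technical lemma (Lemma~\ref{Main-prop}) with $X=C_0(L)$, recalling that $X^*=M(L)$ is the space of regular Borel measures on $L$. Given $\delta>0$, measures $x_1^*,\dots,x_n^*\in B_{M(L)}$ and $x_0\in S_{C_0(L)}$, I would construct a norm-one projection $P$ on $C_0(L)$ whose range $P(C_0(L))$ is isometric to $\ell_\infty^N=c_0^N$ for some $N$, and take $i\colon P(C_0(L))\hookrightarrow C_0(L)$ to be the inclusion. Then condition~(3) of Lemma~\ref{Main-prop} holds automatically since $P$ is a projection, and condition~(2) reduces to $\|Px_0-x_0\|<\delta$. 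Condition~(4) comes for free: since $c_0=c_0^N\oplus_\infty c_0$ isometrically, Lemma~\ref{lemma-X-Y_1-p-Y_2}(a) shows that $(c_0^N,Y)$ inherits the BPBp for compact operators from $(c_0,Y)$ with the very same function $\eta$, \emph{independently of} $N$, which is exactly the uniformity that Lemma~\ref{Main-prop} demands.

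To build $P$ I would proceed as follows. Set $\lambda=\sum_{j=1}^n|x_j^*|$, a finite positive regular measure, so that each $x_j^*\ll\lambda$ with Radon--Nikod\'ym density $g_j\in L_1(\lambda)$. Using regularity of $\lambda$ together with $x_0\in C_0(L)$, fix a compact set $K$ carrying all but $\delta$ of the mass of $\lambda$ and containing $\{|x_0|\geq\delta\}$. The heart of the argument is the choice of a \emph{peaked partition of unity}: finitely many continuous functions $\varphi_1,\dots,\varphi_N$ with $0\leq\varphi_i\leq1$, pairwise disjoint supports, and disjoint ``plateaus'' $B_i\subseteq\{\varphi_i=1\}$ whose union covers $K$ up to a set of arbitrarily small $\lambda$-measure and on each of which $x_0$ has small oscillation. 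With $\mu_i=\lambda|_{B_i}/\lambda(B_i)\in S_{M(L)}$ I would define
\[
Pf=\sum_{i=1}^N\Bigl(\int f\,d\mu_i\Bigr)\varphi_i.
\]
Since the supports are disjoint and $\varphi_i\equiv1$ on $B_i$, one checks $\int\varphi_j\,d\mu_i=\delta_{ij}$, so $P$ is genuinely a projection, $\|P\|=\max_i\|\mu_i\|=1$, and its range $\spann\{\varphi_i\}$ is isometric to $\ell_\infty^N$ because the $\varphi_i$ have disjoint supports and norm one.

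It then remains to verify conditions~(1) and~(2). For~(1), the computation $P^*x_k^*=\sum_i x_k^*(\varphi_i)\,\mu_i$ shows that $P^*x_k^*$ is, up to an error controlled by $|x_k^*|$ of the gaps $\bigcup_i(\supp\varphi_i\setminus B_i)$ and of $L\setminus K$, exactly the measure with $\lambda$-density $\sum_i\frac{x_k^*(B_i)}{\lambda(B_i)}\chi_{B_i}$, that is, the conditional expectation of $g_k$ with respect to the finite algebra generated by $\{B_i\}$. By the $L_1(\lambda)$-convergence of conditional expectations along refining partitions this can be made $\delta$-close to $g_k\,d\lambda=x_k^*$; this is where I use that we deal with only finitely many measures. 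For~(2), on each plateau $Px_0$ equals the $\lambda$-average of $x_0$ over $B_i$, hence lies within the oscillation of $x_0$ on $B_i$; off the plateaus $Px_0$ is dominated by these averages, while $|x_0|<\delta$ outside $K$, so $\|Px_0-x_0\|_\infty<\delta$ once the partition is fine enough. With all four hypotheses in place, Lemma~\ref{Main-prop} yields that $(C_0(L),Y)$ has the BPBp for compact operators.

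The main obstacle is the simultaneous construction of the peaked partition of unity, where I must reconcile three competing demands on the sets $B_i$: that the gaps between the bumps $\varphi_i$ carry little $\lambda$-mass, so that~(1) holds; that they be fine with respect to the topology so that $x_0$ barely oscillates on each, so that~(2) holds; and that the plateaus be honestly separated by disjoint open sets, so that the $\varphi_i$ can have disjoint supports and the range be isometric to $\ell_\infty^N$. The last requirement is delicate when $L$ is connected, since one cannot cover $K$ by disjoint open sets, and the fix is precisely to allow small-$\lambda$-measure gaps between plateaus rather than insisting on an exact partition. Producing such a system by Urysohn's lemma, with the accompanying measure-theoretic estimates, is the fiddly technical core, whereas everything else is the bookkeeping of Lemma~\ref{Main-prop}.
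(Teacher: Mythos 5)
Your overall strategy (reduce to Lemma~\ref{Main-prop} with a projection averaging over disjoint compact sets, and get the uniform function $\eta$ for all $(\ell_\infty^N,Y)$ from $(c_0,Y)$ via the $\oplus_\infty$-summand lemma) is exactly the paper's, and your verification of condition~(1) by conditional expectations is fine. But there is a genuine gap at condition~(2): you take $i$ to be the inclusion, so that (2) becomes $\|Px_0-x_0\|_\infty<\delta$, and this estimate is simply false in general. The difficulty is not merely ``fiddly'': since the $\varphi_i$ must have pairwise disjoint supports (you need this for $\|P\|=1$ and for the range to be isometric to $\ell_\infty^N$), the function $Px_0=\sum_i\bigl(\int x_0\,d\mu_i\bigr)\varphi_i$ vanishes off $\bigcup_i\supp\varphi_i$ and is damped by $\varphi_i$ on the gaps $\supp\varphi_i\setminus B_i$, whereas $x_0$ need not be small there. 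Take $L=[0,1]$, $x_0\equiv 1$ and any $N\geq 2$: connectedness forces a point $t_0$ with $\sum_i\varphi_i(t_0)=0$ (indeed an entire gap interval between two of the disjoint supports), so $\|Px_0-x_0\|_\infty=1$ no matter how fine the partition is. Your proposed fix --- making the gaps of small $\lambda$-measure --- controls an $L_1(\lambda)$-type error, which is what you need for condition~(1) on the adjoint side, but it gives no control whatsoever on the supremum norm of $Px_0-x_0$.

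This is precisely why the paper does not use the inclusion. In Lemma~\ref{lemma:C0LXY-projections} the right inverse $i$ of $P$ is a different norm-one operator: choosing a cutoff $\Psi$ equal to $1$ on $\bigcup_k K_k$ and a norm-one functional $\Upsilon_0$ on $P(C_0(L))$ with $\Upsilon_0(Pf_0)=\|Pf_0\|>1-\delta/2$, one sets
\begin{equation*}
\Bigl[i\Bigl(\sum_k \alpha_k\varphi_k\Bigr)\Bigr](t)=\Psi(t)\sum_k\alpha_k\varphi_k(t)+\bigl(1-\Psi(t)\bigr)\,\Upsilon_0\Bigl(\sum_k\alpha_k\varphi_k\Bigr)f_0(t),
\end{equation*}
which still satisfies $P\circ i=\Id_{P(C_0(L))}$ and $\|i\|\leq 1$, but re-inserts (a multiple of) $f_0$ itself off the compact sets, so that $\|i(Pf_0)-f_0\|_\infty<\delta$ can be verified globally. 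Without this extra construction your argument does not go through; note that Lemma~\ref{Main-prop} was deliberately stated with a general right inverse $i$ rather than with the inclusion exactly to accommodate this.
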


The result will be proved by applying Lemma~\ref{Main-prop} and, to do so, we need two preliminary results. The first one is the following lemma, which we only need for $X=\K$, but we state in the general form for completeness.

\begin{lemma}\label{lemma-c0Y}
Let $X$, $Y$ be Banach spaces. Then the following are equivalent:
\begin{enumerate}
\item[(i)] The pair $(c_0(X),Y)$ has the BPBp for compact operators;
\item[(ii)] there is a function $\eta:\R^+\longrightarrow \R^+$ such that the pairs $(\ell_\infty^{m}(X),Y)$ with $m\in \N$ have the BPBp for compact operators with the function $\eta$.
\end{enumerate}
Moreover, when $\mathcal{K}(X,Y)=\mathcal{L}(X,Y)$ (in particular, if one of the spaces $X$ or $Y$ is finite-dimensional), this happens when $(c_0(X),Y)$ or $(\ell_\infty(X),Y)$ has the BPBp.
\end{lemma}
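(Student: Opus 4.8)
The plan is to exploit, for each $m\in\N$, the isometric decomposition
\[
c_0(X)=\ell_\infty^m(X)\oplus_\infty c_0(X)
\]
obtained by splitting off the first $m$ coordinates (inside a finite block the $c_0$-sum is exactly the sup-norm sum, and the tail is again a copy of $c_0(X)$), together with the sequence $\{P_m\}_{m\in\N}$ of truncation projections $P_m\colon c_0(X)\to c_0(X)$ that keep the first $m$ coordinates. Each $P_m$ is a norm-one projection with $P_m(c_0(X))=\ell_\infty^m(X)$, and the ranges increase with $m$. I would prove the two implications separately; the only genuinely new input is the passage through the tools of Section~\ref{sec:tools}, everything else being bookkeeping on these decompositions.

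For $(i)\Rightarrow(ii)$: once $(c_0(X),Y)$ has the BPBp for compact operators with some function $\eta_0$, I apply Lemma~\ref{lemma-X-Y_1-p-Y_2}(a) to the decomposition above. This yields at once that each $(\ell_\infty^m(X),Y)$ has the BPBp for compact operators with the \emph{same} function $\eta_0$, so $(ii)$ holds with $\eta=\eta_0$. This direction is therefore immediate and automatically uniform in $m$.

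For $(ii)\Rightarrow(i)$: I would verify the hypotheses of Corollary~\ref{corollary-ordered-projections} for the net $\{P_m\}$. These are norm-one projections with increasing ranges $\ell_\infty^m(X)$, so the only point needing attention is the norm convergence $\{P_m^*x^*\}\to x^*$ for every $x^*\in c_0(X)^*$. Using the isometric identification $c_0(X)^*=\ell_1(X^*)$, the adjoint $P_m^*$ is again truncation to the first $m$ coordinates, whence $\|x^*-P_m^*x^*\|=\sum_{k>m}\|x_k^*\|\to 0$ for $x^*=(x_k^*)_k\in\ell_1(X^*)$. Since by hypothesis all the pairs $(P_m(c_0(X)),Y)=(\ell_\infty^m(X),Y)$ share a common function $\eta$, Corollary~\ref{corollary-ordered-projections} gives that $(c_0(X),Y)$ has the BPBp for compact operators.

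For the \emph{Moreover} part, the key observation is that $\mathcal{K}(X,Y)=\mathcal{L}(X,Y)$ forces $\mathcal{K}(\ell_\infty^m(X),Y)=\mathcal{L}(\ell_\infty^m(X),Y)$ for each fixed $m$, because an operator on a finite $\ell_\infty$-sum is compact exactly when each of its finitely many coordinate restrictions is. Hence on every $(\ell_\infty^m(X),Y)$ the BPBp for compact operators coincides with the ordinary BPBp. Assuming $(c_0(X),Y)$, resp.\ $(\ell_\infty(X),Y)$, has the ordinary BPBp, I would apply the ordinary-operator version of Lemma~\ref{lemma-X-Y_1-p-Y_2}(a) (i.e.\ the propositions of \cite{ACKLM} from which it is adapted) to the isometric decompositions $c_0(X)=\ell_\infty^m(X)\oplus_\infty c_0(X)$, resp.\ $\ell_\infty(X)=\ell_\infty^m(X)\oplus_\infty\ell_\infty(X)$, obtaining the ordinary BPBp for all $(\ell_\infty^m(X),Y)$ with a common function, which by the coincidence above is precisely $(ii)$. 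Here lies the one real subtlety: one may \emph{not} deduce the BPBp for compact operators of $(c_0(X),Y)$ directly from its ordinary BPBp, since the operator produced by the ordinary BPBp need not be compact (cf.\ the Introduction). The hypothesis $\mathcal{K}=\mathcal{L}$ is thus used precisely, and only, on the finite $\ell_\infty$-sums $\ell_\infty^m(X)$ after descending to them; this is the step I expect to be the main obstacle to state cleanly.
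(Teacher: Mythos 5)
Your proposal is correct and follows essentially the same route as the paper: the implication (i)$\Rightarrow$(ii) via Lemma~\ref{lemma-X-Y_1-p-Y_2}.(a) using that each $\ell_\infty^m(X)$ is an $\ell_\infty$-summand of $c_0(X)$, the implication (ii)$\Rightarrow$(i) via the truncation projections and the projection machinery of Section~\ref{sec:tools} (the paper invokes Proposition~\ref{Prop-family-projections}, of which your Corollary~\ref{corollary-ordered-projections} argument is the ordered special case), and the \emph{Moreover} part via \cite[Proposition~2.6]{ACKLM} together with the observation that every operator from $\ell_\infty^m(X)$ into $Y$ is compact when $\mathcal{K}(X,Y)=\mathcal{L}(X,Y)$. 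Your explicit verification of $\|x^*-P_m^*x^*\|=\sum_{k>m}\|x_k^*\|\to 0$ in $c_0(X)^*=\ell_1(X^*)$ merely fills in a detail the paper leaves as ``easy''.
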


\begin{proof}
(i) implies (ii) follows from Lemma~\ref{lemma-X-Y_1-p-Y_2}.(a), as each $\ell_\infty^{m}(X)$ is an $\ell_\infty$-summand in $c_0(X)$. (ii) implies (i) is an easy consequence of Corollary~\ref{Prop-family-projections}.

Finally, when $\mathcal{K}(X,Y)=\mathcal{L}(X,Y)$, if $(c_0(X),Y)$ or $(\ell_\infty(X),Y)$ has the BPBp, then (ii) holds by using \cite[Proposition~2.6]{ACKLM} and the fact that every operator from $\ell_\infty^m(X)$ into $Y$ is compact.
\end{proof}

It follows, in particular, the following consequence.

\begin{cor}
Let $Y$ be a Banach space. If the pair $(c_0,Y)$ has the BPBp, then it has the BPBp for compact operators.
\end{cor}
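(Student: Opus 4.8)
The plan is to obtain this corollary as the special case $X=\K$ of the \emph{moreover} part of Lemma~\ref{lemma-c0Y}, so that almost no new work should be required. First I would note the isometric identification $c_0(\K)=c_0$: the $c_0$-sum of countably many copies of the scalar field is exactly $c_0$. Hence the pair $(c_0,Y)$ under consideration is literally the pair $(c_0(X),Y)$ appearing in Lemma~\ref{lemma-c0Y} with $X=\K$.

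Next I would check that the hypothesis of the \emph{moreover} clause is met. Since $X=\K$ is one-dimensional, in particular finite-dimensional, we have $\mathcal{K}(\K,Y)=\mathcal{L}(\K,Y)$; equivalently, each finite-dimensional space $\ell_\infty^m(\K)=\ell_\infty^m$ sends every bounded operator into $Y$ to a compact one. This is precisely the situation in which the \emph{moreover} clause of Lemma~\ref{lemma-c0Y} asserts that the BPBp for the pair $(c_0(\K),Y)$ (or for $(\ell_\infty(\K),Y)$) already forces condition (i), namely that $(c_0(\K),Y)$ has the BPBp for compact operators.

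Applying this with the standing assumption that $(c_0,Y)=(c_0(\K),Y)$ has the BPBp then yields immediately that $(c_0,Y)$ has the BPBp for compact operators, which is the desired conclusion. I do not anticipate any genuine obstacle here: the content is entirely carried by Lemma~\ref{lemma-c0Y}, and the only points requiring a word of care are the identification $c_0(\K)=c_0$ and the observation that finite-dimensionality of $\K$ is what makes every operator out of each $\ell_\infty^m$ compact, so that the BPBp and the BPBp for compact operators coincide on the building blocks $(\ell_\infty^m,Y)$.
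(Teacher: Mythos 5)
Your proposal is correct and follows exactly the paper's intended route: the corollary is stated as an immediate consequence of the \emph{moreover} clause of Lemma~\ref{lemma-c0Y} with $X=\K$, using that $c_0(\K)=c_0$ and that $\mathcal{K}(\K,Y)=\mathcal{L}(\K,Y)$ by finite-dimensionality. Nothing is missing.
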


The next preliminary result is based on \cite[Proposition~3.2]{8authors} and gives the possibility of applying Lemma~\ref{Main-prop} when the domain space is a $C_0(L)$ space.

\begin{lemma}[Extension of \mbox{\cite[Proposition~3.2]{8authors}}]\label{lemma:C0LXY-projections}
Let $L$ be a locally compact Hausdorff topological space. Given $\delta>0$, $\mu_1,\ldots,\mu_n\in B_{C_0(L)^*}$ and $f_0\in B_{C_0(L)}$, there exist a norm-one projection $P\in \mathcal{L}(C_0(L),C_0(L))$ and a norm-one operator $i\in \mathcal{L}(P(C_0(L)),C_0(L))$ such that:
\begin{itemize}
\item[(1)] $\|P^*\mu_j - \mu_j\|<\delta$ for $j=1,\ldots,n$;
\item[(2)] $\|i(P f_0)-f_0\|<\delta$;
\item[(3)] $P\circ i=\Id_{P(C_0(L))}$;
\item[(4)] $P(C_0(L))$ is isometrically isomorphic to  $\ell_\infty^{m}$ for some $m\in \N$.
\end{itemize}
\end{lemma}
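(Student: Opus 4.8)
The plan is to build the projection $P$ from a finite partition of a suitable compact subset of $L$, so that the range $P(C_0(L))$ becomes a finite-dimensional space spanned by "bump-like" functions subordinate to the partition, isometric to $\ell_\infty^m$. First I would use the fact that each $\mu_j$ is a (regular Borel) measure on $L$ of total variation at most one to approximate: given $\delta>0$, I can find a compact set $K\subset L$ carrying almost all the mass of each $|\mu_j|$, i.e.\ $|\mu_j|(L\setminus K)<\delta/2$ for $j=1,\dots,n$, and also capturing the relevant behaviour of $f_0$. On this compact $K$, the functions $f_0$ and the measures $\mu_j$ are essentially controlled, which is what conditions (1) and (2) will require.

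Next I would produce a finite open cover of $K$ by sets of small diameter (in the sense that $f_0$ varies little on each piece, using uniform continuity of $f_0$ restricted to $K$ together with Urysohn-type functions for the local compactness), and extract a finite partition of unity $\{\varphi_1,\dots,\varphi_m\}$ with $\varphi_k\in C_0(L)$, $0\le\varphi_k\le 1$, $\sum_k\varphi_k\le 1$, and $\sum_k\varphi_k\equiv 1$ on $K$. Choosing a point $t_k$ in the support of each $\varphi_k$, I would define the averaging/evaluation projection
\[
P(g)=\sum_{k=1}^m g(t_k)\,\varphi_k \qquad (g\in C_0(L)),
\]
which is a norm-one operator with $P\circ P=P$ once the $\varphi_k$ are chosen with disjoint enough supports (so that $\varphi_j(t_k)=\delta_{jk}$), and whose range is $\spann\{\varphi_1,\dots,\varphi_m\}$. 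Because $\sum_k\varphi_k\le 1$ with disjoint supports, the map sending $(a_1,\dots,a_m)\mapsto \sum_k a_k\varphi_k$ is an isometry from $\ell_\infty^m$ onto $P(C_0(L))$, giving (4). The companion operator $i$ is the natural inclusion $i\colon P(C_0(L))\hookrightarrow C_0(L)$, which is norm-one and satisfies $P\circ i=\Id_{P(C_0(L))}$, giving (3) for free.

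It then remains to verify the approximation estimates. For (2), $\|i(Pf_0)-f_0\|=\|Pf_0-f_0\|$ is small because on each support piece $f_0$ is within $\delta$ of its value $f_0(t_k)$ (uniform continuity on $K$) and off $K$ both $f_0$ and $Pf_0$ are uniformly small by the choice of $K$. For (1), I would compute $P^*\mu_j$ as the discrete measure $\sum_k\mu_j(\varphi_k\,\cdot)$ concentrated near the $t_k$ and estimate $\|P^*\mu_j-\mu_j\|$ by splitting the integral over $K$ (where the partition of unity reconstructs $\mu_j$ up to the oscillation controlled by the fineness of the cover) and over $L\setminus K$ (where the mass is below $\delta/2$ by construction). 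The main obstacle I anticipate is the simultaneous reconciliation of \emph{all four} requirements with a \emph{single} choice of partition: the fineness needed to make $\|P^*\mu_j-\mu_j\|<\delta$ for the measures must be compatible with keeping $P$ a genuine norm-one projection onto an isometric $\ell_\infty^m$ (which forces the supports of the $\varphi_k$ to be disjoint and to satisfy $\varphi_j(t_k)=\delta_{jk}$) while still approximating $f_0$. Getting disjoint supports in a locally compact (not necessarily metrizable) $L$ requires a careful Urysohn/shrinking argument rather than a naive partition of unity, and this is where I would invest the most care, most likely by invoking and adapting the construction already carried out in \cite[Proposition~3.2]{8authors} for the scalar case and checking that it survives passage to the general $C_0(L)$ setting stated here.
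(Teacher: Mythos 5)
Your construction of $P$ by point evaluations, $P(g)=\sum_{k}g(t_k)\varphi_k$, cannot satisfy condition (1), and this is a fatal gap rather than a technicality. For such a $P$ one has $P^*\mu_j=\sum_k \mu_j(\varphi_k)\,\delta_{t_k}$, a purely atomic measure; if some $\mu_j$ is non-atomic (say Lebesgue measure on $[0,1]\subset L$), then $P^*\mu_j$ and $\mu_j$ are mutually singular and $\|P^*\mu_j-\mu_j\|=\|P^*\mu_j\|+\|\mu_j\|$, which is close to $2\|\mu_j\|$ no matter how fine the partition is. The total variation norm, unlike a weak$^*$ distance, is not improved by making the cover finer: $\|\delta_s-\delta_t\|=2$ for all $s\neq t$. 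Your phrase ``the partition of unity reconstructs $\mu_j$ up to the oscillation controlled by the fineness of the cover'' has no content for measures in the variation norm. The paper avoids this by defining $P$ as an \emph{averaging} operator against the dominating measure $\mu=\sum_{j=0}^n|\mu_j|$ over pairwise disjoint compact sets $K_k$ with $\mu(K_k)>0$, namely $P(f)=\sum_k \mu(K_k)^{-1}\bigl(\int_{K_k}f\,d\mu\bigr)\varphi_k$; then $P^*$ acts on the $\mu_j$ essentially as a conditional expectation in $L_1(\mu)$, and (1) follows from approximating the densities $d\mu_j/d\mu$ by simple functions over the partition $\{K_k\}$. (A smaller inconsistency: you ask the $\varphi_k$ to have pairwise disjoint supports \emph{and} to sum to $1$ on all of $K$, which is impossible when $K$ is connected; but this is minor next to the point above.)

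A second, independent problem is your choice of $i$ as the inclusion. Once $P$ is the measure-averaging projection, the sets $K_k$ are dictated by the measures (they must have positive $\mu$-measure, so they cannot be enlarged to swallow $\{|f_0|\geq\delta/4\}$, which may be $\mu$-null), and consequently $Pf_0$ approximates $f_0$ only on $\bigcup_k K_k$, while off $\bigcup_k\supp\varphi_k$ one has $Pf_0=0$ with no control on $f_0$. So $\|Pf_0-f_0\|$ need not be small and the inclusion does not give (2). This is precisely why the lemma is stated with a separate operator $i$ and is not contained in \cite[Proposition~3.2]{8authors}. The paper's remedy is to add to the list a functional $\mu_0\in S_{C_0(L)^*}$ norming $f_0$ (so that $\|Pf_0\|>1-\delta/2$), take $\Upsilon_0\in S_{P(C_0(L))^*}$ norming $Pf_0$ and a compactly supported $\Psi:L\to[0,1]$ with $\Psi\equiv 1$ on $\bigcup_k K_k$ and $|Pf_0-f_0|<\delta/2$ on $\supp\Psi$, and to define
$i(g)=\Psi\, g+(1-\Psi)\,\Upsilon_0(g)\,f_0$ for $g\in P(C_0(L))$; this is a norm-one right inverse of $P$ that agrees with the inclusion where $Pf_0$ is trustworthy and is a rank-one map onto $\spann\{f_0\}$ elsewhere, which is what makes (2) and (3) hold simultaneously.
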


\begin{proof}
Almost everything is given by \cite[Proposition~3.2]{8authors} (and its proof), and only the operator $i$ has to be defined, but we need to give the details to do so. First, we may and do suppose that $\|f_0\|=1$ (indeed, if $f_0=0$, then (2) is always true and, as $P$ is a projection, (3) is true by just taking $i$ to be the inclusion of $P(C_0(L))$ into $C_0(L)$; otherwise, use $f_0/\|f_0\|$ and the result for $f_0$ will follows). Take $\mu_0\in S_{C_0(L)^*}$ such that $\mu_0(f_0)=\|f_0\|=1$. By the Riesz representation theorem, we may view $\mu_0,\mu_1,\ldots,\mu_n$ as Borel measures on $L$. Consider the finite positive regular measure $\mu=\sum_{j=0}^n |\mu_j|$. By using the Radon-Nikod\'{y}m theorem, the density of simple functions on $L_1(\mu)$, the regularity of $\mu$, Urysohn's lemma, and the continuity of $f_0$ (see the proof of \cite[Proposition~3.2]{8authors} for the details), we may find a finite collection $K_1,\ldots,K_m$ of pairwise disjoint compact subsets of $L$ with $\mu(K_k)>0$ for $k=1,\ldots, m$, a collection of continuous functions with pairwise disjoint compact support $\varphi_1,\ldots,\varphi_m$ with values in $[0,1]$ and such that $\varphi_k= 1$ on $K_k$ for every $k=1,\ldots,m$, in such a way that, if we define
$$
P(f)=\sum_{k=1}^m \frac{1}{\mu(K_k)}\left( \int_{K_k} f \,d\mu\right)\varphi_k \qquad \bigl(f\in C_0(L)\bigr),
$$
one has
\begin{enumerate}
\item[(a)] $P\in \mathcal{L}(C_0(L),C_0(L))$ is a norm-one projection;
\item[(b)] $\|P^*\mu_j - \mu_j\|<\delta/2$ for every $j=0,1,\ldots,n$;
\item[(c)] $P(C_0(L))$ is the linear span of $\{\varphi_1,\ldots,\varphi_m\}$ and so, it is isometrically isomorphic to $\ell_\infty^{m}$;
\item[(d)] $\sup\limits_{t,s\in K_k}|f_0(t)-f_0(s)|<\delta/2$ for $k=1,\ldots,m$;
\item[(e)] $\sup\left\{ \bigl|[Pf_0](t)-f_0(t)\bigr|\,:\,t\in\bigcup_{k=1}^m K_k\right\}<\delta/2$.
\end{enumerate}
Then, we have (1) and (4) of the lemma. Next, we use (b) with $j=0$ to get that
$$
\|P f_0\|\geq |\mu_0(P f_0)|\geq |\mu_0(f_0)|-\|P^*\mu_0 - \mu_0\|> 1 - \delta/2,
$$
and consider $\Upsilon_0\in P(X)^*$ such that $\|\Upsilon_0\|=1$ and $\Upsilon_0(P f_0)=\|P f_0\|>1-\delta/2$.  On the other hand, we may use (d) to get a compactly supported continuous function $\Psi:L\longrightarrow [0,1]$ such that $\Psi= 1$ on $\bigcup_{k=1}^m K_k$ and such that
$$
\sup_{t\in \supp(\Psi)} \bigl|[Pf_0](t)-f_0(t)\bigr| <\delta/2.
$$
We are now ready to define the operator $i\in \mathcal{L}(P(X),C_0(L))$ as follows:
$$
\left[i\left( \sum_{k=1}^m \alpha_k \varphi_k \right)\right](t)=\Psi(t)\left( \sum_{k=1}^m \alpha_k \varphi_k(t) \right)\, + \, \bigl(1-\Psi(t)\bigr)\Upsilon_0\left( \sum_{k=1}^m \alpha_k \varphi_k \right)\,f_0(t)
$$
for every $t\in L$ and every $\alpha_1,\ldots,\alpha_m\in \K$.
Then, $\|i\|\leq 1$, $P\circ i=\Id_{P(X)}$ (this gives (3)), and (2) is just the following computation:
\begin{equation*}
\|i(Pf_0)-f_0\|\leq \bigl\|\Psi [Pf_0 -f_0]\bigr\|\,   + \, \bigl\|(1-\Psi)\bigl[1-\|Pf_0\|]f_0\bigr\|<\delta/2 + \delta/2= \delta.\qedhere
\end{equation*}
\end{proof}

\begin{proof}[Proof of Theorem~\ref{theorem-C0Y}]
By Lemma~\ref{lemma:C0LXY-projections} and Lemma~\ref{lemma-c0Y}, we are in the hypotheses of Lemma~\ref{Main-prop}, so the result follows.
\end{proof}

A family of Banach spaces $Y$ for which $(c_0,Y)$ has the BPBp have been recently discovered \cite{AGKM-JMAA2016}, which strictly contains uniformly convex spaces and Banach spaces with property $\beta$. By using Theorem~\ref{theorem-C0Y}, one get that $(C_0(L),Y)$ have the BPBp for compact operators for all elements $Y$ of that family. We need a definition. Let $Y$ be a Banach space, $E \subset S_Y$ and $F: E \longrightarrow S_{Y^*}$. We say that the family $E$ is \emph{uniformly strongly exposed} by $F$ if for every
$\varepsilon > 0$ there is $\delta > 0$ such that
$$
y \in B_Y, \ \ e \in E, \ \ \re F(e)(y) > 1 - \delta \ \Rightarrow  \ \Vert y-e \Vert  < \varepsilon.
$$

The promised application is the following.

\begin{cor}\label{cor-beta-uniformlyconvex-C0LY}
Let $L$ be a locally compact Hausdorff topological space and let $Y$ be a Banach space. Suppose that there exist a set $I$, $\{ y_i : i \in I\}
\subset S_{Y}$, $\{ y_i^* : i \in I\} \subset S_{Y^*}$, a subset $E \subset S_Y$, a mapping $F: E \longrightarrow
S_{Y^*}$ and $0 \le \rho < 1$ satisfying that
\begin{enumerate}
\item[(1)] $y_{i}^* (y_i)=1, \forall i \in I$;
\item[(2)] $\vert y_{i}^* (y_j)\vert \le \rho, \forall i, j  \in I, i \ne j$;
\item[(3)] $E$ is uniformly strongly exposed by $F$;
\item[(4)]  $\vert F(e) (y_i)\vert \le \rho, \forall e \in E, i \in I$;
\item[(5)] for any $y\in Y$, $\Vert y \Vert = \max \bigl\{ \sup  \{ \vert y_{i}^* (y) \vert
: i \in I\}, \sup  \{ \vert F(e)  (y) \vert\, :\, e \in E\} \}$.
\end{enumerate}
Then, the pair $(C_0(L),Y)$ has the BPBp for compact operators.
\end{cor}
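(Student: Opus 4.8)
The plan is to reduce the statement to Theorem~\ref{theorem-C0Y}, so that the only thing left to establish is that the hypotheses (1)--(5) force the pair $(c_0,Y)$ to have the BPBp for compact operators. Once that reduction is in place, Theorem~\ref{theorem-C0Y} immediately upgrades this to the desired conclusion for $(C_0(L),Y)$ and an arbitrary locally compact Hausdorff space $L$.

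First I would observe that conditions (1)--(5) are precisely the geometric hypotheses isolated in \cite{AGKM-JMAA2016}, where it is shown that a Banach space $Y$ satisfying them makes the pair $(c_0,Y)$ enjoy the (ordinary) BPBp. This is the substantive input: the family $\{y_i\}$ together with the uniformly strongly exposed set $E$ provides, via the norming condition (5), a description of $\|\cdot\|$ that combines a ``property $\beta$'' part and a ``uniform convexity'' part, while the separation estimates (2) and (4) with the uniform constant $0\le\rho<1$ keep the two parts from interfering. I would simply invoke this result rather than reprove it.

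Next I would bridge from the ordinary BPBp to the BPBp for compact operators for the pair $(c_0,Y)$. Here the point is that the domain is assembled from finite-dimensional pieces: taking $X=\K$ in Lemma~\ref{lemma-c0Y}, one has $\mathcal{K}(\ell_\infty^m,Y)=\mathcal{L}(\ell_\infty^m,Y)$ for every $m$, since each $\ell_\infty^m$ is finite-dimensional, so the ``Moreover'' clause of that lemma applies. Consequently, the BPBp of $(c_0,Y)$ yields a single function $\eta$ witnessing condition (ii) of Lemma~\ref{lemma-c0Y}, and then (i) gives that $(c_0,Y)$ has the BPBp for compact operators; equivalently, one may quote the corollary stated immediately after Lemma~\ref{lemma-c0Y}.

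Finally, applying Theorem~\ref{theorem-C0Y} with this $Y$ concludes the argument. I do not expect a genuine analytic obstacle in the proof itself, since all the machinery has been assembled beforehand. The only care needed is bookkeeping: to confirm that (1)--(5) match the hypotheses of \cite{AGKM-JMAA2016} verbatim—in particular that (5) is the correct norming condition and that the same $\rho$ serves both families—so that the cited BPBp result genuinely applies. That verification, rather than any difficulty in the deduction, is where attention is required.
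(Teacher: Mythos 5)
Your argument is correct and is exactly the paper's proof: invoke \cite[Theorem~2.4]{AGKM-JMAA2016} to get that $(c_0,Y)$ has the BPBp, pass to the BPBp for compact operators via Lemma~\ref{lemma-c0Y} (using that every operator from $\ell_\infty^m$ into $Y$ is compact), and then apply Theorem~\ref{theorem-C0Y}. The paper states this same chain of citations in one sentence; your write-up only adds the bookkeeping details.
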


The proof is just an application of \cite[Theorem~2.4]{AGKM-JMAA2016} to get that $(c_0,Y)$ has the BPBp, Lemma~\ref{lemma-c0Y} and Theorem~\ref{theorem-C0Y}.

Observe that this result covers the already known cases of $Y$ being uniformly convex ($I=\emptyset$) and of $Y$ having property $\beta$ ($E=\emptyset$). It is proved in the cited paper \cite{AGKM-JMAA2016} that there are examples of Banach spaces $Y$ satisfying the requirements of Corollary~\ref{cor-beta-uniformlyconvex-C0LY} which are neither uniformly convex nor satisfy property $\beta$, even in dimension two.

Our next result about domain spaces deals with isometric preduals of $\ell_1$. We do not know whether it can be extended to general $L_1$-predual spaces.

\begin{theorem}\label{theorem-predualL1} Let $X$ be a Banach space such that $X^*$ is isometrically isomorphic to $\ell_1$ and let $Y$ be a Banach space. If the pair $(c_0, Y)$ has the BPBp for compact operators, then $(X, Y)$ has the BPBp for compact operators.
\end{theorem}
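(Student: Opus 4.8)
The plan is to check the hypotheses of the Main technical lemma (Lemma~\ref{Main-prop}). The first, purely formal, reduction is to fix the target function once and for all: since $(c_0,Y)$ has the BPBp for compact operators, Lemma~\ref{lemma-c0Y} applied with $X=\K$ produces a single $\eta\colon\R^+\longrightarrow\R^+$ such that every pair $(\ell_\infty^m,Y)$ with $m\in\N$ has the BPBp for compact operators with the function $\eta$. Consequently, to invoke Lemma~\ref{Main-prop} it suffices to show that for every $\delta>0$, every finite family $x_1^*,\ldots,x_n^*\in B_{X^*}$ and every $x_0\in S_X$ there exist a norm-one projection $P\in\mathcal{L}(X,X)$ and a norm-one operator $i\in\mathcal{L}(P(X),X)$ satisfying conditions (1), (2), (3) of that lemma and with $P(X)$ isometrically isomorphic to some $\ell_\infty^m$; condition (4) is then automatic. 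In short, the whole theorem reduces to proving, for preduals of $\ell_1$, the exact analogue of Lemma~\ref{lemma:C0LXY-projections}.

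To set up the construction I would use the canonical basis $\{e_k^*\}$ of $X^*=\ell_1$ together with the isometric embedding $X\hookrightarrow X^{**}=\ell_\infty$, under which $x$ corresponds to $(e_k^*(x))_k$ and $\|x\|=\sup_k|e_k^*(x)|$. Because finitely supported vectors are dense in $\ell_1$, after an arbitrarily small perturbation I may assume that $x_1^*,\ldots,x_n^*$ lie in the span of $e_1^*,\ldots,e_N^*$ for some $N$; I would also fix a norming functional $x_0^*\in S_{X^*}$ with $x_0^*(x_0)=1$ and add it to the list. With this normalization, condition (1) follows as soon as $P^*e_k^*=e_k^*$ for $k\le N$, that is, as soon as $P$ preserves the first $N$ coordinates.

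The heart of the matter, and the step I expect to be the main obstacle, is the construction of a norm-one projection $P$ on $X$ whose adjoint fixes $e_1^*,\ldots,e_N^*$ and whose range is isometric to some $\ell_\infty^m$. In the $C_0(L)$ case the analogous projection was written down explicitly through a partition into disjoint compact sets and Urysohn's lemma; here no such explicit formula is available, since the finite-dimensional $\ell_\infty^m$ pieces of $X$ are not given by coordinate truncations (those do not map $X$ into itself). Their existence is precisely the content of the isometric structure theory of $\ell_1$-preduals: such an $X$ is the closed increasing union of subspaces isometric to $\ell_\infty^m$ (Lazar--Lindenstrauss), and each such finite-dimensional piece, having dual $\ell_1^m$, is forced to be $\ell_\infty^m$ by the uniqueness of finite-dimensional $\ell_1$-preduals. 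I would use this theory to locate an $\ell_\infty^m$ subspace carrying a norm-one projection $P$ with $P^*e_k^*=e_k^*$ for $k\le N$. I emphasize that I do not expect these projections to be mutually compatible as $N$ grows: a single coherent sequence would yield an $\ell_\infty$ finite-dimensional decomposition of $X$ and let one apply Corollary~\ref{corollary-ordered-projections} directly, but this is unavailable in general (an $\ell_1$-predual need not even have a basis), which is exactly why the per-instance flexibility of Lemma~\ref{Main-prop} is needed.

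Finally, to produce $i$ I would reproduce the device of Lemma~\ref{lemma:C0LXY-projections}. The plain inclusion of $P(X)$ into $X$ does satisfy $P\circ i=\Id_{P(X)}$, but in general it fails (2), because $Px_0$ and $x_0$ can differ drastically on the coordinates $k>N$ (think of the constant vector of the space $c$). To repair this I would pick $\Upsilon_0\in S_{P(X)^*}$ with $\Upsilon_0(Px_0)=\|Px_0\|>1-\delta$ and define $i$ to act as the inclusion on the part of $P(X)$ governed by the first coordinates and as the rank-one correction $w\longmapsto\Upsilon_0(w)\,x_0$ on the complementary part, the ``cut-off'' between the two being realized by an element of $X$ supplied by the $\ell_\infty^m$-structure of the previous step. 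A direct estimate, identical in spirit to the one closing the proof of Lemma~\ref{lemma:C0LXY-projections}, then gives $\|i\|\le1$, $P\circ i=\Id_{P(X)}$ and $\|i(Px_0)-x_0\|<\delta$. With $(P,i)$ in hand, Lemma~\ref{Main-prop} completes the proof.
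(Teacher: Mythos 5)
Your overall strategy (reduce to Lemma~\ref{Main-prop} via Lemma~\ref{lemma-c0Y}) is sound, but the step you yourself call ``the heart of the matter'' is left unproved, and it is genuinely the hard part. The Lazar--Lindenstrauss representation of $X$ as the closure of an increasing union of subspaces $E_n$ isometric to $\ell_\infty^{m_n}$ gives, by injectivity of $\ell_\infty^{m_n}$, \emph{some} norm-one projection $R_n$ of $X$ onto $E_n$, but nothing forces $R_n^*$ to fix (or almost fix) the prescribed functionals $e_1^*,\dots,e_N^*$: that would require each $e_k^*$ to vanish on $\ker R_n$, a property the injectivity argument does not control. What is actually needed is a $w^*$-continuous norm-one projection on $X^*=\ell_1$ whose range is exactly $\spann\{e_1^*,\dots,e_N^*\}$ (or an $\ell_1^m$ containing it); this is a nontrivial theorem of Gasparis (Corollary~4.1 of \cite{Gasparis}), which is precisely what the paper invokes. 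Moreover, contrary to your expectation, Gasparis's result delivers these projections \emph{coherently}: one obtains $Q_n$ on $X^*$ with $Q_n(X^*)=\spann\{e_1^*,\dots,e_n^*\}$ and $Q_nQ_{n+1}=Q_n$, so the induced projections $P_n=Q_n^*|_X$ are norm-one, have nested ranges isometric to $\ell_\infty^n$, and satisfy $P_n^*x^*\to x^*$ for every $x^*\in\ell_1$. The paper then concludes at once from Corollary~\ref{corollary-ordered-projections} together with Lemma~\ref{lemma-c0Y}; the per-instance flexibility of Lemma~\ref{Main-prop} is not needed here.

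This also dissolves the difficulty you raise about the operator $i$. Your example of $c$ with coordinate truncations concerns the \emph{wrong} projections: their adjoints fix the coordinate functionals but annihilate the limit functional, which is part of the $\ell_1$-basis of $c^*$, so they do not converge to the identity on all of $X^*$. Once the adjoints converge on all of $X^*$ and the ranges are nested, one gets $P_nx\to x$ in norm for every $x\in X$ (this is exactly the content of Corollary~\ref{corollary-ordered-projections}), and the plain inclusion serves as $i$ in condition (2) of Lemma~\ref{Main-prop}. By contrast, the substitute $i$ you sketch cannot be carried out as stated: the cut-off device of Lemma~\ref{lemma:C0LXY-projections} rests on pointwise multiplication by a $[0,1]$-valued function, i.e., on the multiplicative structure of $C_0(L)$, and an abstract predual of $\ell_1$ offers no meaning for ``multiplying by an element of $X$''. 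So the proposal has two genuine gaps: the existence of the projection $P$ with the required adjoint behaviour is asserted rather than proved, and the proposed construction of $i$ is not well defined in this setting.
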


\begin{proof}
Let $\{e_n^*\}_{n\in \N}$ be a basis of $X^*$ isometrically equivalent to the usual $\ell_1$-basis, and let $Y_n$ be the linear span of $\{e_1^*,\ldots,e_n^*\}$ for every $n\in \N$. It is proved in \cite[Corollary~4.1]{Gasparis} that there exists a sequence of norm-one projections $Q_n:X^*\longrightarrow X^*$ such that $Q_n(X^*)=Y_n$ and $Q_nQ_{n+1}=Q_n$ for every $n\in \N$. It is also shown that writing $P_n$ for the restriction of $Q_n^*$ to $X$ and using the $w^*$-continuity of $Q_n$, one has that $P_n$ is a norm-one projection in $X$ whose range $E_n=Q_n^*(Y_n^*)\subset X$
is isometrically isomorphic to $\ell_\infty^n$; besides, as $Q_nQ_{n+1}=Q_n$, one has $P_{n+1}P_n=P_n$ and so, $P_{n}(X)\subseteq P_{n+1}(X)$.

Now, we may apply Lemma~\ref{lemma-c0Y} and Corollary~\ref{corollary-ordered-projections} to get that $(X,Y)$ has the BPBp for compact operators.
\end{proof}

As for pairs of the form $(C_0(L),Y)$ (see Corollary~\ref{cor-beta-uniformlyconvex-C0LY}), we have the following consequence.

\begin{cor}
Let $X$ be a Banach space such that $X^*$ is isometrically isomorphic to $\ell_1$ and let $Y$ be a Banach space. Suppose that there exist a set $I$, $\{ y_i : i \in I\}
\subset S_{Y}$, $\{ y_i^* : i \in I\} \subset S_{Y^*}$, a subset $E \subset S_Y$, a mapping $F: E \longrightarrow
S_{Y^*}$ and $0 \le \rho < 1$ satisfying that
\begin{enumerate}
\item[(1)] $y_{i}^* (y_i)=1, \forall i \in I$;
\item[(2)] $\vert y_{i}^* (y_j)\vert \le \rho, \forall i, j  \in I, i \ne j$;
\item[(3)] $E$ is uniformly strongly exposed by $F$;
\item[iv)]  $\vert F(e) (y_i)\vert \le \rho, \forall e \in E, i \in I$;
\item[(4)] for any $y\in Y$, $\Vert y \Vert = \max \bigl\{ \sup  \{ \vert y_{i}^* (y) \vert
: i \in I\}, \sup  \{ \vert F(e)  (y) \vert\, :\, e \in E\} \}$.
\end{enumerate}
Then, the pair $(X,Y)$ has the BPBp for compact operators.
\end{cor}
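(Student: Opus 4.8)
The plan is to reduce everything to the already-established theory for the pair $(c_0,Y)$, exactly paralleling the proof of Corollary~\ref{cor-beta-uniformlyconvex-C0LY} but replacing Theorem~\ref{theorem-C0Y} by Theorem~\ref{theorem-predualL1}. The starting observation is that hypotheses (1)--(5) of the statement are precisely the assumptions appearing in \cite[Theorem~2.4]{AGKM-JMAA2016}: the two families $\{y_i\}$, $\{y_i^*\}$ furnish a ``property $\beta$-like'' part governed by the constant $\rho$, while the uniformly strongly exposed set $E$ together with its exposing map $F$ furnishes the ``uniformly convex-like'' part, and the norm formula in (5) glues the two contributions together. Thus the first step is simply to invoke \cite[Theorem~2.4]{AGKM-JMAA2016} to conclude that the pair $(c_0,Y)$ has the BPBp (for all, not merely compact, operators).

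The second step is to pass from the BPBp to the BPBp for compact operators for the pair $(c_0,Y)$. This is exactly the content of the (unnamed) corollary stated immediately after Lemma~\ref{lemma-c0Y}, which asserts that if $(c_0,Y)$ has the BPBp then it has the BPBp for compact operators. Applying it gives that $(c_0,Y)$ has the BPBp for compact operators.

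The final step is to feed this into Theorem~\ref{theorem-predualL1}. Since by assumption $X^*$ is isometrically isomorphic to $\ell_1$ and we have just verified that $(c_0,Y)$ has the BPBp for compact operators, that theorem yields at once that $(X,Y)$ has the BPBp for compact operators, which is the desired conclusion.

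I do not expect any genuine obstacle here: the argument is a three-line chaining of previously established results, and no new estimate or construction is required. The only point that needs care is purely bookkeeping, namely checking that the five conditions listed (with the slightly irregular labelling, where the fourth condition is written as (iv)) correspond verbatim to the hypotheses of \cite[Theorem~2.4]{AGKM-JMAA2016}; once that identification is made, the conclusion is immediate.
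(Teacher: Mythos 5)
Your proposal is correct and follows essentially the same route as the paper, which likewise chains \cite[Theorem~2.4]{AGKM-JMAA2016}, Lemma~\ref{lemma-c0Y} (you invoke its immediate corollary, which is the same content), and Theorem~\ref{theorem-predualL1}.
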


The proof is just an application of \cite[Theorem~2.4]{AGKM-JMAA2016} to get that $(c_0,Y)$ has the BPBp, Lemma~\ref{lemma-c0Y} and Theorem~\ref{theorem-predualL1}.

Observe again that this result covers the cases of $Y$ being uniformly convex ($I=\emptyset$) and of $Y$ having property $\beta$ ($E=\emptyset$). If $Y$ has property $\beta$, the result was already known (see Examples~\ref{example-previouslyknown}.(a)), but it was unknown for uniformly convex spaces.

\begin{cor}
Let $X$ be a Banach space such that $X^*$ is isometrically isomorphic to $\ell_1$ and let $Y$ be a uniformly convex Banach space. Then $(X, Y)$ has the BPBp for compact operators.
\end{cor}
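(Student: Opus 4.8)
The plan is to obtain this statement by feeding into Theorem~\ref{theorem-predualL1} the information that $(c_0,Y)$ has the BPBp for compact operators, which is the only place where the uniform convexity of $Y$ is used; indeed, the statement is essentially the instance $I=\es$ of the previous corollary. So the first thing I would do is check that a uniformly convex $Y$ fits the hypotheses of \cite[Theorem~2.4]{AGKM-JMAA2016}: take $I=\es$ (so that all the conditions involving the vectors $y_i\in S_Y$ become vacuous), put $E=S_Y$, and let $F:S_Y\longrightarrow S_{Y^*}$ assign to each $e\in S_Y$ a norm-one supporting functional $F(e)$ with $F(e)(e)=1$, which exists by Hahn--Banach. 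The norming condition then reads $\|y\|=\sup\{|F(e)(y)|:e\in S_Y\}$, and this holds since $F(e)(e)=1$ for every $e\in S_Y$.

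The only point genuinely requiring an argument is that $E=S_Y$ is uniformly strongly exposed by $F$, and here I would run the standard averaging argument. Given $\e>0$, let $\delta_0>0$ be a modulus furnished by the uniform convexity of $Y$ for $\e$, and set $\delta=\delta_0$. If $y\in B_Y$, $e\in S_Y$ and $\re F(e)(y)>1-\delta$, then $\re F(e)\bigl(\tfrac{e+y}{2}\bigr)=\tfrac12\bigl(1+\re F(e)(y)\bigr)>1-\tfrac{\delta_0}{2}$, whence $\bigl\|\tfrac{e+y}{2}\bigr\|>1-\delta_0$; uniform convexity then forces $\|e-y\|<\e$. Since $\delta$ is independent of $e$, the exposure is uniform over all of $S_Y$. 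With all the hypotheses verified, \cite[Theorem~2.4]{AGKM-JMAA2016} yields that $(c_0,Y)$ has the BPBp.

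Next I would upgrade this to the compact setting and transfer it to $X$. As $c_0=c_0(\K)$ and $\K$ is finite-dimensional, one has $\mathcal{K}(\K,Y)=\mathcal{L}(\K,Y)$, so the ``moreover'' part of Lemma~\ref{lemma-c0Y} applies: the BPBp of $(c_0,Y)$ gives that $(c_0,Y)$ has the BPBp for compact operators. Since $X^*$ is isometrically isomorphic to $\ell_1$, Theorem~\ref{theorem-predualL1} then transfers this property to the pair $(X,Y)$, which finishes the argument. I expect no serious obstacle in any of this: the whole proof is a concatenation of results already established in the paper, and the single mildly technical ingredient is the passage from uniform convexity to uniform strong exposure of the entire sphere by supporting functionals, which is precisely the short averaging computation above, uniform exactly because the modulus of convexity is uniform.
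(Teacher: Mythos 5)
Your proposal is correct and follows essentially the same route as the paper: the corollary is obtained there as the instance $I=\emptyset$ of the preceding corollary, i.e.\ via \cite[Theorem~2.4]{AGKM-JMAA2016} applied to a uniformly convex $Y$, then Lemma~\ref{lemma-c0Y} and Theorem~\ref{theorem-predualL1}. Your averaging computation showing that $S_Y$ is uniformly strongly exposed by the support mapping is precisely the detail the paper leaves implicit.
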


Next we will give a result for $L_1(\mu,X)$-spaces as domain which is an extension of Examples~\ref{example-previouslyknown}.(i).

\begin{theorem}\label{thr-L1muX} Let $\mu$ be a positive measure, let $X$ be a Banach space such that $X^*$ has the Radon-Nikod\'{y}m property and let $Y$ be a Banach space. If $(\ell_1(X),Y)$ has the BPBp for compact operators, then the pair $(L_1(\mu,X),Y)$ has the BPBp for compact operators.
\end{theorem}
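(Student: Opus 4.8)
The plan is to deduce the result from the Main technical lemma (Lemma~\ref{Main-prop}), applied with $L_1(\mu,X)$ in the role of the domain space, where the projections $P$ will be conditional expectation operators whose ranges are isometric to $\ell_1$-sums of copies of $X$. The Radon--Nikod\'ym hypothesis enters exactly to identify the dual: since $X^*$ has the RNP, one has the isometric identification $L_1(\mu,X)^*=L_\infty(\mu,X^*)$, so that every functional in $B_{L_1(\mu,X)^*}$ is represented by an essentially bounded \emph{strongly} measurable $X^*$-valued function. This is what makes the functionals appearing in the hypothesis of Lemma~\ref{Main-prop} uniformly approximable by simple functions, which is the crux of the argument.

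First I would treat a $\sigma$-finite measure $\mu$. Fix $\delta>0$, functionals $x_1^*,\dots,x_n^*\in B_{L_\infty(\mu,X^*)}$ and $x_0\in S_{L_1(\mu,X)}$. Using strong measurability, each $x_j^*$ can be approximated within $\delta/2$ in the $L_\infty$-norm by a simple function, and $x_0$ within $\delta/2$ in the $L_1$-norm by a simple $X$-valued function; refining the countably many level sets against a $\sigma$-finite exhaustion, I can arrange a single countable measurable partition $\{A_k\}_k$ of $\Omega$ into sets of finite positive measure on which all these simple functions are constant. Let $Pf=\sum_k \frac{1}{\mu(A_k)}\bigl(\int_{A_k} f\,d\mu\bigr)\chi_{A_k}$ be the associated conditional expectation. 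Then $P$ is a norm-one projection, its range is isometric to $\ell_1(X)$ (or to $\ell_1^m(X)$ if the partition is finite) after the obvious rescaling, and its adjoint $P^*$ is the corresponding conditional expectation on $L_\infty(\mu,X^*)$. Taking $i$ to be the inclusion of $P(L_1(\mu,X))$ into $L_1(\mu,X)$, conditions (2) and (3) of Lemma~\ref{Main-prop} are immediate, since $P\circ i=\Id$ and $\|i(Px_0)-x_0\|=\|Px_0-x_0\|\le 2\|x_0-s_0\|<\delta$ as $P$ fixes the simple approximation $s_0$; condition (1) follows in the same way from $\|P^*x_j^*-x_j^*\|\le 2\|x_j^*-s_j^*\|<\delta$. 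Condition (4) holds with a single function $\eta$: the pair $(\ell_1(X),Y)$ has the BPBp for compact operators by assumption, and $(\ell_1^m(X),Y)$ has it with the same $\eta$ because $\ell_1^m(X)$ is an $\ell_1$-summand of $\ell_1(X)$ (Lemma~\ref{lemma-X-Y_1-p-Y_2}(a)). Lemma~\ref{Main-prop} then gives the BPBp for compact operators of $(L_1(\mu,X),Y)$ in the $\sigma$-finite case. Note that the net-based corollaries of Lemma~\ref{Main-prop} are \emph{not} available here, since the adjoints of these conditional expectations do not converge to the identity in the $L_\infty$-norm; the per-collection flexibility of Lemma~\ref{Main-prop} is essential.

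To pass from a $\sigma$-finite measure to an arbitrary positive measure I would use an $\ell_1$-gluing argument. Given a norm-one compact $T$ and $x_0\in S_{L_1(\mu,X)}$ with $\|Tx_0\|$ sufficiently close to $1$, let $\Omega_0$ be the ($\sigma$-finite) support of $x_0$, so that $L_1(\mu,X)=L_1(\mu|_{\Omega_0},X)\oplus_1 L_1(\mu|_{\Omega\setminus\Omega_0},X)$. The restriction $T_0$ of $T$ to the first summand is compact, has norm at most one, and satisfies $\|T_0x_0\|=\|Tx_0\|$; applying the already established $\sigma$-finite case to $T_0$ produces $S_0$ and $x_1$, and I would glue $S=S_0\oplus T|_{\Omega\setminus\Omega_0}$. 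Because we are dealing with an $\ell_1$-decomposition, $\|S\|\le 1$, the norm is attained at $x_1$ (so $\|S\|=1$), compactness is preserved, $\|x_0-x_1\|<\eps$ is inherited, and $\|S-T\|\le\|S_0-T_0\|<\eps$, which is exactly what is needed.

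The main obstacle is the measure theory underlying the $\sigma$-finite step, and it is precisely here that the hypothesis on $X^*$ is indispensable. One must know that every element of $B_{L_1(\mu,X)^*}$ has a strongly measurable, essentially bounded representative---equivalently, that $L_1(\mu,X)^*=L_\infty(\mu,X^*)$---in order to approximate the functionals $x_j^*$ \emph{uniformly} (not merely in $L_p$, $p<\infty$, where martingale convergence would be automatic) by simple functions constant on finitely or countably many sets of finite measure. Without the RNP of $X^*$ this uniform approximation, and hence condition (1) of Lemma~\ref{Main-prop}, may fail. The remaining care is bookkeeping: ensuring the partition pieces have finite measure so that the conditional expectation is well defined and norm-one and its range is genuinely an $\ell_1$-sum of copies of $X$.
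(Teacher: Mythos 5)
Your proposal is correct and follows essentially the same route as the paper: conditional-expectation projections whose ranges are isometric to $\ell_1(X)$ (or $\ell_1^m(X)$), with the Radon--Nikod\'{y}m property of $X^*$ used precisely to identify $L_1(\mu,X)^*$ with $L_\infty(\mu,X^*)$ and so control the adjoints, followed by a restriction to a $\sigma$-finite support set to handle general measures; the paper merely packages the $\sigma$-finite step through Proposition~\ref{Prop-family-projections} and Lemma~\ref{lemma-P-P*-when-mu-finite}(b) instead of invoking Lemma~\ref{Main-prop} directly, and in the general-measure step it additionally restricts to the supports of a norming sequence so as to keep $\|T_1\|=1$, whereas you rely on the $\|T\|\leq 1$ formulation of Remark~\ref{BPBpLemma}. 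Two minor caveats: your aside that the net-based corollaries are unavailable is inaccurate (indexing the net by finite collections of functions and functionals makes the adjoints converge pointwise by construction, which is exactly how the paper proceeds), and the $L_\infty(\mu,X^*)$ approximants must be countably valued rather than genuinely simple, as your use of a countable partition correctly anticipates but your wording momentarily obscures.
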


We first need the following lemma, which gives a version for compact operators of \cite[Theorem~6]{KimLeeMartin-JMAA2015}. Observe that in this case, no assumption on $X$ is needed.

\begin{lemma}\label{lemma-characterizationell1X-Y}
Let $X$, $Y$ be Banach spaces. Then the following are equivalent:
\begin{enumerate}
\item[(i)] for every $\varepsilon>0$ there exists $0<\xi(\varepsilon)<\varepsilon$ such that given sequences $(T_k)\subset B_{\mathcal{K}(X,Y)}$ and $(x_k) \subset B_X$, and a convex series $\sum_{k=1}^\infty\alpha_k$ such that
$$
\left\|\sum_{k=1}^{\infty}\alpha_k T_k x_k\right\| > 1-\xi(\varepsilon),
$$
there exist a subset $A\subset \N$ finite, $y^*\in S_{Y^*}$ and sequences $(S_k)\subset S_{\mathcal{K}(X,Y)}$, $(z_k) \subset S_X$ satisfying the following:
\begin{enumerate}
\item  $\sum_{k\in A} \alpha_k>1-\varepsilon$,
\item $ \|z_k-x_k\|<\varepsilon$ and $ \|S_k-T_k\|<\varepsilon$ for all $k\in A$,
\item $y^*(S_kz_k)=1$ for every $k\in A$.
\end{enumerate}
(in this case, we may say that the pair $(X,Y)$ has the \emph{generalized AHSP for compact operators});
\item[(ii)] the pair $(\ell_1(X),Y)$ has the BPBp for compact operators;
\item[(iii)] there is a function $\eta:\R^+\longrightarrow \R^+$ such that the pairs $(\ell_1^{m}(X),Y)$ with $m\in \N$ have the BPBp for compact operators with the function $\eta$.
\end{enumerate}
Moreover, if $\mathcal{K}(X,Y)=\mathcal{L}(X,Y)$ (in particular,
if one of the spaces $X$ or $Y$ is finite-dimensional), then the above is equivalent to
\begin{enumerate}
\item[(iv)] the pair $(\ell_1(X),Y)$ has the BPBp.
\end{enumerate}
\end{lemma}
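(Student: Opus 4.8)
The plan is to prove the three-way equivalence through the cycle $(ii)\Rightarrow(iii)\Rightarrow(i)\Rightarrow(ii)$, handling the ``Moreover'' clause afterwards by reduction to the non-compact statement \cite[Theorem~6]{KimLeeMartin-JMAA2015}. Throughout I would use the standard dictionary for the $\ell_1$-sum: an operator $T\in\mathcal{K}(\ell_1(X),Y)$ is determined by the sequence of its restrictions $T_k:=T\circ\iota_k\in\mathcal{K}(X,Y)$ to the canonical copies of $X$, with $\|T\|=\sup_k\|T_k\|$, and a point $x_0\in S_{\ell_1(X)}$ decomposes as $x_0=\sum_k\alpha_k\,\iota_k(x_k)$, where $\alpha_k=\|x_0(k)\|$ is a convex series and $x_k\in S_X$ whenever $\alpha_k>0$, so that $\|Tx_0\|=\bigl\|\sum_k\alpha_k T_kx_k\bigr\|$. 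The crucial asymmetry I must respect is that a \emph{finite} family of compact operators assembles into a compact operator on $\ell_1^m(X)$, whereas an infinite family need not; this is exactly why I route the argument through $(iii)$ instead of attempting $(ii)\Rightarrow(i)$ directly, since the latter would require the non-compact assembly $\sum_k T_k\pi_k$ to be compact.

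For $(ii)\Rightarrow(iii)$ I would simply note that each $\ell_1^m(X)$ is an $\ell_1$-summand of $\ell_1(X)$ and invoke Lemma~\ref{lemma-X-Y_1-p-Y_2}(a), which transfers the BPBp for compact operators to the summand \emph{with the same function} $\eta$, yielding the required uniform $\eta$. For $(i)\Rightarrow(ii)$ I start from a norm-one $T$ and $x_0\in S_{\ell_1(X)}$ with $\|Tx_0\|>1-\eta(\dots)$, read off $(T_k)\subset B_{\mathcal{K}(X,Y)}$, $(x_k)\subset S_X$ and the convex series $(\alpha_k)$, and feed them into $(i)$ for a suitable $\xi$. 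The output $A$, $y^*$, $(S_k)$, $(z_k)$ lets me define $S\in\mathcal{K}(\ell_1(X),Y)$ whose $k$-th component is $S_k$ for $k\in A$ and $T_k$ otherwise; since $S-T$ is supported on the finite set $A$ it is compact, hence so is $S$, and $\|S\|=1$ because $\|S_k\|=1$ on $A$ while the untouched components have norm $\le 1$. Taking $x_1=\sigma^{-1}\sum_{k\in A}\alpha_k\iota_k(z_k)$ with $\sigma=\sum_{k\in A}\alpha_k$, the functional $y^*$ forces $y^*(Sx_1)=1$, so $\|S\|=\|Sx_1\|=1$, and the estimates on $\|x_0-x_1\|$ and $\|S-T\|$ then follow from $\sigma>1-\varepsilon$, $\|z_k-x_k\|<\varepsilon$ and $\|S_k-T_k\|<\varepsilon$ by the $\ell_1$-triangle inequality after absorbing the $\sigma^{-1}$ renormalization.

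The substantive direction is $(iii)\Rightarrow(i)$. Given $(T_k)\subset B_{\mathcal{K}(X,Y)}$, $(x_k)\subset B_X$ and a convex series with $\bigl\|\sum_k\alpha_kT_kx_k\bigr\|>1-\xi(\varepsilon)$, I would truncate: choosing $m$ with $\sum_{k>m}\alpha_k$ small and $\xi(\varepsilon)$ small enough, the finite assembly $\hat T=\sum_{k\le m}T_k\pi_k\in\mathcal{K}(\ell_1^m(X),Y)$ is compact with $\|\hat T\|\le 1$, and $\hat x_0=(\alpha_kx_k)_{k\le m}\in B_{\ell_1^m(X)}$ satisfies $\|\hat T\hat x_0\|>1-\eta(\varepsilon')$ for an appropriate $\varepsilon'$. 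Applying the BPBp for compact operators of $(\ell_1^m(X),Y)$ with $\eta$ (in the norm-$\le 1$ form of Remark~\ref{BPBpLemma}(b)) produces $\hat S$ and $\hat x_1$ with $\|\hat S\|=\|\hat S\hat x_1\|=1$, $\|\hat S-\hat T\|<\varepsilon'$ and $\|\hat x_0-\hat x_1\|<\varepsilon'$. Writing $y^*$ for a norming functional of $\hat S\hat x_1$ and examining the equality case of $1=y^*(\hat S\hat x_1)=\sum_k y^*(\hat S_k\hat x_{1,k})\le\sum_k\|\hat x_{1,k}\|=1$, I obtain for each $k$ with $\hat x_{1,k}\neq0$ that $\|\hat S_k\|=1$ and $y^*(\hat S_k z_k)=1$ for the unit vector $z_k=\hat x_{1,k}/\|\hat x_{1,k}\|$; I set $S_k=\hat S_k$ there, the component distances $\|S_k-T_k\|<\varepsilon'$ coming from $\|\hat S-\hat T\|=\max_{k\le m}\|\hat S_k-T_k\|$.

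I expect the main obstacle to be verifying the \emph{per-index} closeness $\|z_k-x_k\|<\varepsilon$ together with $\sum_{k\in A}\alpha_k>1-\varepsilon$, since $(i)$ demands these for every index of a finite set $A$ while the hypotheses only control weighted averages. The remedy is a pruning argument: from $\sum_k\alpha_k\|x_k\|>1-\xi$ the indices with $\|x_k\|$ far from $1$ carry $\alpha$-weight $O(\sqrt{\xi})$, and from $\sum_{k\le m}\|\hat x_{1,k}-\alpha_kx_k\|<\varepsilon'$ the indices whose ratio $\delta_k/\alpha_k$, with $\delta_k=\|\hat x_{1,k}-\alpha_kx_k\|$, exceeds $\sqrt{\varepsilon'}$ also carry $\alpha$-weight $O(\sqrt{\varepsilon'})$. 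Defining $A$ to be the indices $k\le m$ with $\hat x_{1,k}\neq0$, $\|x_k\|\geq1-\sqrt{\xi}$ and $\delta_k/\alpha_k\le\sqrt{\varepsilon'}$, the elementary bound $\|z_k-x_k\|\le 2\delta_k/\alpha_k+(1-\|x_k\|)$ yields the per-index estimate, while the weight lost in passing to $A$ is $O(\sqrt{\xi}+\sqrt{\varepsilon'})$, so that choosing $\xi(\varepsilon)$ small enough (and $m$ large enough) closes all estimates at once. Finally, for the ``Moreover'' clause I would observe that when $\mathcal{K}(X,Y)=\mathcal{L}(X,Y)$ every operator out of the finite sum $\ell_1^m(X)$ is compact, so condition $(iii)$ coincides with its non-compact analogue, and the equivalence of the latter with $(iv)$ is precisely \cite[Theorem~6]{KimLeeMartin-JMAA2015}.
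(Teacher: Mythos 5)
Your proposal is correct and follows essentially the same route as the paper: the same cycle of implications, with (ii)$\Rightarrow$(iii) via Lemma~\ref{lemma-X-Y_1-p-Y_2}.(a) and the other two implications being the adaptation of \cite[Theorem~6]{KimLeeMartin-JMAA2015} to compact operators (finiteness of $A$ making the corrected operator compact, and truncation to $\ell_1^m(X)$ replacing the series), which is exactly what the paper invokes. The only difference is that you spell out the details of that adaptation (the equality-case and pruning arguments) where the paper simply cites them.
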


\begin{proof}
(i) implies (ii). We can adapt the proof of Theorem~6 of \cite{KimLeeMartin-JMAA2015} to the case of compact operators since, as we suppose the set $A$ to be finite and all the operators $T_k$'s and $S_k$'s to be compact, the operator $S:\ell_1(X)\longrightarrow Y$ defined there is also compact. (ii) implies (iii) follows from Lemma~\ref{lemma-X-Y_1-p-Y_2}.(a), as each $\ell_1^{m}(X)$ is an $\ell_1$-summand in $\ell_1(X)$. Finally, for (iii) implies (i), we can again adapt the proof of \cite[Theorem~6]{KimLeeMartin-JMAA2015} to the case of compact operators, using that in our item (i) we may reduce to finite sums instead of series (using the analogous for compact operators of \cite[Remark~5.a]{KimLeeMartin-JMAA2015}).

If $\mathcal{K}(X,Y)=\mathcal{L}(X,Y)$, item (iii) is equivalent to the fact that all the pairs $(\ell_1^m(X),Y)$ with $m\in \N$ have the BPBp with the same function $\eta$. Then, it is shown in  \cite[Theorem~6]{KimLeeMartin-JMAA2015} that this is equivalent to (iv).
\end{proof}

In particular, we have the following characterization of when the pairs of the form $(\ell_1,Y)$ has the BPBp for compact operators.

\begin{cor}\label{cor:characAHSP}
Let $Y$ be a Banach space. Then, the following are equivalent:
\begin{enumerate}
\item[(i)] the pair $(\ell_1,Y)$ has the BPBp for compact operators;
\item[(ii)] $Y$ has the AHSP;
\item[(iii)] the pair $(\ell_1,Y)$ has the BPBp;
\item[(iv)] for every positive measure $\mu$, the pair $(L_1(\mu),Y)$ has the BPBp for compact operators;
\item[(v)] there is a positive measure $\mu$ such that $L_1(\mu)$ is infinite-dimensional and the pair $(L_1(\mu),Y)$ has the BPBp for compact operators.
\end{enumerate}
\end{cor}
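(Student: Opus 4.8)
The plan is to close the cycle of equivalences by combining the tools already developed with a single genuinely new argument. The implications among (i), (ii), (iii) are immediate: applying Lemma~\ref{lemma-characterizationell1X-Y} with $X=\K$ (so that $\ell_1(\K)=\ell_1$ and $\mathcal{K}(\K,Y)=\mathcal{L}(\K,Y)=Y$, whence the hypothesis $\mathcal{K}(X,Y)=\mathcal{L}(X,Y)$ holds automatically) gives at once (i)$\,\Leftrightarrow\,$(iii), since item~(ii) of that lemma is precisely our (i) and item~(iv) is precisely our (iii). The equivalence (ii)$\,\Leftrightarrow\,$(iii) is the characterization of the AHSP recorded after Definition~\ref{defAHSP}, namely \cite[Theorem~4.1]{AAGM}. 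For (i)$\,\Rightarrow\,$(iv) I would invoke Theorem~\ref{thr-L1muX} with $X=\K$: here $X^*=\K$ trivially has the Radon--Nikod\'ym property, $\ell_1(\K)=\ell_1$ and $L_1(\mu,\K)=L_1(\mu)$, so (i) yields that $(L_1(\mu),Y)$ has the BPBp for compact operators for every positive measure $\mu$. Finally (iv)$\,\Rightarrow\,$(v) is trivial: take, say, Lebesgue measure on $[0,1]$, for which $L_1(\mu)$ is infinite-dimensional.

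The whole statement thus reduces to the one substantial implication (v)$\,\Rightarrow\,$(ii), which I would prove by verifying the AHSP of $Y$ directly. Fix the measure $\mu$ provided by (v), with BPBp-for-compact function $\eta$. Given $\e>0$, a sequence $(y_k)\subset S_Y$ and a convex series $\sum_k\alpha_k$ with $\bigl\|\sum_k\alpha_k y_k\bigr\|>1-\eta'$ for a suitably small $\eta'=\eta'(\e,\eta)$, I would exploit that $L_1(\mu)$ is infinite-dimensional to pick pairwise disjoint measurable sets $A_k$ with $0<\mu(A_k)<\infty$ and set $h_k=\chi_{A_k}/\mu(A_k)\in S_{L_1(\mu)}$; these have disjoint supports, so $\overline{\spann\{h_k\}}$ is isometric to $\ell_1$. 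Truncating at an index $N$ with small tail, I would then define the finite-rank (hence compact) operator $Tf=\sum_{k=1}^N\bigl(\int_{A_k}f\,d\mu\bigr)y_k$, which has $\|T\|=1$ and $Th_k=y_k$, together with the point $x_0=\sum_{k=1}^N\alpha_k h_k\in B_{L_1(\mu)}$ satisfying $\|Tx_0\|=\bigl\|\sum_{k\le N}\alpha_k y_k\bigr\|>1-\eta(\e/2)$ once the tail $\sum_{k>N}\alpha_k$ is absorbed into the constants.

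Applying the BPBp for compact operators of $(L_1(\mu),Y)$ in the norm-$\leq 1$ form of Remark~\ref{BPBpLemma} produces a compact $S$, a point $x_1\in S_{L_1(\mu)}$ and, taking $y^*\in S_{Y^*}$ with $y^*(Sx_1)=\|Sx_1\|=1$, the functional $x_1^*:=S^*y^*\in S_{L_\infty(\mu)}$ with $x_1^*(x_1)=\|x_1\|_1=1$, while $\|x_1-x_0\|<\e'$ and $\|S-T\|<\e'$. The decisive $L_1$-feature is that $x_1^*(x_1)=\int x_1\,x_1^*\,d\mu=\int|x_1|\,d\mu$ forces $x_1^*=\overline{\sgn(x_1)}$ a.e.\ on $\{x_1\neq0\}$. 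I would then let $A$ be the set of indices $k\le N$ for which $x_1$ carries positive mass on $A_k$ with phase close to that of $x_0$ (which is real and nonnegative); a routine estimate from $\|x_1-x_0\|<\e'$ shows that the complementary indices carry $\alpha$-mass at most a small multiple of $\e'$, so $\sum_{k\in A}\alpha_k>1-\e$. For $k\in A$ I would set $z_k:=S(h_k')$ with $h_k'=\sgn(x_1)\,\chi_{E_k}/\mu(E_k)\in S_{L_1(\mu)}$, where $E_k\subset A_k$ is the positive-measure set on which $x_1\neq0$ and its phase is near $1$. Then $y^*(z_k)=x_1^*(h_k')=\frac{1}{\mu(E_k)}\int_{E_k}\sgn(x_1)\,\overline{\sgn(x_1)}\,d\mu=1$ exactly, hence $\|z_k\|=1$; moreover $Th_k'=\lambda_k y_k$ with $\lambda_k$ close to $1$, so $\|z_k-y_k\|\le\|S-T\|\,\|h_k'\|+|\lambda_k-1|\,\|y_k\|<\e$. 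This is exactly the data $A$, $y^*$, $(z_k)$ demanded by Definition~\ref{defAHSP}, so $Y$ has the AHSP.

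The main obstacle is precisely producing the \emph{exact} equality $y^*(z_k)=1$ while keeping $\|z_k-y_k\|<\e$: approximate norm attainment of $y^*$ at $y_k$ cannot be upgraded to exact attainment in a general $Y$, and one cannot simply use $S(h_k)$ because $x_1^*$ need not equal $1$ on all of $A_k$. The resolution is the $L_1$-specific identity $x_1^*=\overline{\sgn(x_1)}$ on the support of $x_1$, which manufactures the exact equality upon testing $S$ against the phase-corrected, renormalized indicator $h_k'$ supported where $x_1$ genuinely lives; the bookkeeping that simultaneously controls the $\alpha$-mass of $A$ and the closeness $\|z_k-y_k\|<\e$ is then routine but is where all the care must go, particularly in the complex case, where the phase $\sgn(x_1)$ must be tracked throughout.
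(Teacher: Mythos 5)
Your proposal is correct, and its skeleton --- (i)$\Leftrightarrow$(iii) via Lemma~\ref{lemma-characterizationell1X-Y} with $X=\K$, (ii)$\Leftrightarrow$(iii) via \cite[Theorem~4.1]{AAGM}, a chain into (iv) and (v), and a closing implication (v)$\Rightarrow$(ii) --- is the same cycle the paper uses. There are two divergences. For the step into (iv), the paper goes (ii)$\Rightarrow$(iv) by citing Examples~\ref{example-previouslyknown}.(i), whereas you go (i)$\Rightarrow$(iv) by specializing Theorem~\ref{thr-L1muX} to $X=\K$; this is legitimate and non-circular, and indeed the paper itself remarks after that theorem that the scalar case recovers Examples~\ref{example-previouslyknown}.(i). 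The substantive difference is (v)$\Rightarrow$(ii): the paper disposes of it in one line by citing \cite[Corollary~2.4]{ABGKM-JMAA2014}, while you reprove it from scratch. Your direct argument is sound and is essentially the mechanism behind the cited result: the identity $S^*y^*=\overline{\sgn(x_1)}$ a.e.\ on $\{x_1\neq 0\}$ is exactly the $L_1$-feature that upgrades approximate to exact norm attainment, and a Chebyshev-type selection of $A$ from $\|x_1-x_0\|_1<\e'$ (keeping the indices $k$ with $\int_{A_k}|x_1-x_0|\,d\mu$ small relative to $\alpha_k$) does yield $\sum_{k\in A}\alpha_k>1-\e$ together with sets $E_k\subset A_k$ of positive measure on which the phase of $x_1$ is near $1$. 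What your route buys is self-containedness, and it makes visible why only the infinite-dimensionality of $L_1(\mu)$ matters in (v); what it costs is the bookkeeping you acknowledge, plus two small points worth making explicit: when $\mu$ is not $\sigma$-finite, the representation of $S^*y^*$ as an $L_\infty$-function must be taken on the ($\sigma$-finite) union of $\supp(x_1)$ with the $A_k$'s rather than on all of $\Omega$; and indices with $\alpha_k=0$ should be discarded before the threshold argument defining $A$, since for those one cannot (and need not) control the phase of $x_1$ on $A_k$.
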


\begin{proof}
(ii) is equivalent to (iii) by \cite[Theorem~4.1]{AAGM}. (i) is equivalent to (iii) by Lemma~\ref{lemma-characterizationell1X-Y}. (ii) implies (iv) is Examples~\ref{example-previouslyknown}.(i). (iv) implies (v) is obvious. Finally, (v) implies (ii) is proved in \cite[Corollary~2.4]{ABGKM-JMAA2014}.
\end{proof}

We will also need the following modification of \cite[Lemma~III.2.1, p.~67]{Diestel-Uhl-book}. For $1\leq p <\infty$, let $p^*$ be the conjugate exponent, i.e.\ $p^*=\infty$ for $p=1$ and $p^*$ is determined by the equation $1/p+1/p^*=1$ for $1<p<\infty$.

\begin{lemma}\label{lemma-P-P*-when-mu-finite}
Let $(\Omega, \Sigma, \mu)$ be a measure space such that $L_1(\mu)$ is infinite-dimensional, let $X$ be a Banach space and  $\e > 0$.
\begin{enumerate}
  \item[(a)]  For $1\leq p<\infty$, given $f_1,\ldots,  f_n$ in $L_p(\mu, X)$ there exists a norm-one projection $P:L_p(\mu, X)\longrightarrow L_p(\mu, X)$ such that $P(L_p(\mu, X))$ is isometrically isomorphic to $\ell_p(X)$ and
      $$\|P(f_j)-f_j\|<\e,$$ for every $j=1,\ldots, n$.
  \item[(b)] If $\mu$ is a finite measure then, for $1\leq p< \infty$,  $f_1,\ldots,  f_n$ in $L_p(\mu, X)$, $g_1,\ldots,  g_m$ in $L_{p^*}(\mu, X^*)$ and $\e>0$, there exists a norm-one projection $P:L_p(\mu, X)\longrightarrow L_p(\mu, X)$ such that $P(L_p(\mu, X))$ is isometrically isomorphic to $\ell_p(X)$ and
       such that
\begin{equation*}
 \|f_j - P f_j\|_p < \e	 \quad \ \mbox{and} \quad \  \| g_k - P^* g_k\|_{p^*} < \e,
	\end{equation*}
for all $j = 1, \ldots, n$ and $k = 1, \ldots, m$.	
  \item[(c)] If $\mu$ is a finite measure then, given $g_1,\ldots,  g_m$ in $L_\infty(\mu, X)$ and $\e>0$ there exists a norm-one projection $P:L_\infty(\mu, X)\longrightarrow L_\infty(\mu, X)$ such that $P(L_\infty(\mu, X))$ is isometrically isomorphic to $\ell_\infty(X)$ and
       such that
\begin{equation*}
  \| g_k - P g_k\|_{\infty} < \e,
	\end{equation*}
for all $k = 1, \ldots, m$.
\end{enumerate}
\end{lemma}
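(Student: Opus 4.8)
The plan is to realize the required projections as conditional-expectation operators associated with a countable measurable partition of $\Omega$ into pieces of positive finite measure. Concretely, given pairwise disjoint measurable sets $\{E_k\}_{k\geq 1}$ with $0<\mu(E_k)<\infty$, I set
\[
Pf=\sum_{k\geq 1}\left(\frac{1}{\mu(E_k)}\int_{E_k}f\,d\mu\right)\chi_{E_k}\qquad (f\in L_p(\mu,X)).
\]
Jensen's inequality on each piece gives $\|Pf\|_p\leq \|f\|_p$, while $P$ clearly fixes every function that is constant on each $E_k$ and vanishes off $\bigcup_k E_k$; hence $P$ is a norm-one projection. For $1\leq p<\infty$ its range is exactly the space of such piecewise-constant functions, and the reparametrization $x_k\leftrightarrow \mu(E_k)^{1/p}x_k$ identifies it isometrically with $\ell_p(X)$, precisely because there are countably \emph{infinitely} many pieces (in the $L_\infty$ case of (c) the range carries the supremum norm and is identified with $\ell_\infty(X)$ directly, with no weights). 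The assumption that $L_1(\mu)$ is infinite-dimensional is exactly what guarantees the existence of infinitely many pairwise disjoint sets of positive finite measure, and so legitimizes the infinite index set.

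For (a) I would first use density of simple functions to approximate each $f_j$ within $\e/2$ in $L_p(\mu,X)$ by $s_j=\sum_{i=1}^N x_{ij}\chi_{A_i}$ with $A_1,\dots,A_N$ pairwise disjoint of positive finite measure (finite-measure support is available since $f_j$ is supported on a $\sigma$-finite set for $p<\infty$). Adjoining to $\{A_i\}$ a sequence of further pairwise disjoint finite-measure sets (available by infinite-dimensionality of $L_1(\mu)$) yields a countable family $\{E_k\}$ on whose pieces each $s_j$ is constant; thus $Ps_j=s_j$, and since $\|P\|\leq 1$,
\[
\|Pf_j-f_j\|\leq \|P(f_j-s_j)\|+\|f_j-s_j\|\leq 2\|f_j-s_j\|<\e.
\]

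For (b) and (c), where $\mu$ is finite so that every countable partition of $\Omega$ automatically has finite-measure pieces, the new point is control of the adjoint. A direct Fubini-type computation shows that, under the duality $\langle g,f\rangle=\int_\Omega\langle g(\omega),f(\omega)\rangle\,d\mu$, one has for $g\in L_{p^*}(\mu,X^*)$
\[
P^*g=\sum_{k\geq 1}\left(\frac{1}{\mu(E_k)}\int_{E_k}g\,d\mu\right)\chi_{E_k},
\]
so that $P^*$ preserves $L_{p^*}(\mu,X^*)$ and acts there as the conditional expectation for the same partition (no Radon--Nikod\'ym assumption is needed for this, as it is a statement about a distinguished subspace of $L_p(\mu,X)^*$). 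Hence I need a single countable partition simultaneously approximating the $f_j$ in $L_p$ and the $g_k$ in $L_{p^*}$; I obtain it by taking a common refinement of the partitions produced for each family separately and, if the refinement is finite, splitting one piece to retain an infinite index set, noting that refining a partition only improves both approximations. When $p>1$, so $p^*<\infty$, both approximations come from density of simple functions exactly as in (a).

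The delicate case, and the main obstacle, is approximation in $L_\infty$, required for $g_k\in L_\infty(\mu,X^*)$ when $p=1$ in (b) and for the $g_k\in L_\infty(\mu,X)$ in (c), since simple functions are \emph{not} dense in $L_\infty$. Here I would exploit strong measurability: each such function is essentially separably valued, so its essential range can be covered by countably many balls of radius $\e/3$ centered at points $s_l$, and the induced measurable partition has the property that each $g_k$ oscillates by less than $\e$ on every piece. On such a partition the averaged value $\frac{1}{\mu(E_k)}\int_{E_k}g_k\,d\mu$ lies within $\e/3$ of the relevant center, hence within $2\e/3<\e$ of $g_k(\omega)$ for a.e.\ $\omega\in E_k$, giving $\|g_k-P^*g_k\|_\infty<\e$ (respectively $\|g_k-Pg_k\|_\infty<\e$ in (c)). Combining this partition with the $L_1$-partition for the $f_j$ via a common refinement, and splitting a piece if necessary to keep an infinite index set, completes (b) and (c); the norm-one, idempotency and isometric-range assertions are then verified exactly as in the first paragraph.
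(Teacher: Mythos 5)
Your proposal is correct and follows essentially the same route as the paper's proof: a conditional-expectation projection $P(f)=\sum_i\frac{1}{\mu(B_i)}\bigl(\int_{B_i}f\,d\mu\bigr)\chi_{B_i}$ built over a countable refinement of partitions adapted to simple (or countably-valued) approximants of the given functions, with the range identified with $\ell_p(X)$ via the weights $\mu(B_i)^{1/p}$. The only cosmetic differences are that you prove the uniform approximation of essentially bounded strongly measurable functions by countably-valued ones directly from essential separable-valuedness (the paper cites \cite[p.~97]{Diestel-Uhl-book}) and you identify $P^*$ as the conditional expectation on all of $L_{p^*}(\mu,X^*)$ rather than only verifying $P^*\eta_k=\eta_k$ for the chosen approximants.
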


 \begin{proof}
Since $L_1(\mu)$ is infinite-dimensional, there exists a sequence $\mathcal{A}=(A_i)_{i=1}^\infty$ of pairwise disjoint measurable sets such that $0<\mu(A_i)<\infty$ for every $i\in \N$.

(a). Fix $1\leq p<\infty$. Let $f_1,\ldots,  f_n$ in $L_p(\mu, X)$. As, by definition, simple functions are dense in $L_p(\mu, X)$, for each $j\in\{1,\ldots,n\}$ we can find a finite family $\mathcal{A}_j$ of pairwise disjoint measurable sets of positive and finite measure, and $x_A\in X$ for every $A\in \mathcal{A}_j$ such that the vector-valued simple function     $\phi_j=\sum_{A \in \mathcal{A}_j}x_A\chi_A$ satisfies
$$\|f_j-\phi_j\|_p<\e$$
for every $j = 1, \ldots, n$.
Define $$\Omega_1=\left[\bigcup_{i=1}^\infty A_i\right]\cup\left[\bigcup_{j=1}^n\bigcup_{A\in \mathcal{A}_j}A\right]$$ and let consider $\mathcal{B}=\{B_i:\, i\in \N\}$ an infinite countable partition of $\Omega_1$ such that all their elements are measurable with $0<\mu(B_i)<\infty$ for every $i\in \N$ and such that $\mathcal{B}$ is a refinement of the above families. In the case $p=1$, clearly
$$
\sum_{i=1}^{\infty}\left\|\frac{1}{\mu(B_i)}\Big(\int_{B_i}f d\mu\Big)\, \chi_{B_i}\right\|_1= \sum_{i=1}^{\infty}\left\|\int_{B_i}f d\mu\right\|\leq \|f\|_1
$$
for every $f$ in $L_1(\mu, X)$. For $1<p<\infty$, let us observe that for any $r,s\in \N$ with $1\leq r< s$, the fact that $\mathcal{B}$ is a partition leads to
$$
\left\|\sum_{i=r}^{s}\frac{1}{\mu(B_i)}\Big(\int_{B_i}f d\mu\Big) \chi_{B_i}(t)\right\|^p= \sum_{i=r}^{s}\frac{1}{\mu(B_i)^p}\left\|\int_{B_i}f d\mu\right\|^p\chi_{B_i}(t)
$$
for every $t$ in  $\Omega$. Hence
\begin{align*}
\left\|\sum_{i=r}^{s}\frac{1}{\mu(B_i)}\Big(\int_{B_i}f d\mu\Big)\chi_{B_i}\right\|_p^p & =
\int_\Omega\Big( \sum_{i=r}^{s}\frac{1}{\mu(B_i)^p}\left\|\int_{B_i}f d\mu\right\|^p\chi_{B_i}\Big)d\mu
\\ &=\sum_{i=r}^{s}\frac{1}{\mu(B_i)^p}\left\|\int_{B_i}f d\mu\right\|^p\int_{\Omega}\chi_{B_i}d\mu.
\end{align*}
But
\begin{align*}
  \sum_{i=1}^{\infty}\frac{1}{\mu(B_i)^p}\left\|\int_{B_i}f d\mu\right\|^p\int_{\Omega}\chi_{B_i}d\mu
   & =\sum_{i=1}^{\infty}\mu(B_i)^{1-p}\left\|\int_{B_i}f d\mu\right\|^p \\
  &\leq  \sum_{i=1}^{\infty}\mu(B_i)^{1-p}\int_{B_i}\|f\|^p d\mu\left(\int_{\Omega}\chi_{B_i}d\mu\right)^{\frac{p}{p^*}}
   \\ & = \sum_{i=1}^{\infty}\int_{B_i}\|f\|^pd\mu\leq \|f\|_p^p
\end{align*}
for every $k$.

Thus, for $1\leq p<\infty$, we can define  $P:L_p(\mu, X)\longrightarrow L_p(\mu, X)$ by
$$P(f)=\sum_{i=1}^{\infty}\frac{1}{\mu(B_i)}\Big(\int_{B_i}f d\mu\Big)\,\chi_{B_i} \qquad \bigl(f\in L_p(\mu,X)\bigr).
$$
As, obviously, $P(\chi_{B_i})=\chi_{B_i}$ for every $i$, we have that  $P$ is a norm-one projection such that $P(L_p(\mu, X))$ is isometrically isomorphic to $\ell_p(X)$. Moreover, for each $j\in \{1,\ldots,n\}$, there exists a sequence $(x_{ij})_{i\in\N}$ in $X$ such that
    $$
    \phi_j=\sum_{i=1}^{\infty}x_{ij}\chi_{B_i},
    $$
where the equality holds both pointwise and with respect to the $p$-norm. This implies that
      $$\|P(f_j)-f_j\|_p=\|\phi_j-f_j\|_p<\e,$$ for every $j=1,\ldots, n$.

(b). If we now assume that $\mu$ is a finite measure, the sequence  $\mathcal{A}=(A_i)_{i=1}^\infty$ of pairwise disjoint sets of positive measure can be assumed to be a partition of $\Omega$. As above, given $f_1,\ldots,  f_n$ in $L_p(\mu, X)$, we can find $\phi_1,\ldots,\phi_n$ simple functions $\phi_j=\sum_{A \in \mathcal{A}_j}x_A\chi_A$, such  that
     $$\|f_j-\phi_j\|_p<\e,$$
for every $j = 1, \ldots, n$, where now each $\mathcal{A}_j$ is a partition of $\Omega$ of measurable sets of positive measure. Let us take $g_1,\ldots,  g_m$ in $L_{p^*}(\mu, X^*)$.
We distinguish two subcases again. If $p>1$, then, by definition of $L_{p^*}(\mu, X^*)$, for each $j\in\{1,\ldots,m\}$, we can find a finite family $\mathcal{C}_j$ of pairwise disjoint measurable sets of positive and finite measure, and $x^*_C\in X^*$ such that the vector-valued simple function  $\eta_k=\sum_{C \in \mathcal{C}_k}x^*_C\chi_C$ satisfies that
$$
\|g_k-\eta_k\|_{q^*}<\e
$$
for every $k = 1, \ldots, m$. Observe that, since $\mu$ is finite, by adding a suitable null characteristic function, we may and do assume that each $\mathcal{C}_k$ is actually a measurable partition of $\Omega$. If $p=1$, then $p^*=\infty$. In that case,    by \cite[p.~97]{Diestel-Uhl-book},  there exists a measurable, bounded and countably valued mapping $\eta_k:\Omega \longrightarrow X^*$ such that
$$
\|g_k-\eta_k\|_\infty<\frac{\e}{2}
$$
for every $k = 1, \ldots, m$. Thus, again there exists a countable partition  $\mathcal{C}_k$ of $\Omega$ and vectors $x^*_C\in X^*$  such that $\eta_k(t)=\sum_{C \in \mathcal{C}_k}x^*_C\chi_C(t)$, for every $t \in \Omega$.

In both cases, we can  find $\mathcal{B}=(B_i)_{i=1}^\infty$ a partition of $\Omega$ of sets of positive measure that is a refinement of all the partitions $\mathcal{A}$, $\mathcal{A}_j$, $\mathcal{C}_k$ for every $j$ and $k$. As in (a), if we define  $P:L_p(\mu, X)\longrightarrow L_p(\mu, X)$ by
$$P(f)=\sum_{i=1}^{\infty}\frac{1}{\mu(B_i)}\Big(\int_{B_i}f d\mu\Big)\, \chi_{B_i},$$
we have that  $P$ is a norm-one projection such that $P(L_p(\mu, X))$ is isometrically isomorphic to $\ell_p(X)$ and
      $$\|P(f_j)-f_j\|_p=\|\phi_j-f_j\|_p<\e,$$ for every $j=1,\ldots, n$.
Furthermore, $L_{p^*}(\mu, X^*)$ is isometrically isomorphic to a subspace of $L_p(\mu, X)^*$ (see e.g.\ \cite[p.~97]{Diestel-Uhl-book}), and
\begin{align*}
\bigl[P^*(x^*\chi_{B_r})\bigr](f)=&\int_{\Omega}x^*\bigl(P(f)(t)\bigr) \chi_{B_r}(t)\,d\mu(t) \\&
=\int_{\Omega}\sum_{i=1}^{\infty}\frac{1}{\mu(B_i)}x^*\left(\int_{B_i}f d\mu\right)\, \chi_{B_i}(t)\chi_{B_r}(t)\,d\mu(t)
\\=& \frac{1}{\mu(B_r)}x^*\left(\int_{B_r}f d\mu\right)\,\int_{\Omega} \chi_{B_r}(t)\,d\mu(t)=\bigl[x^*\chi_{B_r}\bigr](f)
\end{align*}
for every $f$ in  $L_p(\mu, X)$, every $x^*$ in $X^*$  and every $r$. We  know that there exists $x_{ik}^*\in X^*$ such that $\eta_k(t)=\sum_{i=1}^\infty x^*_{ik}\chi_{B_i}(t)$, pointwise in $\Omega$ and convergent with the $\|\cdot\|_{p^*}$-norm for $1<p<\infty$. Hence, for $1<p<\infty$, we obtain that  $P^*(\eta_k)=\eta_k$ for every $k$. For $p=1$ the equality holds too. But we need to do some extra work to prove it. We have
$$
\left[P^*\bigl(\sum_{r=1}^lx_{rk}^*\chi_{B_r}\bigr)\right](f) =\sum_{r=1}^lx_{rk}^*\left(\int_{B_r} f(t)\, d \mu(t)\right)= \int_{\Omega}\sum_{r=1}^lx_r^*(f(t))\chi_{B_r}(t)\, d \mu(t).
$$
But
$$\sum_{r=1}^l|x_{rk}^*(f(t))\chi_{B_r}(t)|\leq \sum_{r=1}^l\|x_{rk}^*\|\|f(t)\|\chi_{B_r}(t)\leq M\|f(t)\|,$$
for every $t$ in $\Omega$ and every $l\in \N$, where $M:=\max\{\|g_1\|_\infty,\ldots,\|g_m\|_\infty\}+\e$.
Thus the series $\sum_{r=1}^\infty x_{rk}^*(f(t))\chi_{B_r}(t)$ is absolutely convergent for every $f$ and every $t$, and we obtain that
$$
\eta_k(t)((f)(t))=\sum_{r=1}^\infty x_{rk}^*(f(t))\chi_{B_r}(t).
$$
Moreover, by the Lebesgue Dominated Theorem
\begin{align*}
\bigl[\eta_k](f)=\int_\Omega \eta_k(f(t))\,d \mu(t)= \int_\Omega  \sum_{r=1}^\infty x_{rk}^*(f(t))\chi_{B_r}(t)\, d \mu(t)
= \sum_{r=1}^\infty \int_\Omega x_{rk}^*(f(t))\chi_{B_r}(t)\, d \mu(t),
\end{align*}
for all $f$ in $L_1(\mu,X)$.
On the other hand, since the series defining $P$ converges in the $\|.\|_1$-norm,
\begin{align*}
\bigl[P^*(\eta_k)\bigr](f)=& \sum_{i=1}^\infty \frac{1}{\mu(B_i)}
\bigl[\eta_k]\left(\left(\int_{B_i}f(u)\, d\mu(u)\right)\chi_{B_i}\right)=
\\=&\sum_{i=1}^\infty \frac{1}{\mu(B_i)}\int_\Omega \sum_{r=1}^\infty  x_{rk}^* \left(\int_{B_i}f(u)\, d\mu(u)\right) \chi_{B_r}(t)\chi_{B_i}(t)\, d \mu(t)
\\=&
\sum_{i=1}^\infty \frac{1}{\mu(B_i)}\int_\Omega x_{ik}^*\left(\int_{B_i}f(u)\, d\mu(u)\right)\chi_{B_i}(t)\, d \mu(t)
\\=&\sum_{i=1}^\infty  x_{ik}^*\left(\int_{B_i}f(u) d\mu(u)\right)=\sum_{i=1}^\infty  \int_\Omega x_i^*( f(u)) \chi_{B_i}(u)\,d\mu(u).
\end{align*}
Thus, for $1\leq p<\infty$,
      $$\|P^*(g_k)-g_k\|_{p^*}\leq \|P^*(g_k)-P^*(\eta_k)\|_{p^*} +\|\eta_k-g_k\|_{p^*}\leq 2\|\eta_k-g_k\|_{p^*}< \e,$$ for every $k=1,\ldots, m$.

To prove (c), we follow the lines of (b). Given  $g_1,\ldots,  g_m$ in $L_\infty(\mu, X)$ and $\e>0$, for each $k\in \{1,\ldots,m\}$ there exists a measurable, bounded  and countably valued mapping $\eta_k:\Omega \longrightarrow X$ such that
$$
\|g_k-\eta_k\|_\infty<\e,
$$
and hence, we have a countable partition  $\mathcal{C}_k$ of $\Omega$ and points  $x_C\in X$ for every $C\in \mathcal{C}_k$ such that $\eta_k(t)=\sum_{C \in \mathcal{C}_k}x_C\chi_C(t)$, for every $t \in \Omega$.
Again, we take an infinite countable partition $\mathcal{B}=\{B_i\,:\, i=\in \N\}$ of $\Omega$ of sets of positive measure that is a refinement of $\mathcal{A}$ and $\mathcal{C}_k$ for all $k$. Finally, if we  define  $P:L_\infty(\mu, X)\longrightarrow L_\infty(\mu, X)$ by
$$P(f)=\sum_{i=1}^{\infty}\frac{1}{\mu(B_i)}\Big(\int_{B_i}f d\mu\Big)\, \chi_{B_i},$$
we have that  $P$ is a norm-one projection such that $P(L_\infty(\mu, X))$ is isometrically isomorphic to $\ell_\infty(X)$ and
$$\|P(g_k)-g_k\|_\infty=\|\eta_k-g_k\|_\infty<\e,$$ for every $k=1,\ldots, m$.
\end{proof}

\begin{proof}[Proof of Theorem~\ref{thr-L1muX}]
If $L_1(\mu)$ is finite-dimensional, the result is a consequence of Lemma~\ref{lemma-characterizationell1X-Y}. So, let us suppose in the rest of the proof that $L_1(\mu)$ is infinite-dimensional.

Let us start with the case when $\mu$ is finite. As $X^*$ has the Radon-Nikod\'{y}m property, we have that $L_\infty(\mu,X^*)= L_1(\mu,X)^*$ (see e.g.\ \cite[Theorem~IV.1.1 in p.~98]{Diestel-Uhl-book}), so Lemma~\ref{lemma-P-P*-when-mu-finite}.(b) provides us with a net $\{P_\lambda\}_{\lambda\in \Lambda}$ of norm-one projections on $L_1(\mu,X)$ such that $\{P_\lambda f\}\longrightarrow f$ in norm for every $f\in L_1(\mu,X)$, $\{P_\lambda^* g\}\longrightarrow g$ in norm for every $g\in L_\infty(\mu,X^*)=L_1(\mu,X)^*$, and $P_\lambda(L_1(\mu,X))$ is isometrically isomorphic to $\ell_1(X)$. Now, we may apply Proposition~\ref{Prop-family-projections}.

If $\mu$ is $\sigma$-finite, we may use \cite[Proposition~1.6.1]{Cembranos} to reduce to the previous case: there is a finite measure $\nu$ such that $L_1(\mu,X)$ is isometrically isomorphic to $L_1(\nu,X)$. Let us also observe that we actually get that there exists a common function $\eta:\R^+\longrightarrow \R^+$, depending only on $X$ and $Y$, such that all the spaces $(L_1(\mu,X),Y)$ have the BPBp for compact operators with the function $\eta$ when $\mu$ is $\sigma$-finite.

Finally, for the general case, we may adapt an argument from the proof of \cite[Proposition~2.1]{ChoiKimLeeMartin}. Let $0<\eps<1$ and $T\in \mathcal{L}(L_1(\mu,X),Y)$ with $\|T\|=1$ and $f_0\in S_{L_1(\mu,X)}$ satisfying
$$
\|T f_0\|> 1- \eta(\eps),
$$
where $\eta$ is the universal function for all $\sigma$-finite measures given in the previous case. Pick a sequence $\{f_n\}_{n\in\N}$ in the unit sphere of $L_1(\mu,X)$ such that $\lim_{n\to \infty}\|T f_n\|=1$. Then, there is a measurable set $A$ such that the measure $\mu|_A$ is $\sigma$-finite and the support of all the $f_n$, $n\geq 0$, are contained in $A$. Then, consider $T_1$ to be the restriction of $T$ to $L_1(\mu|_A,X)$, which satisfies $\|T_1\|=1$ and $\|T_1 f_0\|>1-\eta(\eps)$. By the assumption on $\eta$, there exist a norm-one operator $S_1:L_1(\mu|_A,X)\longrightarrow Y$ and a norm-one vector $g\in L_1(\mu|_A,X)$ such that $\|S_1 g\|=1$, $\|T_1-S_1\|<\eps$ and $\|f_0-g\|<\eps$. Let $P:L_1(\mu,X)\longrightarrow L_1(\mu|_A,X)$ denote the restriction operator. Then $S=S_1P+T(\Id-P)$ is a norm-one operator from $L_1(\mu,X)$ to $Y$, $g$ can be viewed as a norm-one element in $L_1(\mu,X)$ (just extending by $0$), $\|Sg\|=1$, $\|S-T\|<\eps$ and $\|f_0-g\|<\eps$.
\end{proof}

When $X$ is just the base field, we recover the result for pairs of the form $(L_1(\mu),Y)$ from \cite[Corollary~2.4]{ABGKM-JMAA2014}, see Examples~\ref{example-previouslyknown}.(i).

Concrete applications of Theorem~\ref{thr-L1muX} for the vector-valued case can be given using the results of \cite{KimLeeMartin-JMAA2015}.

\begin{cor}
Let $\mu$ be a positive measure and let $X$, $Y$ be Banach spaces. The pair $(L_1(\mu,X),Y)$ has the BPBp for compact operators in the following cases:
\begin{enumerate}
\item[(a)] if $X$ and $Y$ are finite-dimensional;
\item[(b)] if $X^*$ has the Radon-Nikod\'{y}m property, $Y$ is a Hilbert space and the pair $(X,Y)$ has the BPBp for compact operators;
\item[(c)] in particular, if $Y$ is a Hilbert space and $X=c_0$ or $X=L_p(\nu)$ for any positive measure $\nu$ and $1< p < \infty$.
\end{enumerate}
\end{cor}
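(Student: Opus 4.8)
The plan is to derive all three items from Theorem~\ref{thr-L1muX}, which guarantees that $(L_1(\mu,X),Y)$ has the BPBp for compact operators as soon as two conditions hold: that $X^*$ has the Radon-Nikod\'ym property, and that $(\ell_1(X),Y)$ has the BPBp for compact operators. Accordingly, in each case I would verify these two hypotheses separately, using Lemma~\ref{lemma-characterizationell1X-Y} (which reduces the second hypothesis to a statement about $(\ell_1(X),Y)$ and, when $\mathcal{K}(X,Y)=\mathcal{L}(X,Y)$, to the plain BPBp of $(\ell_1(X),Y)$) together with the results of \cite{KimLeeMartin-JMAA2015}.

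For (a), since $X$ is finite-dimensional, $X^*$ is finite-dimensional and hence has the Radon-Nikod\'ym property, so the first hypothesis is automatic. Moreover $\mathcal{K}(X,Y)=\mathcal{L}(X,Y)$ because both spaces are finite-dimensional, so by the ``moreover'' part of Lemma~\ref{lemma-characterizationell1X-Y} it suffices to show that $(\ell_1(X),Y)$ has the (plain) BPBp; this is exactly the statement for finite-dimensional $X$ and $Y$ proved in \cite{KimLeeMartin-JMAA2015}, which I would quote directly, and then Theorem~\ref{thr-L1muX} applies. For (b) the condition on $X^*$ is given by hypothesis, and for the second condition I would invoke the transfer result of \cite{KimLeeMartin-JMAA2015} asserting that, when $Y$ is a Hilbert space, the BPBp of $(X,Y)$ passes to $(\ell_1(X),Y)$; here I would check, as in the proof of Lemma~\ref{lemma-characterizationell1X-Y}, that the operators constructed in that argument remain compact whenever one starts from compact data, so that the conclusion holds in the compact-operator setting. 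Theorem~\ref{thr-L1muX} then yields the result.

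Finally, (c) is a specialization of (b), so I would only have to check its two hypotheses in the two concrete situations. When $X=c_0$, we have $X^*=\ell_1$, which has the Radon-Nikod\'ym property, and $(c_0,Y)$ has the BPBp for compact operators for $Y$ uniformly convex by Examples~\ref{example-previouslyknown}.(g), observing that $c_0=C_0(\N)$; when $X=L_p(\nu)$ with $1<p<\infty$, the space $X$ is reflexive, so $X^*$ has the Radon-Nikod\'ym property, and $X$ is uniformly convex, so $(X,Y)$ has the BPBp for compact operators for every $Y$ by Examples~\ref{example-previouslyknown}.(b). Since every Hilbert space is uniformly convex, (b) applies in both cases.

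I expect the only real obstacle to be the verification in (b) that the Hilbert-range transfer theorem of \cite{KimLeeMartin-JMAA2015} survives the passage to compact operators; this is the one point where genuine (if routine) checking is needed, since for a general domain $X$ and a Hilbert range one does not have $\mathcal{K}(X,Y)=\mathcal{L}(X,Y)$. Everything else amounts to recognizing the Radon-Nikod\'ym property in the relevant duals and locating the appropriate already-established BPBp statements in Examples~\ref{example-previouslyknown}.
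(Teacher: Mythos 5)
Your proposal is correct and follows essentially the same route as the paper: all three items are reduced to Theorem~\ref{thr-L1muX} via Lemma~\ref{lemma-characterizationell1X-Y}, with (a) using the finite-dimensional result of \cite{KimLeeMartin-JMAA2015} together with $\mathcal{K}(X,Y)=\mathcal{L}(X,Y)$, (b) adapting the Hilbert-range transfer argument of \cite{KimLeeMartin-JMAA2015} to compact operators (the paper likewise singles this out as the one point needing checking, reducing to finite convex sums so that the constructed operators stay compact), and (c) specializing (b) using Examples~\ref{example-previouslyknown}.
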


\begin{proof} (a). When $X$ and $Y$ are finite-dimensional, it is shown in \cite[Proposition~7]{KimLeeMartin-JMAA2015} that the pair $(\ell_1(X),Y)$ has the BPBp. By finite-dimensionality, Lemma~\ref{lemma-characterizationell1X-Y} gives then that the pair $(\ell_1(X),Y)$ has the BPBp for compact operators. Now, Theorem~\ref{thr-L1muX} applies as $X$ is Asplund.

(b). If we only consider finite convex sums instead of convex series, we may repeat the proof of \cite[Proposition~9]{KimLeeMartin-JMAA2015} but using only compact operators to get item (i) of Lemma~\ref{lemma-characterizationell1X-Y}. Then, we have that $(\ell_1(X),Y)$ has the BPBp for compact operators. If $X^*$ has the Radon-Nikod\'{y}m property, Theorem~\ref{thr-L1muX} finishes the proof.

(c) follows from (b) and Examples~\ref{example-previouslyknown}.
\end{proof}

The proof of Theorem~\ref{thr-L1muX} can be easily adapted to pairs of the form $(L_p(\mu,X),Y)$ for $1<p<\infty$, but only when the measure $\mu$ satisfies that $L_1(\mu)$ is infinite-dimensional.

\begin{prop}\label{prop-LpmuX} Let $1<p<\infty$, let $\mu$ be a positive measure such that $L_1(\mu)$ is infinite-dimensional, let $X$ be a Banach space such that $X^*$ has the Radon-Nikod\'{y}m property, and let $Y$ be a Banach space. If the pair $(\ell_p(X),Y)$ has the BPBp for compact operators, then so does the pair $(L_p(\mu,X),Y)$.
\end{prop}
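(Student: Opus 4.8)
The plan is to follow closely the three-stage structure of the proof of Theorem~\ref{thr-L1muX} (finite measure, then $\sigma$-finite, then general), replacing $L_1$ by $L_p$ throughout and invoking Lemma~\ref{lemma-P-P*-when-mu-finite}, which is already stated for every $1\le p<\infty$, in place of its $p=1$ instance. Because the hypothesis excludes the case in which $L_1(\mu)$ is finite-dimensional, I never have to relate $\ell_p^m(X)$ to $\ell_p(X)$; this is fortunate, since the summand Lemma~\ref{lemma-X-Y_1-p-Y_2} is available only for $\oplus_1$ and $\oplus_\infty$, not for $\oplus_p$, and this is exactly why the statement requires $L_1(\mu)$ to be infinite-dimensional.

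First I would treat a finite measure $\mu$. Since $X^*$ has the Radon--Nikod\'ym property, the duality $L_p(\mu,X)^*=L_{p^*}(\mu,X^*)$ holds (see \cite[Theorem~IV.1.1]{Diestel-Uhl-book}), so Lemma~\ref{lemma-P-P*-when-mu-finite}.(b) supplies a net $\{P_\lambda\}$ of norm-one projections on $L_p(\mu,X)$ with $P_\lambda f\to f$ for every $f$, $P_\lambda^* g\to g$ for every $g\in L_{p^*}(\mu,X^*)=L_p(\mu,X)^*$, and each $P_\lambda(L_p(\mu,X))$ isometrically isomorphic to $\ell_p(X)$. As $(\ell_p(X),Y)$ has the BPBp for compact operators with some function $\eta$, the same $\eta$ serves every pair $(P_\lambda(L_p(\mu,X)),Y)$, and Proposition~\ref{Prop-family-projections} yields the conclusion for finite $\mu$. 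For $\sigma$-finite $\mu$ I would reduce to the finite case: the Radon--Nikod\'ym rescaling $f\mapsto (d\mu/d\nu)^{1/p}f$ provides, for a suitable finite measure $\nu$, an isometric isomorphism $L_p(\mu,X)\cong L_p(\nu,X)$, and $L_1(\nu)$ stays infinite-dimensional. Just as in Theorem~\ref{thr-L1muX}, this yields a single function $\eta$, depending only on $p$, $X$ and $Y$, valid for all such $\sigma$-finite measures.

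The main obstacle is the passage to a general (not $\sigma$-finite) measure $\mu$, because the $\oplus_1$-gluing $S=S_1P+T(\Id-P)$ used for $p=1$ fails to be norm-one when $p>1$: splitting $h=h_1+h_2$ along $A$ and its complement, the bound $\|S_1h_1+Th_2\|\le\|h_1\|+\|h_2\|$ need not be $\le\|h\|$ in the $\oplus_p$-norm once $p>1$. Here I would instead exploit that $p>1$ forces $p^*<\infty$: since $T$ is compact, so is $T^*\colon Y^*\longrightarrow L_p(\mu,X)^*=L_{p^*}(\mu,X^*)$, hence $T^*(B_{Y^*})$ is separable; as every element of $L_{p^*}(\mu,X^*)$ has $\sigma$-finite support, a countable dense subset determines a single $\sigma$-finite set $A$ containing the supports of all $T^*g^*$. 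I would then enlarge $A$ so that it also contains $\supp(f_0)$ and so that $L_1(\mu|_A)$ is infinite-dimensional (possible because $L_1(\mu)$ is, using a countable disjoint family of sets of positive finite measure). Writing $P$ for the norm-one restriction projection onto $L_p(\mu|_A,X)$, one checks $T=TP$ (for $h$ and $g^*$, the pairing $\langle Th-TPh,g^*\rangle=\int\langle(\Id-P)h,T^*g^*\rangle\,d\mu$ vanishes since $(\Id-P)h$ lives on $A^c$ and $T^*g^*$ on $A$); consequently $T_1:=T|_{L_p(\mu|_A,X)}$ has $\|T_1\|=1$ and $\|T_1f_0\|=\|Tf_0\|>1-\eta(\eps)$.

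Finally I would apply the $\sigma$-finite case to $T_1$ on $L_p(\mu|_A,X)$, obtaining a norm-one compact $S_1$ and a norm-one $g$ with $\|S_1g\|=1$, $\|S_1-T_1\|<\eps$ and $\|f_0-g\|<\eps$. Setting $S=S_1P$, and viewing $g$ as an element of $L_p(\mu,X)$ by extending it by zero, gives a compact operator with $\|S\|=1=\|Sg\|$, while $\|f_0-g\|<\eps$ and, using $T=TP$, $\|S-T\|=\|(S_1-T_1)P\|<\eps$. This replaces the problematic gluing by a clean composition and completes the argument.
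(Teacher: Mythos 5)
Your proposal is correct, and for the finite and $\sigma$-finite cases it is exactly the adaptation of Theorem~\ref{thr-L1muX} that the paper has in mind (the paper's ``proof'' of Proposition~\ref{prop-LpmuX} consists only of the remark that Theorem~\ref{thr-L1muX} ``can be easily adapted''). Where you genuinely depart from the paper is the passage to a general measure, and your departure is not optional: you are right that the gluing $S=S_1P+T(\Id-P)$ used for $p=1$ exploits that $L_1(\mu,X)=L_1(\mu|_A,X)\oplus_1 L_1(\mu|_{A^c},X)$ is an $\ell_1$-decomposition, whereas for $p>1$ the same formula can have norm as large as $2^{1/p^*}$ (already when $S_1$ and $T(\Id-P)$ are norm-one functionals), so the literal adaptation breaks down at precisely this step. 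Your replacement --- $T^*$ is compact, hence $T^*(B_{Y^*})$ is a separable subset of $L_{p^*}(\mu,X^*)$ whose elements have $\sigma$-finite support because $p^*<\infty$, so $T=TP$ for the restriction projection $P$ onto a suitable $\sigma$-finite set $A$, and then $S=S_1P$ --- is clean and correct, and it is genuinely a $p>1$ argument: for $p=1$ one has $p^*=\infty$ and elements of $L_\infty(\mu,X^*)$ need not have $\sigma$-finite support, which is why the paper resorts to the $\oplus_1$-gluing there. The one point worth making explicit is your use of the identification $L_p(\mu,X)^*=L_{p^*}(\mu,X^*)$ for a not necessarily $\sigma$-finite $\mu$; this is standard for $1<p<\infty$ when $X^*$ has the Radon--Nikod\'ym property, since every functional on $L_p(\mu,X)$ is already determined on a $\sigma$-finite set, but the cited \cite[Theorem~IV.1.1]{Diestel-Uhl-book} is stated for finite measures, so a sentence of justification would not hurt.
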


Let us observe that, in this case, the scalar-valued version of the result has no interest, as the spaces $L_p(\mu)$ are uniformly convex for $1<p<\infty$ and we may use Examples~\ref{example-previouslyknown}.(b).

The last applications deal with modifying the range space.

\begin{theorem}\label{theorem:x-lp-mu-Y}
Let $X$, $Y$ be Banach spaces.
\begin{enumerate}
\item[(a)] For $1\leq p <\infty$, if  the pair $(X,\ell_p(Y))$ has the BPBp for compact operators, then so does $(X,L_p(\mu,Y))$ for every positive measure $\mu$ such that $L_1(\mu)$ is infinite-dimensional.
\item[(b)] If the pair $(X,Y)$ has the BPBp for compact operators, then so does $(X,L_\infty(\mu,Y))$ for every $\sigma$-finite positive measure $\mu$.
\item[(c)] If the pair $(X,Y)$ has the BPBp for compact operators, then so does $(X,C(K,Y))$ for every compact Hausdorff topological space $K$.
\end{enumerate}
\end{theorem}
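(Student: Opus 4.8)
The plan is to treat all three parts as range-space modifications, using Proposition~\ref{prop-range-space} as the main tool for (a) and (b) and a direct Urysohn-type argument for (c). In each case the strategy is the same: approximate a given compact $T$ by an operator whose range lies inside a subspace isometric to the ``model'' sequence space for which the BPBp for compact operators is assumed, and then transfer the property to the whole range.

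For (a) I would apply Lemma~\ref{lemma-P-P*-when-mu-finite}.(a) with the role of the underlying space played by the range $Y$: for each finite set $\mathcal{F}\subset L_p(\mu,Y)$ and each $\eps>0$ it produces a norm-one projection $P$ on $L_p(\mu,Y)$ whose range is isometric to $\ell_p(Y)$ and with $\|Pf-f\|<\eps$ for $f\in\mathcal{F}$. Directing the pairs $(\mathcal{F},\eps)$ by inclusion of $\mathcal{F}$ and decrease of $\eps$ yields a net $\{Q_\lambda\}$ of norm-one projections with $Q_\lambda f\to f$ in norm for every $f$ and with each $Q_\lambda(L_p(\mu,Y))$ isometric to $\ell_p(Y)$. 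Since $(X,\ell_p(Y))$ has the BPBp for compact operators with some function $\eta$, all the pairs $(X,Q_\lambda(L_p(\mu,Y)))$ have it with this \emph{same} $\eta$, and Proposition~\ref{prop-range-space} closes the argument. The infinite-dimensionality of $L_1(\mu)$ is exactly what Lemma~\ref{lemma-P-P*-when-mu-finite}.(a) requires.

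For (b) the same scheme works once I establish the base case that, if $(X,Y)$ has the BPBp for compact operators with $\eta$, then so does $(X,\ell_\infty(Y))$ with the same $\eta$. Given compact $T\colon X\to\ell_\infty(Y)$ with $\|T\|\le 1$ and $x_0\in B_X$ with $\|Tx_0\|>1-\eta$, some coordinate operator $T_{i_0}=\pi_{i_0}T$ (compact, of norm $\le 1$) satisfies $\|T_{i_0}x_0\|>1-\eta$; applying the BPBp for $(X,Y)$ through Remark~\ref{BPBpLemma}.(b) and re-inserting the resulting $S_{i_0}$ into the $i_0$-th coordinate yields a compact $S$ that leaves every other coordinate of $T$ untouched, so $\|S\|=\|Sx_1\|=1$, $\|S-T\|=\|S_{i_0}-T_{i_0}\|<\eps$, and compactness is preserved. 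With this base case in hand, for finite $\mu$ I would feed the projections of Lemma~\ref{lemma-P-P*-when-mu-finite}.(c), whose ranges are isometric to $\ell_\infty(Y)$, into Proposition~\ref{prop-range-space}; the $\sigma$-finite case reduces to the finite one because $L_\infty(\mu,Y)=L_\infty(\nu,Y)$ isometrically for an equivalent finite measure $\nu$ with the same null sets.

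Part (c) I would prove directly, without projections, since a general compact $K$ carries too few clopen sets to build projections onto copies of $\ell_\infty^m(Y)$ converging to the identity. Given compact $T\colon X\to C(K,Y)$ with $\|T\|=1$, $x_0\in S_X$ and $\|Tx_0\|>1-\eta$, I would pick $t_0\in K$ with $\|(Tx_0)(t_0)\|>1-\eta$ and apply the BPBp for $(X,Y)$ to the evaluation operator $T_{t_0}=\delta_{t_0}T$, obtaining a compact $\widetilde S\colon X\to Y$ and $x_1\in S_X$. Then I would patch $T$ to $\widetilde S$ near $t_0$ by a Urysohn function $\psi$ with $\psi(t_0)=1$ and small support, setting $(Sx)(t)=(1-\psi(t))(Tx)(t)+\psi(t)\widetilde Sx$; since $(Sx_1)(t_0)=\widetilde S x_1$ and $\|S\|\le 1$ as a pointwise convex combination, this forces $\|S\|=\|Sx_1\|=1$, and $S$ is compact as the sum of a multiplication operator applied to $T$ and a factorization of $\widetilde S$ through $Y$. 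The crux, and the main obstacle, is controlling $\|S-T\|$ uniformly over $B_X$: this is precisely where compactness of $T$ enters, because relative compactness of $T(B_X)$ in $C(K,Y)$ forces equicontinuity (Arzel\`a--Ascoli), so choosing $\supp\psi$ inside a small enough neighborhood of $t_0$ makes $\|(Tx)(t)-(Tx)(t_0)\|$ uniformly small there, whence $\|S-T\|$ is dominated by $\eps$ plus this oscillation.
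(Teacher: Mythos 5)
Your proposal is correct. For parts (a) and (b) you follow essentially the paper's own route: Lemma~\ref{lemma-P-P*-when-mu-finite} supplies the net of norm-one projections with ranges isometric to $\ell_p(Y)$ (resp.\ $\ell_\infty(Y)$), and Proposition~\ref{prop-range-space} transfers the property; your direct coordinate-wise argument for the base case $(X,\ell_\infty(Y))$ is a sound substitute for the paper's appeal to Lemma~\ref{lemma-ell-p-co-range}, and the only small point to add in (b) is the case where $L_\infty(\mu)$ is finite-dimensional (so that Lemma~\ref{lemma-P-P*-when-mu-finite} does not apply), where $L_\infty(\mu,Y)$ is isometric to $\ell_\infty^m(Y)$ and your base-case argument applies verbatim. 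Part (c) is where you genuinely diverge. The paper does build projections: following \cite[Theorem~4]{WBJohnson} it uses peak partitions of unity (continuous functions $\varphi_k$ with $\varphi_j(t_k)=\delta_{jk}$, not clopen sets) to produce a net of norm-one projections of $C(K,Y)$ onto subspaces isometric to $\ell_\infty^m(Y)$ converging pointwise to the identity, and then invokes Lemma~\ref{lemma-ell-p-co-range} and Proposition~\ref{prop-range-space}; so your stated reason for avoiding that route is a misconception, but your alternative is nevertheless correct and arguably more elementary. The one-point patch $(Sx)(t)=(1-\psi(t))(Tx)(t)+\psi(t)\widetilde Sx$ has norm at most one as a pointwise convex combination and attains it at $x_1$ through $t_0$; it is compact as the sum of a multiplication operator composed with $T$ and an operator factoring through $Y$; and the estimate $\|S-T\|\le\|\widetilde S-\delta_{t_0}T\|+\sup\{\|(Tx)(t)-(Tx)(t_0)\|\,:\,x\in B_X,\ t\in\supp\psi\}$ is exactly where the equicontinuity of the relatively compact set $T(B_X)$ enters, as you say. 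What your argument buys is independence from the vector-valued extension of Johnson's $\pi_\lambda$-space construction and a completely self-contained proof localized at a single point of $K$; what the paper's argument buys is uniformity (a single function $\eta$ for all the pairs $(X,Q_\lambda(C(K,Y)))$) and the reuse of the general machinery of Proposition~\ref{prop-range-space} already set up for parts (a) and (b).
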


We need the following result which reminds Lemma \ref{lemma-c0Y}, but for range spaces.

\begin{lemma}\label{lemma-ell-p-co-range}
Let $X$, $Y$ be Banach spaces and let $\eta:\R^+\longrightarrow \R^+$ be a function. The following are equivalent:
    \begin{enumerate}
    \item[(i)] the pair $(X,Y)$ has the BPBp for compact operators with the function $\eta$,
    \item[(ii)] the pairs $(X,\ell_\infty^m(Y))$ with $m\in \N$ have the BPBp for compact operators with the function $\eta$,
    \item[(iii)] the pair $(X,c_0(Y))$ has the BPBp for compact operators with the function $\eta$,
    \item[(iv)] the pair $(X,\ell_\infty(Y))$ has the BPBp for compact operators with the function $\eta$.
        \end{enumerate}
\end{lemma}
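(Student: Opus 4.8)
The plan is to route all four equivalences through condition (i), proving (i)$\Rightarrow$(ii), (i)$\Rightarrow$(iii) and (i)$\Rightarrow$(iv) by one common coordinate-replacement argument, and the three reverse implications by the same summand-splitting trick; note that (ii)$\Rightarrow$(i) is anyway the special case $m=1$ since $\ell_\infty^1(Y)=Y$.

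First I would dispatch the implications towards (i), which is the side that preserves $\eta$ effortlessly. In each target space the copy of $Y$ given by the first coordinate is an $\ell_\infty$-summand with complement of the same type, namely $\ell_\infty^m(Y)=Y\oplus_\infty \ell_\infty^{m-1}(Y)$, $c_0(Y)=Y\oplus_\infty c_0(Y)$ and $\ell_\infty(Y)=Y\oplus_\infty \ell_\infty(Y)$. Hence Lemma~\ref{lemma-X-Y_1-p-Y_2}.(b) applies verbatim and shows that each of (ii), (iii), (iv) with the function $\eta$ forces (i) with the \emph{same} $\eta$.

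For the forward implications I would use a single argument, uniform over the three targets $E(Y)\in\{\ell_\infty^m(Y),\,c_0(Y),\,\ell_\infty(Y)\}$. Given a compact $T\colon X\to E(Y)$ with $\|T\|=1$ and $x_0\in S_X$ satisfying $\|Tx_0\|>1-\eta(\e)$, write $T_i=\pi_i\circ T$ for the coordinate operators; each is compact with $\|T_i\|\le 1$. Since $\|Tx_0\|=\sup_i\|T_ix_0\|>1-\eta(\e)$, I can fix $i_0$ with $\|T_{i_0}x_0\|>1-\eta(\e)$, feed $T_{i_0}$ into the BPBp for compact operators of $(X,Y)$, and obtain a norm-one $S_{i_0}\colon X\to Y$ together with $x_1\in S_X$ such that $\|S_{i_0}x_1\|=1$, $\|x_0-x_1\|<\e$ and $\|S_{i_0}-T_{i_0}\|<\e$. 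Replacing the $i_0$-th coordinate of $T$ by $S_{i_0}$ and leaving the rest untouched gives the candidate $S$. I would then verify that $S$ still takes values in $E(Y)$ (for $c_0(Y)$ this uses that only one fixed coordinate is altered, so tails still vanish), that $S-T$ is supported in a single coordinate and hence compact, that $\|S\|=1$ and $\|Sx_1\|=1$ because the $i_0$-th coordinate has norm one while every other has norm at most one, and finally that $\|S-T\|=\|S_{i_0}-T_{i_0}\|<\e$.

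The step I expect to be the genuine obstacle is the \emph{exact} preservation of $\eta$ in this forward argument, because the almost-norming coordinate $T_{i_0}$ need not have norm one: the supremum $\sup_i\|T_i\|=1$ may fail to be attained by the coordinate that nearly norms $x_0$. To feed $T_{i_0}$ into the property of $(X,Y)$ one must invoke the reformulation of Remark~\ref{BPBpLemma} for operators of norm at most one, and the delicate point is to arrange the estimates so that the rescaling needed to normalise $T_{i_0}$ does not inflate the final bound $\|S-T\|<\e$ past the prescribed tolerance. This bookkeeping, rather than any conceptual difficulty, is the crux; once the coordinate picture is in place, the checks on compactness, on landing in the correct sum, and on the norm identities are all routine.
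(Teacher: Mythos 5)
Your proposal is correct and takes essentially the same route as the paper: the reverse implications are exactly the paper's appeal to Lemma~\ref{lemma-X-Y_1-p-Y_2}.(b), and your coordinate-replacement argument for the forward implications is precisely the proof of \cite[Proposition~2.4]{ACKLM} that the paper adapts to compact operators. The subtlety you flag about feeding the non-normalized coordinate $T_{i_0}$ into the BPBp of $(X,Y)$ is genuine, but it is the same one the paper inherits from the cited proposition and is handled by the rescaling implicit in Remark~\ref{BPBpLemma}, so it does not distinguish your argument from the paper's.
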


\begin{proof}
(i) implies (ii), (i) implies (iii), and (i) implies (iv) can be proved adapting the proof of \cite[Proposition~2.4]{ACKLM} to compact operators. Finally, the fact that any of the assertions (ii), (iii) or (iv) implies (i) is a consequence of Lemma~\ref{lemma-X-Y_1-p-Y_2}.(b).
\end{proof}

We are now ready to present the proof of the theorem.

\begin{proof}[Proof of Theorem~\ref{theorem:x-lp-mu-Y}]
(a). Fix $1\leq p< \infty$. If $L_1(\mu)$ is infinite-dimensional, Lemma~\ref{lemma-P-P*-when-mu-finite}.(a) provides a net $\{Q_\lambda\}_{\lambda \in \Lambda}$ of norm-one projections on $L_p(\mu,X)$ such that $\{Q_\lambda f\} \longrightarrow f$ in norm for every $f \in L_p(\mu,Y)$ and $Q_\lambda(L_p(\mu,X))$ is isometrically isomorphic to $\ell_p(Y)$. Now, the result follows from Proposition~\ref{prop-range-space}.

(b). If $L_\infty(\mu)$ is finite-dimensional, the result is a consequence of Lemma~\ref{lemma-ell-p-co-range}. Otherwise, if $L_\infty(\mu)$ is infinite-dimensional, we may and do suppose that the measure is finite by using \cite[Proposition~1.6.1]{Cembranos}.
Lemma~\ref{lemma-P-P*-when-mu-finite}.(c) provides a net $\{Q_\lambda\}_{\lambda \in \Lambda}$ of norm-one projections on $L_\infty(\mu,X)$ such that $\{Q_\lambda f\} \longrightarrow f$ in norm for every $f \in L_\infty(\mu,Y)$ and $Q_\lambda(L_\infty(\mu,X))$ is isometrically isomorphic to $\ell_\infty(Y)$. Now, Lemma~\ref{lemma-ell-p-co-range} gives that all the pairs $(X,Q_\lambda(L_\infty(\mu,X)))$ have the BPBp for compact operators with the same function, and so the result follows from Proposition~\ref{prop-range-space}.

(c). Following step-by-step the proof of \cite[Theorem~4]{WBJohnson}, by using peak partitions of the unit and extending the scalar-valued case to the vector-valued case, we may find a net $\{Q_\lambda\}_{\lambda\in \Lambda}$ of norm-one projections on $C(K,Y)$ such that $\{Q_\lambda f\} \longrightarrow f$ in norm for every $f \in C(K,Y)$ and $Q_\lambda(C(K,Y))$ is isometrically isomorphic to $\ell_\infty^m(Y)$. Now, the result follows from Lemma~\ref{lemma-ell-p-co-range} and Proposition~\ref{prop-range-space}.
\end{proof}

Some consequences of Theorem~\ref{theorem:x-lp-mu-Y} are the following.

\begin{cor}
Let $X$, $Y$ be Banach spaces, let $K$ be a compact Hausdorff topological space, let $\mu$ be a positive measure and let $\nu$ be a $\sigma$-finite positive measure.
\begin{enumerate}
\item[(a)] If $Y$ has property $\beta$, then $(X,L_\infty(\mu,Y))$ and $(X,C(K,Y))$ have the BPBp for compact operators.
\item[(b)] If $Y$ has the AHSP, then so do $L_\infty(\nu,Y)$ and $C(K,Y)$.
\item[(c)] For $1\leq p<\infty$, if $\ell_p(Y)$ has the AHSP and $L_1(\mu)$ is infinite-dimensional, then so does $L_p(\mu,Y)$.
\end{enumerate}
\end{cor}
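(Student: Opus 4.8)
The plan is to obtain all three items by feeding the hypotheses into Theorem~\ref{theorem:x-lp-mu-Y} and reading off its conclusions, using as a translation device Corollary~\ref{cor:characAHSP}, whose content I will invoke repeatedly in the equivalent form: \emph{a Banach space $Z$ has the AHSP if and only if the pair $(\ell_1,Z)$ has the BPBp for compact operators.} In this way each ``$Z$ has the AHSP'' hypothesis becomes a ``$(\ell_1,Z)$ has the BPBp for compact operators'' input for Theorem~\ref{theorem:x-lp-mu-Y}, and conversely each output of that theorem of the form ``$(\ell_1,W)$ has the BPBp for compact operators'' is converted back into ``$W$ has the AHSP.''

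For item (a) I would not use the AHSP dictionary at all. Since $Y$ has property $\beta$ and $X$ is arbitrary, Examples~\ref{example-previouslyknown}.(a) already yields that $(X,Y)$ has the BPBp for compact operators. Plugging this into Theorem~\ref{theorem:x-lp-mu-Y}.(c) gives at once that $(X,C(K,Y))$ has the BPBp for compact operators. For the space $L_\infty(\mu,Y)$, Theorem~\ref{theorem:x-lp-mu-Y}.(b) covers every $\sigma$-finite measure; the remaining point, namely passing from $\sigma$-finite to an arbitrary positive $\mu$, is the only genuine difficulty and is discussed at the end.

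For item (b) I would apply the dictionary twice. Because $Y$ has the AHSP, $(\ell_1,Y)$ has the BPBp for compact operators; taking $X=\ell_1$ in Theorem~\ref{theorem:x-lp-mu-Y}.(b) and (c) then shows that $(\ell_1,L_\infty(\nu,Y))$ (here $\nu$ is $\sigma$-finite, so Theorem~\ref{theorem:x-lp-mu-Y}.(b) applies verbatim) and $(\ell_1,C(K,Y))$ have the BPBp for compact operators, and translating back gives that $L_\infty(\nu,Y)$ and $C(K,Y)$ have the AHSP. Item (c) is identical in spirit: $\ell_p(Y)$ having the AHSP means $(\ell_1,\ell_p(Y))$ has the BPBp for compact operators, Theorem~\ref{theorem:x-lp-mu-Y}.(a) (applicable since $L_1(\mu)$ is infinite-dimensional) upgrades this to $(\ell_1,L_p(\mu,Y))$, and a last use of the dictionary gives that $L_p(\mu,Y)$ has the AHSP.

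The hard part will be the generality of $\mu$ in the $L_\infty$ assertion of (a): Theorem~\ref{theorem:x-lp-mu-Y}.(b) and the projections supplied by Lemma~\ref{lemma-P-P*-when-mu-finite}.(c) are only available for (essentially) $\sigma$-finite measures, while $L_\infty(\mu,Y)$ need not even have property $\beta$ for a non-$\sigma$-finite non-atomic $\mu$. I would try to localize a compact $T\in\mathcal{K}(X,L_\infty(\mu,Y))$ to a $\sigma$-finite part: relative compactness of $T(B_X)$ makes the range separable, and one would like to produce a $\sigma$-finite measurable set $E$ with $\|T-M_E T\|$ small, where $M_E$ is multiplication by $\chi_E$, thereby reducing to the $\sigma$-finite case already handled and gluing the perturbation back on the complement. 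The subtle point is that elements of $L_\infty(\mu,Y)$ may have non-$\sigma$-finite essential support, so this localization is not automatic; the cleanest way around it is to split $\mu$ into its atomic and non-atomic parts, observe that on the purely atomic part $L_\infty$ reduces to an $\ell_\infty(\Gamma,Y)$ which inherits property $\beta$ from $Y$ (so Examples~\ref{example-previouslyknown}.(a) applies there), and carry out the $\sigma$-finite localization only on the non-atomic part.
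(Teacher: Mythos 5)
Your derivation of all three items coincides with the paper's own proof: item (a) from Examples~\ref{example-previouslyknown}.(a) combined with Theorem~\ref{theorem:x-lp-mu-Y}, and items (b) and (c) by translating the AHSP hypotheses and conclusions through Corollary~\ref{cor:characAHSP} (AHSP for $Z$ $\Leftrightarrow$ $(\ell_1,Z)$ has the BPBp for compact operators) and then applying Theorem~\ref{theorem:x-lp-mu-Y}.(a)--(c). On this core the proposal is correct and identical in route to the paper.

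The one place you go beyond the paper is the worry about non-$\sigma$-finite $\mu$ in the $L_\infty(\mu,Y)$ assertion of (a). The observation itself is fair: Theorem~\ref{theorem:x-lp-mu-Y}.(b) is stated only for $\sigma$-finite measures, and the paper's proof of (a) cites nothing else, so the statement should really be read with the $\sigma$-finite measure $\nu$ in that assertion (as in item (b)); the paper does not supply, and does not claim to supply, an argument for general $\mu$. Your proposed repair, however, would not work. Localizing by multiplication with $\chi_E$ for a $\sigma$-finite set $E$ cannot make $\|T-M_ET\|$ small when the range of $T$ contains a function of non-$\sigma$-finite essential support --- already a rank-one operator $x\mapsto x^*(x)\mathbf{1}$, with $\mathbf{1}$ the constant function, satisfies $\|T-M_ET\|=\|x^*\|$ for every such $E$ --- and splitting off the purely atomic part does not remove this obstruction, since the non-atomic part of $\mu$ can itself fail to be $\sigma$-finite and still supports such constants. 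So either restrict the $L_\infty$ claim in (a) to $\sigma$-finite measures (which is all the paper's argument, and yours, actually proves), or a genuinely different idea is needed for general $\mu$; the sketch you give is not it.
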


\begin{proof}
(a) is a direct consequence of Example~\ref{example-previouslyknown}.(a) and Theorem~\ref{theorem:x-lp-mu-Y}. For (b) and (c), use Corollary~\ref{cor:characAHSP} that a Banach space $Z$ has the AHSP if and only if the pair $(\ell_1,Z)$ has the BPBp for compact operators. With this in mind, both affirmations are direct consequence of Theorem~\ref{theorem:x-lp-mu-Y}.
\end{proof}

\vspace*{0.2cm}

\noindent \textbf{Acknowledgment:\ }

The authors would like to thank Bill Johnson for kindly answering several inquiries.



\begin{thebibliography}{FaHaMo}

\bibitem{Acosta-RACSAM}
\textsc{M.~D.~Acosta},
Denseness of norm attaining mappings, \emph{Rev. R. Acad. Cien. Serie A. Mat.} {\bf 100} (2006), 9--30.

\bibitem{Acosta-BJMA2016}
\textsc{M.~D.~Acosta}, The Bishop-Phelps-Bollob\'{a}s property for operators on $C(K)$, \emph{Banach J. Math. Anal.} (to appear).

\bibitem{AAGM} \textsc{M.~D.~Acosta, R.~ M.~Aron, D.~Garc\'ia and M.~Maestre}, The Bishop-Phelps-Bollob\'as theorem for operators, \emph{J. Funct. Anal.} {\bf 294} (2008), 2780-2899.

\bibitem{8authors} \textsc{M.~D.~Acosta, J.~Becerra-Guerrero, Y.~S.~Choi, M.~Ciesielski, S.~K.~Kim, H.~J.~Lee, M.~L.~Louren\c{c}o, and M.~Mart\'in}, The Bishop-Phelps-Bollob\'as property for operators between spaces of continuous funtions, \emph{Nonlinear Anal.} {\bf 95} (2014), 323-332.

\bibitem{ABGM-unif-convex} \textsc{M.~D.~Acosta, J.~Becerra-Guerrero, D.~Garc\'ia, S.~K.~Kim and M.~Maestre},  The Bishop-Phelps-Bollob\'as Theorem for Bilinear Forms, Tran. Amer. Math. Soc., \textbf{11} (2013), 5911-5932.

\bibitem{ABGKM-JMAA2014} \textsc{M.~D.~Acosta, J.~Becerra-Guerrero, D.~Garc\'ia, S.~K.~Kim and M.~Maestre}, Bishop-Phelps-Bollob\'as property for certain spaces of operators, \emph{J. Math. Anal. Appl.} {\bf 414} (2014), 532-545.

\bibitem{AGKM-JMAA2016} \textsc{M.~D.~Acosta, D.~Garc\'ia, S.~K.~Kim and M.~Maestre}, Bishop-Phelps-Bollob\'as property for operators from $c_0$ into some Banach spaces, \emph{J. Math. Anal. Appl.} (2016), \texttt{http://dx.doi.org/10.1016/j.jmaa.2016.02.029}


\bibitem{ACKLM} \textsc{R.~M.~Aron, Y.~S.~Choi, S.~K.~Kim, H.~J.~Lee and M. Mart\'in}, The Bishop-Phelps-Bollob\'{a}s version of Lindenstrauss properties A and B, \emph{Trans. Amer. Math. Soc.} {\bf 367} (2015), 6085--6101.

\bibitem{BP} \textsc{E.~Bishop and R.~R.~Phelps}, A proof that every Banach space is reflexive, \emph{Bull. Amer. Math. Soc.} {\bf 67} (1961), 97-98.

\bibitem{Bol}
\textsc{B.~Bollob\'{a}s}, {An extension to the Theorem of Bishop and Phelps}, \emph{Bull. London Math. Soc.} {\bf 2} (1970), 181--182.

\bibitem{CKG} \textsc{B. Cascales, V. Kadets and J. Guirao}, A Bishop-Phelps-Bollob\'as type theorem for uniform algebras,\emph{ Adv. Math.} {\bf 240} (2013), 370-382.

\bibitem{Cembranos} \textsc{P.~Cembranos and J.~Mendoza}, \emph{Banach spaces of Vector-Valued
Functions}, Lecture Notes in Mathematics \textbf{1676}, Springer-Verlag,
Berlin, 1997.


\bibitem{ChoChoi} \textsc{D.~H.~Cho and Y.~S.~Choi}, The Bishop-Phelps-Bollob\'as theorem on bounded closed convex sets, \emph{J. London Math. Soc.} \textbf{93} (2016), 502-518.

\bibitem{ChoiKim} \textsc{Y.~S.~Choi and S.~K.~Kim}, The Bishop-Phelps-Bollob\'{a}s theorem for operators from $L_1(\mu)$ to Banach spaces with the Radon-Nikod\'{y}m property, \emph{J. Funct. Anal.} {\bf 261} (2011), 1446-1456.

\bibitem{ChoiKimLeeMartin} \textsc{Y.~S.~Choi, S.~K.~Kim, H.~J.~Lee and M. Mart\'in}, The Bishop-Phelps-Bollob\'{a}s theorem for operators on $\boldsymbol{L_1(\mu)}$, \emph{J. Funct. Anal.} {\bf 267}~(2014), no. 1, 214-242.

\bibitem{Diestel-Uhl-book} \textsc{J.~Diestel and J.~J.~Uhl},
\emph{Vector Measures}, Math. Surveys \textbf{15}, AMS, Providence
1977.

\bibitem{Gasparis} \textsc{I.~Gasparis}, On contractively complemented subspaces of separable $L_1$-preduals, {\it Israel J. Math.} {\bf 128} (2002), 77-92.

\bibitem{Johnson} \textsc{J.~Johnson and J.~Wolfe}, Norm attaining operators, {\it Studia Math.} {\bf 65} (1979), 7-19.

\bibitem{WBJohnson} \textsc{W.~B.~Johnson}, Finite-dimensional Schauder decompositions in $\pi_\lambda$ and dual $\pi_\lambda$ spaces, {\it Illinois J. Math.} {\bf 14} (1970), 642-647.

\bibitem{KL}
\textsc{S.~K.~Kim and H.~J.~Lee}, Uniform convexity and the Bishop-Phelps-Bollob\'as property, \emph{Canad. J. Math.} {\bf 66}, (2014), 373-386.

\bibitem{KimLee-JMAA2015}
\textsc{S.~K.~Kim and H.~J.~Lee}, The Bishop-Phelps-Bollob\'{a}s property for operators from $C(K)$ to uniformly convex spaces, \emph{J. Math. Anal. Appl.} {\bf 421}, (2015), 51-58.

\bibitem{KimLeeMartin-JMAA2015} \textsc{S.~K.~Kim, H.~J.~Lee and M.~Mart\'in}, The Bishop-Phelps-Bollob\'{a}s theorem for operators from $\ell_1$ sums, \emph{J. Math. Anal. Appl.} \textbf{428} (2015), 920-929.

\bibitem{L} \textsc{J.~Lindenstrauss}, On operators which attaing their norm, \emph{Isr. J. Math.} {\bf 1} (1963), 139-148.


\bibitem{M} \textsc{M.~Mart\'in}, Norm-attaining compact operators, \emph{J. Funct. Anal.} {\bf 267} (2014), 1585-1592.

\bibitem{M1} \textsc{M.~Mart\'in}, The Version for compact operators of Lindenstrauss properties $A$ and $B$, \emph{RACSAM} {\bf 110} (2016), 269–284.

\bibitem{Schachermayer} \textsc{W.~Schachermayer}, Norm attaining operators on some classical Banach spaces, \emph{Pac. J. Math.} \textbf{105} (1983), 427-438.

\end{thebibliography}
\end{document}